\documentclass[11pt]{article}
\usepackage[utf8]{inputenc}
\usepackage{amsmath}
\usepackage{enumitem}
\usepackage{amsfonts}
\usepackage{setspace}
\usepackage{amssymb}
\usepackage{cancel}  
\usepackage{graphicx}
\usepackage{mathrsfs}
\usepackage{esint}
\usepackage{upref,amsthm,amsxtra,exscale}
\usepackage{cite}
\usepackage{tikz-cd}
\usepackage[colorlinks=true,urlcolor=blue,
citecolor=red,linkcolor=blue,linktocpage,pdfpagelabels,
bookmarksnumbered,bookmarksopen]{hyperref}
\usepackage{cleveref}
\usepackage[cm]{fullpage}

\usepackage{subcaption}
\usepackage{caption}

\numberwithin{equation}{section}
\def\dis{ }

\newtheorem{theorem}{Theorem}[section]
\newtheorem{proposition}[theorem]{Proposition}
\newtheorem{lemma}[theorem]{Lemma}
\newtheorem{corollary}[theorem]{Corollary}
\newtheorem{example}[theorem]{Example}

\newtheorem{remark}{Remark}
\newtheorem{definition}{Definition}

\title{An Intrinsic Sobolev Trace Theory for Riemannian Vector Bundles with Applications to Quasilinear Elliptic Equations}

\author{Carlos Daniel Velázquez-Mendoza, María de los Ángeles Sandoval-Romero, Romulo Diaz Carlos}
\begin{document}
 \maketitle

\begin{abstract}
Let \(\mathbf{E}\to M\) be a finite-rank Riemannian vector bundle over a compact Riemannian manifold with nonempty smooth boundary, equipped with a compatible connection. We develop an intrinsic trace theory for Sobolev sections defined through weak covariant derivatives. For every integer \(m\ge1\) and every \(1\le p<\infty\), we construct continuous trace operators for the covariant derivatives up to order \(m-1\) and characterize the Sobolev space with homogeneous Dirichlet boundary conditions as the kernel of the corresponding trace operator. As an application, we establish the existence of ground state solutions for a class of quasilinear elliptic equations on Riemannian vector bundles with generalized \((p,q)\)-growth. The proposed framework unifies several important geometric operators, including the Bochner \(p\)-Laplacian, the \((p,q)\)-Bochner Laplacian, and prescribed mean curvature-type operators.
\medskip

\noindent \textbf{Mathematics Subject Classification:} 
58J05, 46E35, 35J62, 58J32, 35J20.

\medskip

\noindent \textbf{Keywords:} Sobolev spaces of sections; intrinsic trace theory; Riemannian vector bundles; quasilinear elliptic equations on manifolds;
 Nehari manifold; ground state solutions

\end{abstract}		

    \section{Introduction}
Sobolev spaces of sections of vector bundles form a natural analytic
framework for geometric partial differential equations. They arise in
the study of connection Laplacians, Hodge and Dirac operators, covariant
Schrödinger operators, and nonlinear systems whose unknowns take values
in a nontrivial bundle. The scalar theory on Riemannian manifolds is
classical; see, among many references, \cite{Aubin,Hebey1}. Intrinsic
approaches to Sobolev spaces of sections, weak covariant derivatives,
and covariant differential operators have been developed in several
complementary settings
\cite{Guneysu,KohrNistor,BehzadanHolst,VelazquezSandoval}. In
particular, the framework introduced in \cite{VelazquezSandoval}
provides the analytic background adopted here: weak covariant
derivatives, geometric integration by parts, local-to-global norm
comparisons, density results, and Sobolev embeddings for sections. The present paper extends that intrinsic framework by developing a
boundary trace theory in the same setting and applying it to a nonlinear
variational problem on vector bundles.

\medskip

On a manifold with boundary, homogeneous Dirichlet conditions require a
trace theory compatible with the covariant definition of the Sobolev
spaces. Trace results for sections occur in the Hilbert-space and
Dirac-operator framework of Booß-Bavnbek and Wojciechowski
\cite{BoossWojciechowski}. Große and Schneider
\cite{GrosseSchneider} studied traces onto embedded submanifolds of
manifolds of bounded geometry without boundary, including vector-bundle-valued Sobolev
spaces, whereas Van Schaftingen and Winter
\cite{VanSchaftingenWinter} recently obtained a first-order
gauge-covariant trace theory on manifolds with boundary, with optimal
fractional boundary spaces and estimates involving the curvature of the
connection. These results address either operator-theoretic boundary
problems, traces onto submanifolds, or refined first-order fractional
trace spaces. Our purpose is complementary. Starting directly from weak
covariant derivatives, we construct the traces of
\(u,\nabla u,\ldots,\nabla^{m-1}u\) for every integer \(m\geq1\) and
\(1\leq p<\infty\), and prove the intrinsic kernel characterization
\(W^{m,p}_0(\textbf{E})=\ker\mathcal T_m\).

Since our goal is to obtain an intrinsic characterization of homogeneous
Dirichlet boundary conditions for the variational problem, an integer-order boundary
scale is sufficient. The proof is direct and local-to-global: it uses
boundary charts, local frames, extension by zero from half-balls,
locality of weak derivatives and traces, and the relation between
ordinary and covariant derivatives. To the best of our knowledge, such a direct intrinsic construction of
higher-order trace operators on vector bundles has not previously
appeared in the literature.

\medskip

The second aim of the paper is variational. Operators with
generalized \((p,q)\)-growth model the interaction of different
nonlinear diffusion regimes and have been studied in connection with
generalized reaction--diffusion equations, regularity, multiplicity,
concentration, and ground-state phenomena; see, for example,
\cite{chil,heli,AlvesFigueiredo,ambrep} and the references therein. A
particularly close recent Euclidean counterpart is the
Kirchhoff--Boussinesq problem studied by Tavares, Carlos, and Salirrosas
\cite{TavaresCarlosSalirrosas2025}, in which a biharmonic operator is
perturbed by a broad nonhomogeneous \(p\&q\)-type term. At the level of
this quasilinear perturbation, their operator is structurally close to
the scalar version of the divergence-form operator considered here,
although the complete equation is fourth order and belongs to a
different variational setting. 

\medskip

Most of this literature concerns scalar functions on Euclidean domains
or on \(\mathbb R^N\). Although nonlinear scalar analysis on
Riemannian manifolds is well developed \cite{Aubin,Hebey1}, the passage
to sections of an arbitrary vector bundle is not formal: the principal
operator must be defined through a connection, the nonlinear term must
act fiberwise, and compactness and homogeneous boundary conditions must
be formulated intrinsically in spaces of sections. The framework below
extends the usual scalar \((p,q)\)-models to finite-rank Riemannian
vector bundles and allows nonlinearities generated by fiberwise
potentials. To the best of our knowledge, the corresponding
ground-state problem has not previously been treated in this bundle
setting with the present degree of generality.

\medskip

The geometric operator underlying the problem is also of independent
interest. When \(p=q=2\) and \(a\equiv1\), it reduces to the connection,
or Bochner, Laplacian
\(\bigl(\nabla^{\mathbf E}\bigr)^*\nabla^{\mathbf E}\). This operator
appears as the principal second-order part of several fundamental
geometric operators. Indeed, on differential \(k\)-forms, the
Weitzenböck formula takes the form
\[
\Delta_H
=
d\delta+\delta d
=
\bigl(\nabla^{\Lambda^k}\bigr)^*\nabla^{\Lambda^k}
+
\mathscr R_k,
\]
where \(\nabla^{\Lambda^k}\) is the connection on
\(\Lambda^kT^*M\) induced by the Levi--Civita connection and
\(\mathscr R_k\) is a curvature endomorphism.

To make the Dirac example explicit, suppose only in this paragraph that
\(M\) is oriented and spin. A spin structure consists of a principal
\(\operatorname{Spin}(n)\)-bundle
\(P_{\operatorname{Spin}}(M)\to M\) together with a two-sheeted bundle
map $\vartheta:P_{\operatorname{Spin}}(M)
\longrightarrow P_{\operatorname{SO}}(M)$ which is equivariant with respect to the double covering
\(\operatorname{Spin}(n)\to\operatorname{SO}(n)\). If
\(\rho:\operatorname{Spin}(n)\to\operatorname{GL}(\Sigma_n)\) is the
complex spin representation, the associated spinor bundle is $S:=P_{\operatorname{Spin}}(M)\times_{\rho}\Sigma_n$. The Levi--Civita connection induces the spin connection \(\nabla^S\),
and the spin Dirac operator is locally given by $D\psi
:=\sum_{j=1}^{n}e_j\mathbin{\cdot}\nabla^S_{e_j}\psi$, where \((e_j)_{j=1}^{n}\) is a local orthonormal frame and
\(\mathbin{\cdot}\) denotes Clifford multiplication. Its square
satisfies the Lichnerowicz formula 
\begin{eqnarray*}
    D^2
=
(\nabla^S)^*\nabla^S
+
\frac{1}{4}\operatorname{Scal}_g\operatorname{id}_S.
\end{eqnarray*}
More generally, the square of a Dirac-type operator on a Clifford module
has the form \(D^2=(\nabla^S)^*\nabla^S+\mathscr R_S\), for a suitable
curvature endomorphism \(\mathscr R_S\). Thus the connection Laplacian
is a basic building block of geometric analysis and mathematical
physics \cite{BoossWojciechowski,Guneysu}. These linear identities also provide useful precedents for nonlinear
geometric operators on particular vector bundles. For a differential
form \(\omega\in\Omega^k(M)\), the quadratic Hodge energy is
\[
E_2^H(\omega)
:=
\frac{1}{2}
\int_M
\bigl(|d\omega|^2+|\delta\omega|^2\bigr)\,d\lambda_g.
\]
The two terms arise from the two adjoint first-order components of the
Hodge--Dirac operator \(D_H=d+\delta\). For variations compactly
supported in \(\operatorname{Int}(M)\), or under boundary conditions
for which the boundary terms vanish, one has
\[
\left.\frac{d}{dt}\right|_{t=0}
E_2^H(\omega+t\eta)
=
\int_M
\langle\delta d\omega+d\delta\omega,\eta\rangle
\,d\lambda_g,
\]
so its Euler--Lagrange operator is the Hodge Laplacian
\(\Delta_H=d\delta+\delta d\). Replacing each quadratic contribution
by its \(p\)-growth analogue gives
\[
E_p^H(\omega)
:=
\frac{1}{p}
\int_M
\bigl(|d\omega|^p+|\delta\omega|^p\bigr)\,d\lambda_g,
\]
whose Euler--Lagrange operator is the \(p\)-Hodge Laplacian, also called
the \(p\)-Laplacian on differential forms,
\[
\Delta_{p,H}\omega
:=
\delta\bigl(|d\omega|^{p-2}d\omega\bigr)
+
d\bigl(|\delta\omega|^{p-2}\delta\omega\bigr).
\]
This operator was introduced and studied on closed manifolds by Seto
\cite{Seto}. It reduces to \(\Delta_H\) when \(p=2\), while on
\(0\)-forms it becomes, up to the sign convention, the scalar
\(p\)-Laplace--Beltrami operator. In a different direction, nonlinear
Dirac-type equations have been studied through \(p\)-Dirac operators in
Euclidean and spin settings \cite{NolderRyan}.

The present equation follows a distinct connection-based route. Rather
than being tied to the exterior differential, the codifferential, or
Clifford multiplication, it is defined directly through a compatible
connection on an arbitrary finite-rank Riemannian vector bundle. In
particular, when \(a\equiv1\) and \(p=q\), its principal part is $\nabla^*
\bigl(
|\nabla^{\mathbf E}u|^{p-2}\nabla^{\mathbf E}u
\bigr)
=
-\Delta_p^{\mathbf E}u$, where
\(\Delta_p^{\mathbf E}u:=-\nabla^*(|\nabla^{\mathbf E}u|^{p-2}
\nabla^{\mathbf E}u)\) denotes the Bochner \(p\)-Laplacian associated
with \(\nabla^{\mathbf E}\). Although the linear Hodge and Dirac
operators are related to the connection Laplacian by Weitzenböck
formulas, their nonlinear \(p\)-analogues do not in general coincide
with the Bochner \(p\)-Laplacian when \(p\neq2\). They should instead
be viewed as complementary nonlinear extensions associated with
different first-order geometric structures.

In the scalar case, the class considered here also contains sums of
\(p\)- and \(q\)-Laplacians with a prescribed-mean-curvature-type
contribution. More precisely, for \(p=2\), the term 
\begin{eqnarray*}
    \nabla^*\left(
\dis\frac{\nabla u}{\sqrt{1+|\nabla u|^2}}
\right)
\end{eqnarray*}
is, up to the sign convention, the mean-curvature operator for graphs
\cite{GiustiPMC}.

\medskip

 Although variational methods for quasilinear elliptic equations with
generalized \((p,q)\)-growth have been extensively developed in the
Euclidean setting (see, e.g.,
\cite{AlvesFigueiredo,Marcellini1991,Lieberman1991,MaranoMosconi2018, Costa2023, CostaFigueiredo2023, CostaFigueiredoJunior2023
}
and the references therein), their intrinsic counterparts on
Riemannian vector bundles remain largely unexplored. In particular, to
the best of our knowledge, no general trace theory for Sobolev sections
adapted to variational problems has previously been available. The
present paper addresses this gap by establishing such a theory and
showing that it provides the natural variational framework for studying
quasilinear elliptic equations with generalized \((p,q)\)-growth on
Riemannian vector bundles.

\medskip

Combining the intrinsic analytical framework for Sobolev sections on
Riemannian vector bundles \cite{VelazquezSandoval,GrosseSchneider,KohrNistor}
with variational techniques for quasilinear elliptic equations with generalized \((p,q)\)-growth
\cite{AlvesFigueiredo,ambrep,Marcellini1991,Lieberman1991,MaranoMosconi2018, Costa2023, CostaFigueiredo2023, CostaFigueiredoJunior2023}, we are led to consider the following generalized \((p,q)\)-equation on a Riemannian vector bundle:
\begin{equation}\label{ProblemP}
\tag{$P$}
\left\{
\begin{array}{ll}
\nabla^{*}\!\left(a(|\nabla^\textbf{E} u|_{g\otimes h_\textbf{E}}^{p})\,|\nabla^\textbf{E} u|_{g\otimes h_\textbf{E}}^{p-2}\nabla^\textbf{E} u\right)
=
f(u)
& \text{in } M,\\[6pt]
u=0 & \text{on } \partial M.
\end{array}
\right.
\end{equation}
Here \(\textbf{E}\to M\) is a smooth vector bundle of finite rank over a compact and connected \(n\)-dimensional Riemannian manifold \((M,g)\) with nonempty smooth boundary, \(h_\textbf{E}\) is a fiber metric on \(\textbf{E}\), \(|\cdot|_{g\otimes h_\textbf{E}}\) is the induced metric on tensor bundles with values in \(\textbf{E}\), and \(\nabla^\textbf{E}\) is a compatible connection. The symbol \(\nabla^*\) denotes the formal adjoint of the induced connection acting on \(\textbf{E}\)-valued one-forms. Throughout the paper we assume \(2\leq p\leq q<n\), and we write \( q^*:=\frac{nq}{n-q}\).

The hypotheses on \(a:[0,+\infty)\to[0,+\infty)\) are the following:
\begin{enumerate}[label=\((a_{\arabic*})\),ref=\((a_{\arabic*})\)]
\item \label{Bi2a_1}
The function \(a\) is continuous, and there exist constants
\(k_1,k_2,k_3,k_4\geq0\), with \(k_1>0\) and, in the case \(p<q\), \(k_2>0\), such that
\[
k_1t^p+k_2t^q
\leq
a(t^p)t^p
\leq
k_3t^p+k_4t^q
\]
for every \(t>0\).

\item \label{Bi2a_2}
The map \(\displaystyle t\mapsto \frac{a(t^p)}{t^{q-p}}\) is nonincreasing on \((0,+\infty)\).

\item \label{Bi2a_3}
If \(\displaystyle A(t):=\int_0^t a(s)\,ds\), then the function \(t\mapsto A(t^p)\) is convex on \([0,+\infty)\).
\end{enumerate}

Assumption~\ref{Bi2a_2} implies
\begin{equation}\label{eq:A_lower_from_monotonicity}
A(\rho^p)
\geq
\frac{p}{q}a(\rho^p)\rho^p
\qquad\text{for every }\rho\geq0.
\end{equation}
Indeed, for \(0<\tau\leq\rho\),
\(\displaystyle a(\tau^p)\geq a(\rho^p)(\tau/\rho)^{q-p}\), and hence
\[
A(\rho^p)
=
p\int_0^\rho a(\tau^p)\tau^{p-1}\,d\tau
\geq
p\,a(\rho^p)\rho^{p-q}\int_0^\rho\tau^{q-1}\,d\tau
=
\frac{p}{q}a(\rho^p)\rho^p.
\]

Let \(\mathcal F:\textbf{E}\to\mathbb R\) be a fiberwise potential. For each
\(x\in M\), let \(\mathcal F_x:\textbf{E}_x\to\mathbb R\) denote its restriction to the fiber \(\textbf{E}_x\). We assume that \(\mathcal F_x\) is of class \(C^1\) with respect to the fiber variable, and we denote by \(f_x:\textbf{E}_x\to \textbf{E}_x\) its gradient with respect to the fiber metric, characterized by
\begin{equation}
D\mathcal F_x(\xi)[\zeta]=\langle f_x(\xi),\zeta\rangle_{h_\textbf{E}},
\end{equation}
for every \(x\in M\) and every \(\xi,\zeta\in \textbf{E}_x\). The resulting fiber-preserving map \(f:\textbf{E}\to \textbf{E}\), defined by \(f(\xi):=f_{\pi(\xi)}(\xi)\), is assumed to be continuous. For a measurable section \(u\) of \(\textbf{E}\), we write \(f(u)\) for the measurable section defined almost everywhere by \(f(u)(x):=f_x(u(x))\).

The nonlinear term satisfies the following assumptions:
\begin{enumerate}[label=\((f_{\arabic*})\),ref=\((f_{\arabic*})\)]
\item \label{Bi2f_1}
The following limit holds uniformly with respect to \(x\in M\):
\[
\lim_{|\xi|_{h_\textbf{E}}\to0}
\frac{|f_x(\xi)|_{h_\textbf{E}}}{|\xi|_{h_\textbf{E}}^{p-1}}
=0.
\]

\item \label{Bi2f_2}
There exist constants \(C_f>0\) and \(r\in(q,q^*)\) such that
\[
|f_x(\xi)|_{h_\textbf{E}}
\leq
C_f\bigl(1+|\xi|_{h_\textbf{E}}^{r-1}\bigr)
\]
for every \(x\in M\) and every \(\xi\in \textbf{E}_x\).

\item \label{Bi2f_3}
There exists \(\theta\in(q,r]\) such that
\[
0<\theta\mathcal F_x(\xi)
\leq
\langle f_x(\xi),\xi\rangle_{h_\textbf{E}}
\]
for every \(x\in M\) and every \(\xi\in \textbf{E}_x\setminus\{0_x\}\).

\item \label{Bi2f_4}
For every \(x\in M\) and every \(\xi\in \textbf{E}_x\setminus\{0_x\}\), the map
\[
t\longmapsto
\frac{\langle f_x(t\xi),t\xi\rangle_{h_\textbf{E}}}{t^q}
\]
is strictly increasing on \((0,+\infty)\).
\end{enumerate}
Since each \(\mathcal F_x\) is continuous, assumptions~\ref{Bi2f_1} and \ref{Bi2f_3} imply that \(\mathcal F_x(0_x)=0\) for every \(x\in M\). Consequently, $\mathcal F_x(\xi)
=
\int_0^1\langle f_x(t\xi),\xi\rangle_{h_\textbf{E}}\,dt$. The map $[0,1]\times \textbf{E}\longrightarrow\mathbb R$, $(t,\xi)\longmapsto
\langle f(t\xi),\xi\rangle_{h_\textbf{E}}$, is continuous. Since the interval \([0,1]\) is compact, integration with
respect to \(t\) shows that the resulting map
\(\mathcal F:\textbf{E}\to\mathbb R\) is continuous.

\bigskip

Under the assumptions above we prove the existence of a ground state
for \eqref{ProblemP}. Since the energy functional is only of class
\(C^1\), we avoid treating the Nehari set as a differentiable
hypersurface and instead verify the abstract radial-reduction
assumptions of Szulkin and Weth
\cite{SzulkinWethNehari}. The resulting projection identifies the
Nehari set homeomorphically with the unit sphere, while compact Sobolev
embeddings for sections provide the compactness needed to attain the
ground-state level.

\bigskip

Our work is closely related to
\cite{AlvesFigueiredo,ambrep,VelazquezSandoval,Marcellini1991,Lieberman1991,MaranoMosconi2018,Costa2023,CostaFigueiredo2023,CostaFigueiredoJunior2023},
which provide relevant foundations in intrinsic Sobolev theory, quasilinear elliptic equations, and variational methods.
The main contributions of the present paper are summarized as follows.

\begin{itemize}

\item[$(i)$]
In contrast to the Euclidean framework considered in
\cite{AlvesFigueiredo}, where the unknown is a scalar function and
the operator is formulated in divergence form, we develop an intrinsic
higher-order Sobolev and trace framework for sections of finite-rank
Riemannian vector bundles over compact Riemannian manifolds with smooth
boundary. Building on the Sobolev theory developed in
\cite{VelazquezSandoval}, we introduce higher-order trace operators
associated with weak covariant derivatives and characterize Sobolev
spaces with homogeneous Dirichlet boundary conditions through the
kernels of these operators.

\item[$(ii)$]
As an application of this framework, we consider the first-order case
$m=1$ and study quasilinear elliptic equations with generalized
$(p,q)$-growth on Riemannian vector bundles. We establish the existence
of ground state solutions by variational methods, thereby extending the
scope of the variational approaches developed in
\cite{Costa2023,CostaFigueiredo2023,CostaFigueiredoJunior2023}
to the intrinsic geometric setting.

\item[$(iii)$]
The transition from Euclidean domains to Riemannian vector bundles
requires an intrinsic reformulation of the underlying analytical
arguments. In particular, the absence of a global trivialization leads
naturally to the use of weak covariant derivatives, induced tensorial
connections, and geometric integration by parts. The resulting
framework provides a geometric setting in which quasilinear elliptic
problems can be formulated and treated intrinsically on Riemannian
vector bundles.

\end{itemize}

This paper is organized as follows. Section
\ref{sec:main_results_examples} states the main variational result and
presents representative examples of the operators covered by the
framework. Section \ref{sec:notation_preliminaries} fixes the geometric
notation and recalls the Sobolev spaces of sections defined through weak
covariant derivatives. Section \ref{sec:trace_theory} develops the
intrinsic trace theory and characterizes the zero-trace Sobolev spaces.
Section \ref{sec:variational_framework} introduces the energy and the
Nehari set, verifies the radial-reduction hypotheses of Szulkin and Weth
\cite{SzulkinWethNehari}, and proves the ground-state theorem. Finally,
Section \ref{sec:appendix} collects the superposition and
differentiability results used in the variational arguments.

\section{Main results and examples}\label{sec:main_results_examples}

The first group of results concerns the intrinsic trace theory for
Sobolev sections. Theorems
\ref{thm:integer_order_trace_vector_bundle} and
\ref{thm:higher_order_traces_bundle} construct the traces of a section
and of its covariant derivatives, while Theorem
\ref{thm:zero_trace_characterization_order_m_bundle} identifies the
zero-trace space with the kernel of the resulting boundary operator.
We now formulate the variational result and illustrate the class of
operators covered by it.

We first introduce the notion of weak solution used in the variational part. A section \(u\in W^{1,q}_0(\textbf{E})\) is said to be a weak solution of \eqref{ProblemP} if, for every \(\phi\in W^{1,q}_0(\textbf{E})\), one has
\begin{equation*}
\int_M a(|\nabla^\textbf{E} u|_{g\otimes h_\textbf{E}}^p)|\nabla^\textbf{E} u|_{g\otimes h_\textbf{E}}^{p-2}\langle\nabla^\textbf{E} u,\nabla^\textbf{E}\phi\rangle_{g\otimes h_\textbf{E}}\,d\lambda_g
=
\int_M \langle f(u),\phi\rangle_{h_\textbf{E}}\,d\lambda_g.
\end{equation*}
Once the energy functional is introduced, a ground state solution will mean a nontrivial weak solution whose energy is minimal among all nontrivial weak solutions.

We now state the main variational result of this manuscript.

\begin{theorem}\label{teo_A}
Assume that \ref{Bi2a_1}--\ref{Bi2a_3} and
\ref{Bi2f_1}--\ref{Bi2f_4} hold. Then problem
\eqref{ProblemP} admits a ground state solution.
\end{theorem}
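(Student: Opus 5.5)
The proof is variational, on the space \(X:=W^{1,q}_0(\textbf{E})\) with the norm \(\|u\|:=\|\nabla^{\textbf{E}}u\|_{L^q}\). This norm is equivalent to the full norm by a Poincaré inequality for sections, which I would obtain from Kato's inequality \(|\nabla|u||\le|\nabla^{\textbf{E}}u|\) and the scalar Poincaré inequality. The homogeneous Dirichlet condition is encoded through Theorem \ref{thm:zero_trace_characterization_order_m_bundle} with \(m=1\). The energy is
\[
I(u):=\frac1p\int_M A(|\nabla^{\textbf{E}}u|^p)\,d\lambda_g-\int_M\mathcal F(u)\,d\lambda_g .
\]
By \ref{Bi2a_1}, \ref{Bi2f_2} and the compact embeddings \(X\hookrightarrow L^s(\textbf{E})\) for \(s<q^*\) from the Sobolev theory recalled in Section \ref{sec:notation_preliminaries}, the appendix superposition results give \(I\in C^1(X)\). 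Its derivative is
\[
I'(u)\phi=\int_M a(|\nabla^{\textbf{E}}u|^p)|\nabla^{\textbf{E}}u|^{p-2}\langle\nabla^{\textbf{E}}u,\nabla^{\textbf{E}}\phi\rangle\,d\lambda_g-\int_M\langle f(u),\phi\rangle\,d\lambda_g ,
\]
so critical points are exactly the weak solutions. I would set \(\mathcal N:=\{u\ne0: I'(u)u=0\}\) and \(c:=\inf_{\mathcal N}I\). Every nontrivial critical point lies in \(\mathcal N\), so a minimizer of \(I\) on \(\mathcal N\) that is a critical point is a ground state.

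I would verify the Szulkin--Weth hypotheses \cite{SzulkinWethNehari} for the fibering maps \(h_u(t):=I(tu)\), \(t>0\), \(u\in S:=\{\|u\|=1\}\).
\begin{itemize}
\item \emph{Positivity near zero.} From \ref{Bi2f_1}--\ref{Bi2f_2}, for every \(\varepsilon>0\) we have \(|\mathcal F(\xi)|\le\varepsilon|\xi|^p+C_\varepsilon|\xi|^r\). Combined with \(A(t^p)\ge \frac{p}{q}(k_1t^p+k_2t^q)\), which follows from \eqref{eq:A_lower_from_monotonicity} and \ref{Bi2a_1}, this gives \(h_u(t)>0\) for small \(t\), uniformly on \(S\) (when \(p<q\), use \(\|\nabla u\|_p^p\gtrsim\) the \(L^p\) norm on the compact manifold).
\item \emph{Negativity at infinity.} By \ref{Bi2f_3}, \(\mathcal F(\xi)\ge c|\xi|^\theta-C\) with \(\theta>q\), while \(A(t^p)\le C(t^p+t^q)\). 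Hence \(h_u(t)\to-\infty\).
\item \emph{Unique critical point.} We have \(h_u'(t)/t^{q-1}=\int a(t^p|\nabla u|^p)t^{p-q}|\nabla u|^p-\int\langle f(tu),tu\rangle t^{-q}\). The first term is nonincreasing in \(t\) by \ref{Bi2a_2}, and the second is strictly increasing on \(\{u\ne0\}\) by \ref{Bi2f_4}, a set of positive measure. So \(h_u'\) has exactly one zero \(t_u\), it is a maximum, and \(t\mapsto tu\) crosses \(\mathcal N\) exactly once.
\item \emph{Uniform bounds.} \(t_u\) stays bounded away from \(0\) uniformly, and stays bounded on compact subsets of \(S\). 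The latter uses \(h_u\to-\infty\) uniformly on compacts, via continuity of \(u\mapsto\int\mathcal F(u)\).
\end{itemize}
Szulkin--Weth then gives that \(m:S\to\mathcal N\), \(u\mapsto t_uu\), is a homeomorphism. Moreover \(\Psi:=I\circ m\in C^1(S)\), critical points of \(\Psi\) correspond to nontrivial critical points of \(I\), and \(c=\inf_S\Psi>0\).

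To show \(c\) is attained, take a minimizing sequence \(u_k\in\mathcal N\). By \eqref{eq:A_lower_from_monotonicity} and \ref{Bi2f_3},
\[
I(u_k)=I(u_k)-\tfrac1\theta I'(u_k)u_k\ge\Bigl(\tfrac1q-\tfrac1\theta\Bigr)\int a(|\nabla u_k|^p)|\nabla u_k|^p\ge\Bigl(\tfrac1q-\tfrac1\theta\Bigr)k_2\|u_k\|^q ,
\]
with the \(k_1\) term and Poincaré used when \(p=q\). So \((u_k)\) is bounded and, up to a subsequence, \(u_k\rightharpoonup u\) in \(X\) (reflexive) and \(u_k\to u\) in \(L^r\) and \(L^p\).

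The main obstacle is compactness, that is, passing to the limit without strong convergence of gradients. First, \(u\ne0\): on \(\mathcal N\), \(k_1\|\nabla u_k\|_p^p+k_2\|u_k\|^q\le\varepsilon\|u_k\|_p^p+C_\varepsilon\|u_k\|_r^r\), which forces \(\|u_k\|_r\) to stay bounded below. Next, write \(J(u):=\frac1p\int A(|\nabla u|^p)\). Convexity \ref{Bi2a_3} makes \(J\) weakly lower semicontinuous, and so is \(u\mapsto\int\bigl(\tfrac1q a(|\nabla u|^p)|\nabla u|^p\bigr)\) after checking convexity, or more simply via Fatou-type arguments. Let \(t_u\) be the Nehari scaling of \(u\). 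Then
\[
c\le I(t_uu)\le\liminf_k I(t_uu_k)\le\liminf_k I(u_k)=c .
\]
Here the middle inequality uses weak lower semicontinuity of \(J\) together with strong convergence of \(\int\mathcal F(t_uu_k)\). The last inequality uses that \(t=1\) maximizes \(h_{u_k}\). Thus \(t_uu\in\mathcal N\) attains \(c\), and it is a minimizer of \(\Psi\) on \(S\) after normalization, hence a critical point of \(\Psi\), hence a nontrivial critical point of \(I\), that is, a ground state solution of \eqref{ProblemP}.
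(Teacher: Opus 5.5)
Your proposal is correct and takes essentially the same approach as the paper. It verifies the Szulkin--Weth hypotheses through the fibering maps $t\mapsto I(tu)$, bounds minimizing sequences via $I(u_k)-\frac1\theta\langle I'(u_k),u_k\rangle$, shows the weak limit is nontrivial, and attains $c$ at $t_uu$ using weak lower semicontinuity of the convex gradient term, convergence of $\int_M\mathcal F_x(t_uu_k)\,d\lambda_g$, and $I(t_uu_k)\le I(u_k)$, before concluding via the minimizer of $\Psi$ on $S$. Your minor deviations are fine: Poincaré from Kato's inequality plus the scalar inequality instead of the compactness/trace argument, nontriviality via a lower bound on $\|u_k\|_{L^r}$, and uniqueness of $t_u$ via strict monotonicity of $h_u'(t)/t^{q-1}$. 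The aside claiming weak lower semicontinuity of $u\mapsto\int_M a(|\nabla^{\mathbf E}u|^p)|\nabla^{\mathbf E}u|^p\,d\lambda_g$ does not follow from \ref{Bi2a_1}--\ref{Bi2a_3}, but you never use it, so simply drop it.
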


To establish this result, we will develop the necessary theory in the subsequent sections. Before presenting the formal proof, however, we provide several examples to illustrate how familiar quasilinear equations fit within our framework. 

\begin{remark}
 Throughout these examples, we consider the model potential \(\displaystyle \mathcal F_x(\xi):=\frac{1}{r}|\xi|_{h_\textbf{E}}^r\), whose fiber gradient is \(f_x(\xi)=|\xi|_{h_\textbf{E}}^{r-2}\xi\). For \(r\in(q,q^*)\), this nonlinearity satisfies \ref{Bi2f_1}, \ref{Bi2f_2}, \ref{Bi2f_3}, and \ref{Bi2f_4} for every \(\theta\in(q,r]\). Recall that \(\Delta_p^\textbf{E} u:=-\nabla^*(|\nabla^\textbf{E} u|_{g\otimes h_\textbf{E}}^{p-2}\nabla^\textbf{E} u)\) denotes the Bochner \(p\)-Laplacian associated with the connection \(\nabla^\textbf{E}\).   
\end{remark}

\begin{example}
Consider the constant function $a(t)\equiv1$, $t \geq 0$. If, in addition, \(p=q\), then the operator in \eqref{ProblemP}
coincides with the \(p\)-Laplacian. Hence, Theorem~\ref{teo_A} is valid for the problem
\[
\begin{cases}
-\Delta_p^{\mathbf E}u=f(u), & \text{in } M,\\
u=0, & \text{on } \partial M.
\end{cases}
\]
\end{example}

\begin{example}
Consider the function \(a(t)=1+t^{\frac{q-p}{p}}\), $t \geq 0$. It is straightforward to verify that \(a\) satisfies assumptions~\ref{Bi2a_1}-\ref{Bi2a_3} with $k_1=k_2=k_3=k_4=1$. Therefore, Theorem~\ref{teo_A} is applicable to the following quasilinear problem:
\begin{equation*}
\left\{
\begin{array}{rcl}
-\Delta_p^\textbf{E} u-\Delta_q^\textbf{E} u&=&f(u)\quad\text{in }M,\\
u&=&0\quad\text{on }\partial M.
\end{array}
\right.
\end{equation*}
\end{example}

\begin{example}\rm
Consider the function \(a(t)=1+\displaystyle\frac{1}{(1+t)^{\frac{p-1}{p}}}\), $t \geq 0$ and \(q=p\). One readily verifies that \(a\) satisfies assumptions
\ref{Bi2a_1}--\ref{Bi2a_3} with $k_1=1, k_2=0, k_3=2, k_4=0$.
Therefore, Theorem~\ref{teo_A} is applicable to the following quasilinear problem:
\begin{equation*}
\left\{
\begin{array}{rcl}
-\Delta_{p}^{\textbf{E}}u+\nabla^*\left(
\displaystyle\frac{|\nabla^\textbf{E} u|_{g\otimes h_\textbf{E}}^{p-2}\nabla^\textbf{E} u}{(1+|\nabla^\textbf{E} u|_{g\otimes h_\textbf{E}}^p)^{\frac{p-1}{p}}}\right)
&=&f(u)\quad\text{in }M,\\
u&=&0\quad\text{on }\partial M.
\end{array}
\right.
\end{equation*}
\end{example}

\begin{example}
Consider the function $a(t)=1+t^{\frac{q-p}{p}}+\frac{1}{(1+t)^{\frac{p-2}{p}}}$, $t\geq 0$. One readily verifies that \(a\) satisfies assumptions
\ref{Bi2a_1}--\ref{Bi2a_3} with $k_1=k_2=k_3=1, k_4=2$.
Therefore, Theorem~\ref{teo_A} is applicable to the following quasilinear problem:
\begin{equation*}
\left\{
\begin{array}{rcl}
-\Delta_p^\textbf{E} u-\Delta_q^\textbf{E} u+
\nabla^*\left(\displaystyle\frac{|\nabla^\textbf{E} u|_{g\otimes h_\textbf{E}}^{p-2}\nabla^\textbf{E} u}
{(1+|\nabla^\textbf{E} u|_{g\otimes h_\textbf{E}}^p)^{\frac{p-1}{p}}}\right)
&=&f(u)\quad\text{in }M,\\
u&=&0\quad\text{on }\partial M.
\end{array}
\right.
\end{equation*}
\end{example}

\begin{remark}
    Observe that, in each of the preceding examples, when
\(\mathbf{E}=M\times\mathbb{R}\) is the trivial line bundle,
\(\Delta_p^{\mathbf E}\) reduces to the classical \(p\)-Laplace--Beltrami
operator \(\Delta_{p,g}\). Therefore, the corresponding scalar quasilinear
problems on compact Riemannian manifolds with boundary are encompassed by our
vector bundle setting.
\end{remark}

\begin{remark}
The vector-bundle setting requires two genuinely fiberwise
reformulations. First, a scalar nonlinearity
\(f:\mathbb R\to\mathbb R\) cannot be applied directly to a section
\(u:M\to\textbf{E}\); this is why the nonlinear term is formulated
through a fiberwise potential \(\mathcal F:\textbf{E}\to\mathbb R\)
and its fiber gradient. Second, although the assumptions on \(a\) remain
essentially unchanged, the monotonicity assumptions on \(f\) must be
expressed along the lines \(\mathbb R\xi\subset\textbf{E}_x\), for each
nonzero \(\xi\in\textbf{E}_x\).
\end{remark}

%%%%%%%%%%%%%%%%%%%%%%%%%%%%%%%%%%%%%%%%%%%%%%%%%%%%%%%%%%
%%%%%%%%%%%%%%%%%%%%%%%%%%%%%%%%%%%%%%%%%%%%%%%%%%%%%%%%%%
%%%%%%%%%%%%%%%%%%%%%%%%%%%%%%%%%%%%%%%%%%%%%%%%%%%%%%%%%%

%%%%%%%%%%%%%%%%%%%%%%%%%%%%%%%%%%%%%%%%%%%%%%%%%%%%%%%%%%
%%%%%%%%%%%%%%%%%%%%%%%%%%%%%%%%%%%%%%%%%%%%%%%%%%%%%%%%%%
%%%%%%%%%%%%%%%%%%%%%%%%%%%%%%%%%%%%%%%%%%%%%%%%%%%%%%%%%%
    
\section{Notation and preliminaries}\label{sec:notation_preliminaries}

In this section we fix the geometric notation used throughout the paper and recall the Sobolev spaces of sections needed in the variational setting.

Throughout the paper, \((M,g)\) denotes a compact \(n\)-dimensional Riemannian manifold with smooth boundary \(\partial M\), and \(\operatorname{Int}(M)\) denotes its interior. We write \(d\lambda_g\) for the Riemannian measure induced by \(g\); this measure is well defined even when \(M\) is not orientable. We assume that \(\textbf{\textbf{E}}\to M\) is a smooth vector bundle of finite rank endowed with a fiber metric \(h_\textbf{E}\) and a compatible connection \(\nabla^\textbf{E}\). The induced metric on tensor bundles with values in \(\textbf{E}\), such as \(T^{(k,l)}(TM)\otimes \textbf{E}\), will be denoted by \(\langle \cdot,\cdot\rangle_{g\otimes h_\textbf{E}}\). In local frame coordinates, it is given by
\begin{align*}
\langle F,G\rangle_{g\otimes h_\textbf{E}}
 = g_{i_1r_1}\cdots g_{i_kr_k}g^{j_1s_1}\cdots g^{j_ls_l}h_{ab}
F^{i_1\dots i_k a}_{j_1\dots j_l}G^{r_1\dots r_k b}_{s_1\dots s_l}, \quad \forall F,G \in T^{(k,l)}(TM)\otimes \textbf{E}.
\end{align*}

\noindent We denote by \(\Gamma(\textbf{E})\) the space of smooth sections of \(\textbf{E}\). If \(u\in\Gamma(\textbf{E})\), its support is \(\operatorname{supp}(u):=\overline{\{x\in M\mid u(x)\neq0\}}\). We write 
\begin{equation*}
    \Gamma_c(\textbf{E}):=\{u\in\Gamma(\textbf{E})\mid \operatorname{supp}(u)\Subset M\}
\end{equation*}
and 
\begin{equation*}
    \Gamma_{c,\operatorname{Int}(M)}(\textbf{E}):=\{u\in\Gamma(\textbf{E})\mid \operatorname{supp}(u)\Subset\operatorname{Int}(M)\}.
\end{equation*}
Notice that a section compactly supported in \(M\) need not be compactly supported in \(\operatorname{Int}(M)\); hence \(\Gamma_{c,\operatorname{Int}(M)}(\textbf{E})\subsetneq\Gamma_c(\textbf{E})\) whenever the boundary is nonempty. We shall use the following definition of the spaces
\(L^p_{\mathrm{loc}}(\mathbf E)\) and \(L^p(\mathbf E)\).

\begin{definition}\label{def:Lp_sections}
Let $(M,g)$ be a Riemannian manifold, let $1\leq p\leq\infty$, and let
$\pi:\mathbf E\to M$ be a smooth vector bundle equipped with a fiber
metric $h_{\mathbf E}$.

A map $u:M\to\mathbf E$ is called a measurable section if
\begin{enumerate}[label=(\alph*)]
\item $u$ is measurable;
\item $(\pi\circ u)(x)=x$ for almost every $x\in M$.
\end{enumerate}

A measurable section $u$ is called an $L^p$-section if
$|u|_{h_{\mathbf E}}\in L^p(M,d\lambda_g)$.

We denote by
\[
\widetilde L^p(\mathbf E)
:=
\left\{
u:M\to\mathbf E
\mid
u\text{ is an }L^p\text{-section}
\right\}.
\]

Two elements $u,v\in\widetilde L^p(\mathbf E)$ are identified whenever
\[
u=v
\quad\text{almost everywhere on }M.
\]

The space of $L^p$-sections is then defined by
\[
L^p(\mathbf E)
:=
\widetilde L^p(\mathbf E)/\!\sim.
\]

Similarly,
\[
L^p_{\mathrm{loc}}(\mathbf E)
:=
\left\{
u:M\to\mathbf E
\mid
u\text{ is measurable},
\ (\pi\circ u)(x)=x\text{ a.e.},
\ |u|_{h_{\mathbf E}}\in
L^p_{\mathrm{loc}}(M,d\lambda_g)
\right\}\Big/\!\sim.
\]
\end{definition}

 Although the connection on \(\textbf{E}\) is denoted by \(\nabla^\textbf{E}\), the same symbol \(\nabla\) will be used for the induced connections on the tensor bundles \(T^{(0,s)}(TM)\otimes \textbf{E}\). Thus \(\nabla^s u\) denotes the \(s\)-th covariant derivative of \(u\), viewed as a section of \(T^{(0,s)}(TM)\otimes \textbf{E}\). Schematically, the iteration is given by

\[\begin{tikzcd}[ampersand replacement=\&, column sep=5em, row sep=2em]
  \Gamma(\textbf{E}) \arrow[r, "\nabla^\textbf{E}"] \& 
  \Gamma\bigl(T^{(0,1)}(TM)\otimes \textbf{E}\bigr) 
  \arrow[d, out=0, in=0, looseness=1.4, "{\nabla^{T^{(0,1)}(TM)\otimes \textbf{E}}}" description] \\
  \Gamma\bigl(T^{(0,3)}(TM)\otimes \textbf{E}\bigr) 
  \arrow[d, out=180, in=180, looseness=1.4, "{\nabla^{T^{(0,3)}(TM)\otimes \textbf{E}}}" description] \& 
  \Gamma\bigl(T^{(0,2)}(TM)\otimes \textbf{E}\bigr) \arrow[l, "{\nabla^{T^{(0,2)}(TM)\otimes \textbf{E}}}"] \\
  \cdots \arrow[r, "{\nabla^{T^{(0,s-1)}(TM)\otimes \textbf{E}}}"] \& 
  \Gamma\bigl(T^{(0,s)}(TM)\otimes \textbf{E}\bigr).
\end{tikzcd}
\]

We now recall the weak covariant derivatives used to define Sobolev spaces of sections.
\begin{definition}\label{def:weak_deriv}

Let \(s\geq1\). The \emph{formal adjoint} of the iterated covariant derivative \(\nabla^s\), denoted by \((\nabla^s)^*\), is the differential operator characterized by
\begin{equation}\label{eq:adjoint_def}
\int_M \langle \nabla^s u,v\rangle_{g\otimes h_\textbf{E}}\,d\lambda_g
=
\int_M \langle u,(\nabla^s)^*v\rangle_{h_\textbf{E}}\,d\lambda_g,
\end{equation}
for all \(u\in\Gamma_c(\textbf{E})\) and \(v\in\Gamma_{c,\operatorname{Int}(M)}(T^{(0,s)}(TM)\otimes \textbf{E})\). A section \(u\in L^1_{\operatorname{loc}}(\textbf{E})\) is said to admit a \emph{weak covariant derivative of order \(s\)} if there exists \(v\in L^1_{\operatorname{loc}}(T^{(0,s)}(TM)\otimes \textbf{E})\) such that 
\begin{equation}\label{adjoin_important}
\displaystyle \int_M\langle v,\psi\rangle_{g\otimes h_\textbf{E}}\,d\lambda_g=\int_M\langle u,(\nabla^s)^*\psi\rangle_{h_\textbf{E}}\,d\lambda_g
\end{equation}
for every \(\psi\in\Gamma_{c,\operatorname{Int}(M)}(T^{(0,s)}(TM)\otimes \textbf{E})\). Such a section \(v\), if it exists, is unique almost everywhere and is denoted by \(\nabla_w^s u\).
\end{definition}

\begin{definition}
    For \(m\in\mathbb N\) and \(1\leq p<\infty\), we define
\[
W^{m,p}(\textbf{E}):=\left\{ u\in L^{p}(\textbf{E}) \mathrel{\Biggr|} \begin{aligned} &\text{for all } 1\leq s\leq m, \text{ the weak derivative } \nabla_w^s u \\ &\text{exists and } \nabla_w^{s}u \in L^{p}(T^{(0,s)}(TM)\otimes \textbf{E}) \end{aligned} \right\}.\]

We endow \(W^{m,p}(\textbf{E})\) with the norm
\[
\|u\|_{W^{m,p}(\textbf{E})}
:=
\left(
\sum_{s=0}^{m}
\int_M
|\nabla_w^s u|_{g\otimes h_\textbf{E}}^{p}\,
d\lambda_g
\right)^{1/p},
\]
with the convention \(\nabla_w^0u=u\).
\end{definition}

\section{Trace theory and zero-trace Sobolev spaces in vector bundles}\label{sec:trace_theory}

To restrict Sobolev sections to the boundary, we need the trace theory associated with manifolds with boundary. Since we shall work with sections in vector bundles rather than scalar functions, we recall the construction in a form adapted to our setting.  We recall the following classical trace theorem in the half-space; see,
for instance, \cite{Leoni}.
\begin{theorem}[Trace theorem in the half-space]\label{thm:trace_half_space_order_m}
Let \(m\geq1\) and \(1\leq p<\infty\). For every multi-index \(\beta\) with \(|\beta|\leq m-1\), the map \(\gamma_{0}D^{\beta}:C_c^\infty(\overline{\mathbb R^n_+})\longrightarrow C^{\infty}(\mathbb{R}^{n-1})\) given by \(\gamma_{0}D^{\beta}(f)=(D^\beta f)|_{\partial\mathbb R^n_+}\), admits a unique continuous linear extension
\[
\gamma_0D^\beta:
W^{m,p}(\mathbb R^n_+)
\longrightarrow
W^{m-1-|\beta|,p}(\mathbb R^{n-1}).
\]
Moreover,
\[
W^{m,p}_0(\mathbb R^n_+)
=
\left\{
f\in W^{m,p}(\mathbb R^n_+)
\mathrel{\Big|}
\gamma_0D^\beta f=0
\text{ for every }|\beta|\leq m-1
\right\},
\]
where \(W^{m,p}_0(\mathbb R^n_+)\) denotes the closure of \(C_c^\infty(\mathbb R^n_+)\) in \(W^{m,p}(\mathbb R^n_+)\).
\end{theorem}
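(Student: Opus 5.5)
The plan is to prove the half-space statement in the classical way: first the case \(\beta=0\) (a first-order trace inequality with tangential derivatives), then the general \(\beta\) by applying it to derivatives, and finally the kernel characterization by truncation, translation and mollification.

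\emph{Trace inequality for smooth functions.} Fix \(f\in C_c^\infty(\overline{\mathbb R^n_+})\) and write \(x=(x',x_n)\). By the fundamental theorem of calculus in \(x_n\),
\[
|f(x',0)|^p=-\int_0^\infty\partial_n\bigl(|f|^p\bigr)(x',t)\,dt\le p\int_0^\infty|f|^{p-1}|\partial_nf|\,dt .
\]
Integrating in \(x'\) and applying Young's inequality gives
\[
\|f(\cdot,0)\|_{L^p(\mathbb R^{n-1})}\le C\|f\|_{W^{1,p}(\mathbb R^n_+)} .
\]
For \(p=1\) this step is immediate and needs no Young's inequality.

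\emph{Higher orders and extension.} For a multi-index \(\beta\) with \(|\beta|\le m-1\) and a tangential multi-index \(\alpha'\) with \(|\alpha'|\le m-1-|\beta|\), tangential derivatives commute with restriction to \(\{x_n=0\}\). Hence
\[
D^{\alpha'}\bigl((D^\beta f)|_{x_n=0}\bigr)=(D^{\alpha'+\beta}f)|_{x_n=0}.
\]
Applying the previous estimate to \(D^{\alpha'+\beta}f\), which lies in \(W^{1,p}\) because \(|\alpha'+\beta|\le m-1\), yields
\[
\|\gamma_0D^\beta f\|_{W^{m-1-|\beta|,p}(\mathbb R^{n-1})}\le C\|f\|_{W^{m,p}(\mathbb R^n_+)} .
\]
Next I will use that \(C_c^\infty(\overline{\mathbb R^n_+})\) is dense in \(W^{m,p}(\mathbb R^n_+)\). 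This follows from a cutoff \(\chi(\cdot/R)\), an inward translation \(f(\cdot+\varepsilon e_n)\), and mollification. Density gives the unique continuous linear extension.

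\emph{Kernel characterization.} The inclusion \(\subseteq\) holds because \(\gamma_0D^\beta\) vanishes on \(C_c^\infty(\mathbb R^n_+)\) and is continuous. The reverse inclusion is the main obstacle. Let \(f\) have all traces zero.
\begin{enumerate}[label=(\roman*)]
\item \emph{Compact support.} Cut off with \(\chi(\cdot/R)\); this preserves the vanishing of all traces, by the Leibniz rule and continuity of the trace.
\item \emph{Boundary Taylor estimate.} Approximating by smooth functions, I first show that \(\gamma_0D^\beta f=0\) for all \(|\beta|\le m-1\) implies, for \(|\alpha|\le m-1\),
\[
D^\alpha f(x',t)=\int_0^t\partial_nD^\alpha f(x',s)\,ds \quad\text{for a.e. } x' .
\]
Iterating this identity in the normal variable and applying Hardy's inequality gives
\[
\int_{\mathbb R^{n-1}}\int_0^\varepsilon\frac{|D^\alpha f|^p}{t^{p(m-|\alpha|)}}\,dt\,dx'\;\lesssim\;\int_{\mathbb R^{n-1}}\int_0^\varepsilon|\nabla^m f|^p\,dt\,dx' ,
\]
and the right-hand side tends to \(0\) as \(\varepsilon\to0\). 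For \(p=1\) the crude bound \(\int_0^\varepsilon t^{-k}|D^\alpha f|\,dt\lesssim\int_0^{\varepsilon}|D^{\alpha+k e_n}f|\), obtained by Fubini, suffices.
\item \emph{Boundary-layer cutoff.} Set \(f_\varepsilon=(1-\psi(x_n/\varepsilon))f\), where \(\psi=1\) near \(0\) and \(\psi=0\) on \([1,\infty)\). Derivatives falling on the cutoff produce terms of size \(\varepsilon^{-k}|D^{\alpha}f|\) on the strip \(\{t<\varepsilon\}\), with \(k+|\alpha|\le m\). These are controlled by step (ii), so \(f_\varepsilon\to f\) in \(W^{m,p}\).
\item \emph{Mollification.} Each \(f_\varepsilon\) vanishes near the boundary, so mollifying it with a radius smaller than \(\varepsilon\) yields elements of \(C_c^\infty(\mathbb R^n_+)\) approximating \(f\).
\end{enumerate}
Step (ii), the weighted Hardy-type estimate near the boundary, is the delicate part. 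For it I would follow the treatment in \cite{Leoni}.
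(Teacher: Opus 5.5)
\textbf{Overall.} The paper does not prove this statement; it quotes it as classical from \cite{Leoni}. Your outline is the standard textbook argument, and most of it is sound: the trace inequality, the passage to \(\gamma_0D^\beta\) through tangential derivatives, the density of \(C_c^\infty(\overline{\mathbb R^n_+})\), the inclusion of \(W^{m,p}_0(\mathbb R^n_+)\) in the common kernel, and the mechanism of steps (i), (iii), (iv). One small correction in (iv): the mollification radius must be smaller than the width \(c\varepsilon\) of the layer on which \(\psi(x_n/\varepsilon)=1\), not merely smaller than \(\varepsilon\).

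\textbf{The gap: step (ii) at \(p=1\).} You claim \(\int_0^\varepsilon t^{-k}|g(t)|\,dt\lesssim\int_0^\varepsilon|g^{(k)}(t)|\,dt\) for \(g=D^\alpha f(x',\cdot)\) vanishing to order \(k\) at \(t=0\). This is false. For \(k=1\), Fubini only gives \(\int_0^\varepsilon|g'(s)|\log(\varepsilon/s)\,ds\) on the right, and the logarithm cannot be removed. Take \(g(t)=\min\{t/\delta,1\}\): then \(\int_0^\varepsilon|g'|\,dt=1\), but \(\int_0^\varepsilon t^{-1}g(t)\,dt=1+\log(\varepsilon/\delta)\to\infty\) as \(\delta\to0\). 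For general \(k\), take \(g^{(k)}=\delta^{-1}\) on \([0,\delta]\) and \(0\) elsewhere. Then \(g(t)\geq c_k t^{k-1}\) for \(t\geq2\delta\), giving the same logarithmic blow-up. This is the endpoint failure of Hardy's inequality. As written, your proof of the kernel characterization therefore does not cover \(p=1\), which the statement includes.

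\textbf{The repair.} Step (iii) never uses the weight \(t^{-p(m-|\alpha|)}\). It only needs the unweighted layer estimate
\[
\varepsilon^{-pk}\int_{\mathbb R^{n-1}}\int_0^\varepsilon|D^\alpha f|^p\,dt\,dx'
\leq C\int_{\mathbb R^{n-1}}\int_0^\varepsilon|D^{\alpha+ke_n}f|^p\,dt\,dx',
\qquad 1\leq k\leq m-|\alpha|,
\]
which holds for every \(1\leq p<\infty\). To prove it:
\begin{enumerate}
\item Iterating the identity in (ii) gives Taylor's formula \(g(t)=\int_0^t\frac{(t-s)^{k-1}}{(k-1)!}g^{(k)}(s)\,ds\).
\item Hölder's inequality then yields \(|g(t)|^p\leq C\,t^{pk-1}\int_0^t|g^{(k)}(s)|^p\,ds\).
\item Integrating over \(t\in(0,\varepsilon)\) gives the estimate.
\end{enumerate}

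To use it, apply the estimate with \(k=j\) to the terms \(\varepsilon^{-j}\psi^{(j)}(x_n/\varepsilon)\,D^{\beta-je_n}f\) in the Leibniz expansion of \(D^\beta f_\varepsilon\). The vanishing traces required are \(\gamma_0D^{\beta-ie_n}f=0\) for \(1\leq i\leq j\). These are available because \(|\beta-ie_n|\leq|\beta|-1\leq m-1\). Each such term is then bounded in \(L^p\) by \(C\bigl(\int_{\{x_n<\varepsilon\}}|D^\beta f|^p\bigr)^{1/p}\), which tends to \(0\) by absolute continuity of the integral. This handles all \(1\leq p<\infty\) at once and makes Hardy's inequality unnecessary even for \(p>1\).
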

\begin{remark}
For our purposes, it is more convenient to formulate the trace theorem
in terms of all partial derivatives rather than only normal
derivatives, since this admits a natural intrinsic extension to vector
bundles, where boundary data are described by covariant jets associated
with the ambient connection. In the Euclidean setting, both formulations
are equivalent. Indeed, since the outward unit normal to
\(\partial\mathbb R^n_+\) is, up to sign, the constant vector \(e_n\),
one has \(\partial_\nu=\pm\partial_n\). Moreover, by density and the
continuity of the trace operator, the trace commutes with tangential
differentiation, that is,
\[
\gamma_0D^\beta u
=
D_{x'}^{\beta'}
\bigl(\gamma_0\partial_n^{\beta_n}u\bigr),
\qquad \text{where}\ 
\beta=(\beta',\beta_n).
\]
Therefore,
\(\gamma_0D^\beta u=0\) for every \(|\beta|\le m-1\) if and only if
\(\gamma_0\partial_\nu^j u=0\) for every \(0\le j\le m-1\).
\end{remark}
In boundary charts, we do not work with the whole half-space, but with half-balls. The next lemma explains how to pass from a function defined in a half-ball to a function defined in the half-space. Traces on half-balls will always be taken after this extension, so that no independent trace operator on the half-ball is needed.
\begin{lemma}[Extension by zero from a half-ball]\label{lem:zero_extension_half_ball_order_m}
Let \(B^+=B_R(0)\cap\mathbb R^n_+\) and \(B'=B_R(0)\cap\partial\mathbb R^n_+\). Suppose that \(v\in W^{m,p}(B^+)\) and that there exists a compact set \(K\subset B_R(0)\cap\{x_n\geq0\}\), at positive distance from the spherical part \(\partial B_R(0)\cap\{x_n\geq0\}\), such that \(v=0\) almost everywhere in \(B^+\setminus K\). Define \(\widetilde v:\mathbb R^n_+\to\mathbb R\) by \(\widetilde v(x)=v(x)\) if \(x\in B^+\), and \(\widetilde v(x)=0\) if \(x\in\mathbb R^n_+\setminus B^+\). Then \(\widetilde v\in W^{m,p}(\mathbb R^n_+)\), and for every multi-index \(\beta\) with \(|\beta|\leq m\), \(D^\beta\widetilde v=\widetilde{D^\beta v}\) in the weak sense, where \(\widetilde{D^\beta v}\) denotes the extension to zero of \(D^\beta v\) in \(\mathbb R^n_+\). In particular, \(\|\widetilde v\|_{W^{m,p}(\mathbb R^n_+)}=\|v\|_{W^{m,p}(B^+)}\).
\end{lemma}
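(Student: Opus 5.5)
The natural approach is a cutoff argument. We verify the weak derivative identity for \(\widetilde v\) against arbitrary test functions \(\varphi\in C_c^\infty(\mathbb R^n_+)\), that is, smooth functions with compact support in the open half-space. For such \(\varphi\) and \(|\beta|\le m\) we must show
\[
\int_{\mathbb R^n_+}\widetilde v\,D^\beta\varphi\,dx=(-1)^{|\beta|}\int_{\mathbb R^n_+}\widetilde{D^\beta v}\,\varphi\,dx .
\]
The difficulty is that \(\varphi\) restricted to \(B^+\) is not compactly supported in \(B^+\), because its support may cross the spherical part \(\partial B_R(0)\cap\{x_n>0\}\). The support of \(\varphi\) stays away from \(\{x_n=0\}\), so the flat part of the boundary causes no trouble.

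Set \(\delta:=\operatorname{dist}(K,\partial B_R(0)\cap\{x_n\ge0\})>0\). Choose \(\chi\in C_c^\infty(B_R(0))\) with \(0\le\chi\le1\) and \(\chi\equiv1\) on the neighborhood \(\{x:\operatorname{dist}(x,K)<\delta/2\}\). This is possible because the closed \(\delta/2\)-neighborhood of \(K\), intersected with \(\{x_n\ge0\}\), stays at distance \(\ge\delta/2\) from the sphere. Some care is needed with points of \(K\) on \(\{x_n=0\}\). Since \(\chi\) is defined on all of \(\mathbb R^n\), this only requires the \(\delta/2\)-neighborhood of \(K\) to lie inside \(B_R(0)\), which follows from \(K\subset B_R(0)\) being compact, after shrinking \(\delta\) if necessary.

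Then \(\chi\varphi\in C_c^\infty(B^+)\): its support is compact in \(B_R(0)\) and contained in \(\operatorname{supp}\varphi\subset\{x_n>0\}\). Next, \(v=0\) a.e. on \(B^+\setminus K\). A standard locality fact shows that \(D^\beta v=0\) a.e. on the open set \(B^+\setminus K\): test the definition of the weak derivative with functions supported there.

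With these facts we compute
\[
\int_{\mathbb R^n_+}\widetilde v\,D^\beta\varphi
=\int_{B^+}v\,D^\beta\varphi
=\int_{B^+}v\,D^\beta(\chi\varphi).
\]
The second equality holds because \(v\) vanishes off \(K\), and on \(K\) we have \(D^\beta(\chi\varphi)=D^\beta\varphi\), since \(\chi\equiv1\) on a neighborhood of \(K\). By the definition of the weak derivative in \(B^+\), this equals
\[
(-1)^{|\beta|}\int_{B^+}D^\beta v\,\chi\varphi=(-1)^{|\beta|}\int_{B^+}D^\beta v\,\varphi ,
\]
using that \(D^\beta v\) vanishes a.e. off \(K\), where \(\chi\ne1\) is possible. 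The last integral is \((-1)^{|\beta|}\int_{\mathbb R^n_+}\widetilde{D^\beta v}\,\varphi\), as required.

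It follows that \(D^\beta\widetilde v=\widetilde{D^\beta v}\in L^p(\mathbb R^n_+)\) for all \(|\beta|\le m\). Hence \(\widetilde v\in W^{m,p}(\mathbb R^n_+)\), and the norm identity is immediate because the integrands vanish outside \(B^+\).

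The main obstacle is only the bookkeeping in choosing \(\chi\) and checking that \(\chi\varphi\) is an admissible test function on \(B^+\). The positive-distance hypothesis is exactly what makes this work. The flat boundary part is harmless because the test functions on \(\mathbb R^n_+\) themselves avoid \(\{x_n=0\}\).
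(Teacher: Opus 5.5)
Your proof is correct and follows essentially the same route as the paper: a cutoff \(\chi\equiv1\) near \(K\) turns \(\chi\varphi\) into an admissible test function in \(B^+\), and locality of weak derivatives (\(D^\beta v=0\) a.e. wherever \(v=0\) a.e. on an open set) lets you remove \(\chi\) again. The only cosmetic difference is that the paper first shows \(|x|\le R-\delta\) on \(K\) by radial projection, then takes \(\chi\equiv1\) on \(B_{R-\delta/2}(0)\) and applies locality on the annular region \(U\); you take \(\chi\equiv1\) on a \(\delta/2\)-neighborhood of \(K\) (shrinking \(\delta\) via compactness of \(K\) in the open ball) and apply locality on \(B^+\setminus K\).
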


\begin{proof}
Let \(\Gamma_R:=\partial B_R(0)\cap\{x_n\geq0\}\) be the spherical part of the half-ball. If \(K=\varnothing\), then \(v=0\) almost everywhere in \(B^+\), and the conclusion is immediate. Suppose that \(K\neq\varnothing\). Since \(K\) is at positive distance from \(\Gamma_R\), let \(\delta:=\operatorname{dist}(K,\Gamma_R)>0\). For every \(x\in K\), one has \(\operatorname{dist}(x,\Gamma_R)=R-|x|\). Indeed, the inequality \(\operatorname{dist}(x,\Gamma_R)\leq R-|x|\) follows by taking the radial point \(\frac{R}{|x|}x\in\Gamma_R\) when \(x\neq0\), and the reverse inequality follows from \(|x-y|\geq ||y|-|x||=R-|x|\) for every \(y\in\Gamma_R\). The case \(x=0\) is immediate. Hence \(|x|\leq R-\delta\) for every \(x\in K\), and in particular \(K\subset B_{R-\frac{\delta}{2}}(0)\cap\{x_n\geq0\}\).

Define \(U:=B^+\cap\{x\in\mathbb R^n\mid |x|>R-\frac{\delta}{2}\}\). Then \(U\) is a relative neighborhood of \(\Gamma_R\) inside \(B_R(0)\cap\{x_n\geq0\}\), and \(U\cap K=\varnothing\). Since \(v=0\) almost everywhere in \(B^+\setminus K\), we have \(v=0\) almost everywhere in \(U\).

By locality of weak derivatives, \(D^\beta v=0\) almost everywhere in \(U\) for every \(|\beta|\leq m\). Indeed, if \(w\in W^{k,p}(B^+)\) vanishes almost everywhere in a relative open set \(V\subset B^+\), then, for every \(\psi\in C_c^\infty(V)\), regarded as a test function in \(B^+\), one has 
\begin{eqnarray*}
    \displaystyle\int_V D^\alpha w\,\psi\,dx=\int_{B^+}D^\alpha w\,\psi\,dx=(-1)^{|\alpha|}\int_{B^+}w\,D^\alpha\psi\,dx=0
\end{eqnarray*}
Since \(D^\alpha w\in L^p(V)\), it follows that \(D^\alpha w=0\) almost everywhere in \(V\).

We now prove that \(D^\beta\widetilde v=\widetilde{D^\beta v}\) weakly for every \(|\beta|\leq m\). Fix such a \(\beta\) and let \(\varphi\in C_c^\infty(\mathbb R^n_+)\). Choose \(\chi\in C_c^\infty(\mathbb R^n)\) such that \(\chi\equiv1\) on \(B_{R-\frac{\delta}{2}}(0)\) and \(\operatorname{supp}\chi\subset B_{R-\frac{\delta}{4}}(0)\). Then \((\chi|_{\mathbb R^n_+})\varphi|_{B^+}\in C_c^\infty(B^+)\), because \(\chi\) vanishes near the spherical part of \(B^+\). Since \(\widetilde v=0\) in \(\mathbb R^n_+\setminus B^+\), we have 
\begin{eqnarray*}
    \displaystyle\int_{\mathbb R^n_+}\widetilde v\,D^\beta\varphi\,dx=\int_{B^+}v\,D^\beta\varphi\,dx
\end{eqnarray*}
Moreover, \(v=0\) almost everywhere away from \(K\), and \(\chi\equiv1\) on a neighborhood of \(K\). Hence \(vD^\beta\varphi=vD^\beta(\chi\varphi)\) almost everywhere in \(B^+\). Therefore
\[
\displaystyle\int_{B^+}v\,D^\beta\varphi\,dx
=
\int_{B^+}v\,D^\beta(\chi\varphi)\,dx
=
(-1)^{|\beta|}
\int_{B^+}D^\beta v\,\chi\varphi\,dx.
\]
Inside \(B^+\), the set where \(\chi\neq1\) is contained in \(U\), up to the sphere \(\{|x|=R-\frac{\delta}{2}\}\), which has measure zero. Since \(D^\beta v=0\) almost everywhere in \(U\), we get \(\int_{B^+}D^\beta v\,\chi\varphi\,dx=\int_{B^+}D^\beta v\,\varphi\,dx=\int_{\mathbb R^n_+}\widetilde{D^\beta v}\,\varphi\,dx\). Combining the previous identities gives
\[
\int_{\mathbb R^n_+}\widetilde v\,D^\beta\varphi\,dx
=
(-1)^{|\beta|}
\int_{\mathbb R^n_+}\widetilde{D^\beta v}\,\varphi\,dx.
\]
Thus \(D^\beta\widetilde v=\widetilde{D^\beta v}\) weakly. Since this holds for all \(|\beta|\leq m\), \(\widetilde v\in W^{m,p}(\mathbb R^n_+)\), and the equality of norms follows directly from \(D^\beta\widetilde v=\widetilde{D^\beta v}\).
\end{proof}

We shall use the following convention: If \(v\in W^{m,p}(B^+)\) satisfies the support hypothesis of Lemma \ref{lem:zero_extension_half_ball_order_m}, and if \(\widetilde v\) denotes its extension by zero to \(\mathbb R^n_+\), then, for \(|\beta|\leq m-1\), writing \(\gamma_0D^\beta v=0\) on \(B'\) means that \(\left.(\gamma_0D^\beta\widetilde v)\right|_{B'}=0\) in \(W^{m-1-|\beta|,p}(B')\). This notation does not introduce an independent trace operator on the half-ball; it only abbreviates the half-space trace after extension by zero.

\begin{lemma}[Locality of the trace in the half-space]\label{lem:locality_trace_half_space_order_m}
Let \(O\subset\partial\mathbb R^n_+\) be relatively open, and let \(F\in W^{1,p}(\mathbb R^n_+)\). Suppose that there exists an open set \(V\subset\mathbb R^n\), with \(O\subset V\cap\partial\mathbb R^n_+\), such that \(F=0\) almost everywhere in \(V\cap\mathbb R^n_+\). Then \(\gamma_0F=0\) on \(O\).
\end{lemma}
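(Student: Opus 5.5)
The plan is to localize with a cutoff and then approximate by smooth functions that vanish near \(O\). Fix a point \(x_0\in O\) and choose \(\rho>0\) with \(B_{2\rho}(x_0)\subset V\). It suffices to prove \(\gamma_0F=0\) almost everywhere on \(B'_\rho(x_0):=B_\rho(x_0)\cap\partial\mathbb R^n_+\). Indeed, \(O\) is covered by countably many such balls (it is \(\sigma\)-compact), and a countable union of null sets is null.

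Let \(\chi\in C_c^\infty(B_{2\rho}(x_0))\) satisfy \(\chi\equiv1\) on \(B_\rho(x_0)\), and set \(G:=\chi F\in W^{1,p}(\mathbb R^n_+)\). We first check that \(G=0\) almost everywhere in \(\mathbb R^n_+\). On \(B_{2\rho}(x_0)\cap\mathbb R^n_+\subset V\cap\mathbb R^n_+\) we have \(F=0\). Outside that set, \(\chi=0\). The product rule in \(W^{1,p}\) also gives \(\nabla G=F\nabla\chi+\chi\nabla F\). Here \(\nabla F=0\) a.e. on \(V\cap\mathbb R^n_+\) by locality of weak derivatives, proved exactly as in Lemma~\ref{lem:zero_extension_half_ball_order_m}. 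Hence \(G=0\) as an element of \(W^{1,p}(\mathbb R^n_+)\), and by linearity of the trace from Theorem~\ref{thm:trace_half_space_order_m} (case \(m=1\), \(\beta=0\)), \(\gamma_0G=0\).

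It remains to show that \(\gamma_0(\chi F)=\chi|_{\partial\mathbb R^n_+}\,\gamma_0F\). On \(B'_\rho(x_0)\), where \(\chi=1\), this yields \(\gamma_0F=\gamma_0G=0\). To prove the identity, take \(F_k\in C_c^\infty(\overline{\mathbb R^n_+})\) with \(F_k\to F\) in \(W^{1,p}(\mathbb R^n_+)\); such a sequence exists by the density implicit in Theorem~\ref{thm:trace_half_space_order_m}, since the trace is defined as the continuous extension from that dense class. For these smooth functions the identity is trivial: \(\gamma_0(\chi F_k)=\chi|_{\partial}F_k|_{\partial}=\chi|_\partial\gamma_0F_k\). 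Moreover \(\chi F_k\to\chi F\) in \(W^{1,p}\), because multiplication by a smooth compactly supported function is bounded on \(W^{1,p}\). By continuity of \(\gamma_0:W^{1,p}(\mathbb R^n_+)\to L^p(\mathbb R^{n-1})\), the left side converges to \(\gamma_0(\chi F)\) and \(\gamma_0F_k\to\gamma_0F\) in \(L^p\). Multiplying the latter by the bounded function \(\chi|_\partial\) preserves \(L^p\) convergence, so the identity passes to the limit.

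The only step needing care is this density and multiplier argument. In particular, we need the trace to land in \(L^p\), i.e. \(W^{0,p}\) when \(m=1\), so that multiplication by \(\chi|_\partial\) is continuous; everything else is bookkeeping. As an alternative we could avoid the multiplier identity by approximating \(F\) directly with smooth functions vanishing near \(O\) via translation and mollification, but the cutoff argument above is cleaner.
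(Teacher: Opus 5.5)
Your proposal is correct and follows essentially the same route as the paper: cut off with \(\chi\) supported in a ball inside \(V\), observe that \(\chi F=0\) in \(W^{1,p}(\mathbb R^n_+)\) so \(\gamma_0(\chi F)=0\), and transfer this to \(F\) through the multiplier identity \(\gamma_0(\chi G)=\chi|_{\partial\mathbb R^n_+}\gamma_0G\), proved by density of \(C_c^\infty(\overline{\mathbb R^n_+})\) and continuity of the trace into \(L^p\). Your countable-cover remark spells out the almost-everywhere bookkeeping that the paper leaves implicit in ``since \(x_0\) was arbitrary.'' Your extra check that \(\nabla G=0\) is harmless but redundant, because \(G=0\) a.e. already forces its weak gradient to vanish.
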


\begin{proof}
Fix \(x_0\in O\). Since \(O\) is relatively open in \(\partial\mathbb R^n_+\) and \(V\) is open in \(\mathbb R^n\), we can choose \(r>0\) such that \(B_{2r}(x_0)\subset V\) and \(B_{2r}(x_0)\cap\partial\mathbb R^n_+\subset O\). Take \(\chi\in C_c^\infty(\mathbb R^n)\) such that \(\operatorname{supp}\chi\subset B_{2r}(x_0)\) and \(\chi\equiv1\) on \(B_r(x_0)\). Then \(\chi F=0\) in \(W^{1,p}(\mathbb R^n_+)\), because \(\operatorname{supp}\chi\cap\mathbb R^n_+\subset V\cap\mathbb R^n_+\) and \(F=0\) almost everywhere there. Hence \(\gamma_0(\chi F)=0\).

We claim that \(\gamma_0(\chi G)=\chi|_{\partial\mathbb R^n_+}\gamma_0G\) for every \(G\in W^{1,p}(\mathbb R^n_+)\). For \(G\in C_c^\infty(\overline{\mathbb R^n_+})\), the identity is immediate. For general \(G\), take \(G_j\in C_c^\infty(\overline{\mathbb R^n_+})\) such that \(G_j\to G\) in \(W^{1,p}(\mathbb R^n_+)\). Since multiplication by \(\chi|_{\mathbb R^n_+}\) is continuous in \(W^{1,p}(\mathbb R^n_+)\), we have \(\chi G_j\to\chi G\) in \(W^{1,p}(\mathbb R^n_+)\). By continuity of the trace, \(\gamma_0(\chi G_j)\to\gamma_0(\chi G)\) in \(L^p(\partial\mathbb R^n_+)\), and since multiplication by \(\chi|_{\partial\mathbb R^n_+}\) is continuous in \(L^p(\partial\mathbb R^n_+)\), we also have \(\chi|_{\partial\mathbb R^n_+}\gamma_0G_j\to\chi|_{\partial\mathbb R^n_+}\gamma_0G\). Passing to the limit in the smooth identity gives the claim.

Applying the claim to \(G=F\), we obtain \(0=\gamma_0(\chi F)=\chi|_{\partial\mathbb R^n_+}\gamma_0F\). Since \(\chi\equiv1\) on \(B_r(x_0)\), it follows that \(\gamma_0F=0\) on \(B_r(x_0)\cap\partial\mathbb R^n_+\). Since \(x_0\in O\) was arbitrary, \(\gamma_0F=0\) on \(O\).
\end{proof}

\begin{corollary}[Local characterization in a half-ball]\label{cor:local_characterization_half_ball_order_m}
Let \(B^+=B_R(0)\cap\mathbb R^n_+\), let \(B'=B_R(0)\cap\partial\mathbb R^n_+\), and let \(v\in W^{m,p}(B^+)\). Suppose that there exists a compact set \(K\subset B_R(0)\cap\{x_n\geq0\}\), at positive distance from the spherical part \(\partial B_R(0)\cap\{x_n\geq0\}\), such that \(v=0\) almost everywhere in \(B^+\setminus K\). If \(\gamma_0D^\beta v=0\) on \(B'\), in the sense of the convention above, for every \(|\beta|\leq m-1\), then \(v\) belongs to the closure of \(C_c^\infty(B^+)\) in \(W^{m,p}(B^+)\).
\end{corollary}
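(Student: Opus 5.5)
The plan is to move to the half-space, apply the characterization of \(W^{m,p}_0(\mathbb R^n_+)\) from Theorem~\ref{thm:trace_half_space_order_m}, and then localize the approximating sequence back into \(B^+\) with a cutoff.

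First, by Lemma~\ref{lem:zero_extension_half_ball_order_m}, the extension by zero \(\widetilde v\) lies in \(W^{m,p}(\mathbb R^n_+)\). Its weak derivatives are the zero extensions of those of \(v\). Set \(\delta:=\operatorname{dist}(K,\Gamma_R)>0\), as in that proof. Then \(\widetilde v=0\) almost everywhere outside \(K\subset B_{R-\delta}(0)\), and \(K\subset \overline{B_{R-\delta}(0)}\).

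Next I show that \(\gamma_0D^\beta\widetilde v=0\) on all of \(\partial\mathbb R^n_+\) for \(|\beta|\le m-1\). On \(B'\) this is the hypothesis, by the convention. On the relatively open set \(O:=\partial\mathbb R^n_+\setminus \overline{B_{R-\delta/2}(0)}\), take \(V:=\mathbb R^n\setminus \overline{B_{R-\delta/2}(0)}\). By locality of weak derivatives, \(D^\beta\widetilde v=0\) almost everywhere in \(V\cap\mathbb R^n_+\). Since \(D^\beta\widetilde v\in W^{1,p}(\mathbb R^n_+)\) for \(|\beta|\le m-1\), Lemma~\ref{lem:locality_trace_half_space_order_m} gives \(\gamma_0D^\beta\widetilde v=0\) on \(O\). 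One should also check that the trace of \(D^\beta\widetilde v\) as a \(W^{1,p}\) function agrees with \(\gamma_0D^\beta\widetilde v\) from Theorem~\ref{thm:trace_half_space_order_m}. This follows by density of \(C_c^\infty(\overline{\mathbb R^n_+})\) and continuity of both maps.

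The sets \(O\) and \(B'\) cover \(\partial\mathbb R^n_+\), so all traces vanish. Theorem~\ref{thm:trace_half_space_order_m} then gives \(\widetilde v\in W^{m,p}_0(\mathbb R^n_+)\). Hence there are \(\varphi_j\in C_c^\infty(\mathbb R^n_+)\) with \(\varphi_j\to\widetilde v\) in \(W^{m,p}(\mathbb R^n_+)\).

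To localize, choose \(\chi\in C_c^\infty(B_{R-\delta/4}(0))\) with \(\chi\equiv1\) on \(B_{R-\delta/2}(0)\). Then \(\chi\varphi_j\) is smooth with compact support in \(\mathbb R^n_+\cap B_{R-\delta/4}(0)\subset B^+\), so \(\chi\varphi_j\in C_c^\infty(B^+)\). Multiplication by \(\chi\) is bounded on \(W^{m,p}(\mathbb R^n_+)\) by the Leibniz rule, with a constant depending on \(\|\chi\|_{C^m}\). Therefore \(\chi\varphi_j\to\chi\widetilde v=\widetilde v\) in \(W^{m,p}(\mathbb R^n_+)\). Restricting to \(B^+\) yields \(\chi\varphi_j|_{B^+}\to v\) in \(W^{m,p}(B^+)\), which is the claim.

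The main obstacle is the global vanishing of the traces: the hypothesis only controls \(B'\), so one has to use the support condition through Lemma~\ref{lem:locality_trace_half_space_order_m}. The compatibility of \(\gamma_0D^\beta\) with the first-order trace of \(D^\beta\widetilde v\) is a routine density remark.
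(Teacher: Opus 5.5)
Your proposal is correct and follows essentially the same route as the paper: zero extension via Lemma~\ref{lem:zero_extension_half_ball_order_m}, vanishing of all traces on \(\partial\mathbb R^n_+\) via Lemma~\ref{lem:locality_trace_half_space_order_m}, the half-space characterization of \(W^{m,p}_0(\mathbb R^n_+)\), and a cutoff to bring the approximants back into \(B^+\). Your single open set \(O=\partial\mathbb R^n_+\setminus\overline{B_{R-\delta/2}(0)}\) replaces the paper's pointwise case split \(|x_0'|>R\) versus \(|x_0'|=R\), and your compatibility remark supplies a step the paper leaves implicit; two small fixes are needed: \(K\) lies only in the closed ball \(\overline{B_{R-\delta}(0)}\), not the open one, and the case \(K=\varnothing\) (where \(\delta\) is undefined) should be dispatched separately as trivial.
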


\begin{proof}
If \(K=\varnothing\), then \(v=0\) almost everywhere in \(B^+\), and the conclusion is immediate. Suppose that \(K\neq\varnothing\). Let \(\widetilde v\) be the extension by zero of \(v\) to \(\mathbb R^n_+\). By Lemma \ref{lem:zero_extension_half_ball_order_m}, \(\widetilde v\in W^{m,p}(\mathbb R^n_+)\) and \(D^\beta\widetilde v=\widetilde{D^\beta v}\) for every \(|\beta|\leq m\). By hypothesis, \(\left.(\gamma_0D^\beta\widetilde v)\right|_{B'}=0\) for every \(|\beta|\leq m-1\). We show that, in fact, \(\gamma_0D^\beta\widetilde v=0\) on all of \(\partial\mathbb R^n_+\) for every \(|\beta|\leq m-1\).

Fix \(|\beta|\leq m-1\). It remains to check the complement of \(B'\). Let \(\Gamma_R:=\partial B_R(0)\cap\{x_n\geq0\}\) and \(\delta:=\operatorname{dist}(K,\Gamma_R)>0\). As in the proof of Lemma \ref{lem:zero_extension_half_ball_order_m}, \(D^\alpha v=0\) almost everywhere in \(U:=B^+\cap\{x\in\mathbb R^n\mid |x|>R-\frac{\delta}{2}\}\) for every \(|\alpha|\leq m\).

Let \(x_0=(x_0',0)\in\partial\mathbb R^n_+\setminus B'\). Then \(|x_0'|\geq R\). If \(|x_0'|>R\), take \(\rho:=\frac{|x_0'|-R}{2}>0\). Then \(B_\rho(x_0)\cap\mathbb R^n_+\subset\mathbb R^n_+\setminus B^+\), and therefore \(D^\beta\widetilde v=0\) almost everywhere in \(B_\rho(x_0)\cap\mathbb R^n_+\). If \(|x_0'|=R\), take \(\rho:=\frac{\delta}{4}\). If \(x\in B_\rho(x_0)\cap\mathbb R^n_+\), then either \(x\notin B^+\), where \(D^\beta\widetilde v=0\), or \(x\in B^+\). In the latter case, \(|x|\geq R-\rho=R-\frac{\delta}{4}>R-\frac{\delta}{2}\), so \(x\in U\), and \(D^\beta v=0\) almost everywhere there. Thus, in both cases, \(D^\beta\widetilde v\) vanishes almost everywhere in a relative neighborhood of \(x_0\) in the half-space.

Since \(|\beta|\leq m-1\), we have \(D^\beta\widetilde v\in W^{1,p}(\mathbb R^n_+)\). Applying Lemma \ref{lem:locality_trace_half_space_order_m} to \(F=D^\beta\widetilde v\), we get \(\gamma_0D^\beta\widetilde v=0\) near \(x_0\). Since \(x_0\) was arbitrary, the trace vanishes on \(\partial\mathbb R^n_+\setminus B'\), and therefore on all of \(\partial\mathbb R^n_+\).

By Theorem \ref{thm:trace_half_space_order_m}, \(\widetilde v\in W^{m,p}_0(\mathbb R^n_+)\). Thus there exists \((\eta_k)_{k\in\mathbb N}\subset C_c^\infty(\mathbb R^n_+)\) such that \(\eta_k\to\widetilde v\) in \(W^{m,p}(\mathbb R^n_+)\). Choose \(\chi\in C_c^\infty(\mathbb R^n)\) such that \(\operatorname{supp}\chi\subset B_R(0)\) and \(\chi\equiv1\) on a neighborhood of \(K\). Since multiplication by \(\chi|_{\mathbb R^n_+}\) is continuous in \(W^{m,p}(\mathbb R^n_+)\), we get \(\chi\eta_k\to\chi\widetilde v=\widetilde v\) in \(W^{m,p}(\mathbb R^n_+)\). Moreover, \((\chi\eta_k)|_{B^+}\in C_c^\infty(B^+)\). Restricting to \(B^+\), we obtain a sequence in \(C_c^\infty(B^+)\) converging to \(v\) in \(W^{m,p}(B^+)\). Hence \(v\) belongs to the closure of \(C_c^\infty(B^+)\) in \(W^{m,p}(B^+)\).
\end{proof}
We shall use the following multiplication result repeatedly.
\begin{lemma}\label{lem:smooth_multiplication_sobolev_boundary}
Let \((M,g)\) be a smooth Riemannian manifold, with or without boundary, and  \(\textbf{E}\to M\) a smooth vector bundle equipped with a fiber metric and a compatible connection. Also, consider \(m\in\mathbb N\), \(1\leq p<\infty\), and suppose that \(\eta\in C^\infty(M)\) satisfies \(\|\nabla^j\eta\|_{L^\infty(M)}<\infty\) for every \(0\leq j\leq m\). Then \(u\mapsto\eta u\) is linear and continuous from \(W^{m,p}(\textbf{E})\) to itself. In particular, there exists \(C_{\eta,m}>0\) such that \(\|\eta u\|_{W^{m,p}(\textbf{E})}\leq C_{\eta,m}\|u\|_{W^{m,p}(\textbf{E})}\) for every \(u\in W^{m,p}(\textbf{E})\).

\end{lemma}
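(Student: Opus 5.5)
The plan is to prove a Leibniz formula for weak covariant derivatives: for $u\in W^{m,p}(\mathbf E)$ and $1\le s\le m$,
\[
\nabla_w^s(\eta u)=\sum_{j=0}^{s}\binom{s}{j}\,\mathrm{Sym}_{s,j}\bigl(\nabla^j\eta\otimes\nabla_w^{s-j}u\bigr).
\]
Here $\mathrm{Sym}_{s,j}$ is the fixed linear bundle map that places the $j$ slots of $\nabla^j\eta$ and the $s-j$ slots of $\nabla_w^{s-j}u$ into the appropriate positions of $T^{(0,s)}(TM)\otimes\mathbf E$ (a sum over shuffles). These maps have pointwise operator norm bounded by a combinatorial constant, because the metric on tensor bundles is the product metric.

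For smooth $u$, the formula follows by induction on $s$. One uses $\nabla(\eta\,T)=d\eta\otimes T+\eta\nabla T$ for any $T\in\Gamma(T^{(0,k)}(TM)\otimes\mathbf E)$. One also uses that $\nabla$ commutes with the shuffle/tensor operations, since the induced connections are derivations compatible with tensor products.

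Granting the formula, the estimate is immediate. Pointwise,
\[
|\nabla_w^s(\eta u)|\le C_s\sum_{j\le s}\|\nabla^j\eta\|_{L^\infty}\,|\nabla_w^{s-j}u|.
\]
Taking $L^p$ norms and summing over $s\le m$ gives $\|\eta u\|_{W^{m,p}}\le C_{\eta,m}\|u\|_{W^{m,p}}$. Linearity of $u\mapsto\eta u$ is clear.

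\textbf{Main obstacle.} The real work is justifying the Leibniz rule for weak derivatives as defined in Definition~\ref{def:weak_deriv}, directly from the duality identity \eqref{adjoin_important}. I would argue by induction on $s$ rather than via density, since density of smooth sections up to the boundary is not yet available at this point in the paper.

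The base case $s=1$ goes as follows. Take $\psi\in\Gamma_{c,\operatorname{Int}(M)}(T^*M\otimes\mathbf E)$. The smooth Leibniz rule applied to $\eta u'$ with $u'$ smooth, followed by duality, gives
\[
(\nabla)^*(\eta\psi)=\eta\,\nabla^*\psi-\iota_{\nabla\eta}\psi,
\]
where $\iota$ is contraction with the metric. Then
\[
\int\langle \eta u,\nabla^*\psi\rangle=\int\langle u,\nabla^*(\eta\psi)\rangle+\int\langle u,\iota_{\nabla\eta}\psi\rangle=\int\langle \eta\nabla_w u+d\eta\otimes u,\psi\rangle .
\]
Here $\eta\psi$ is still compactly supported in the interior, so it is an admissible test section.

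The inductive step needs one more ingredient: the consistency statement that $\nabla_w^{s}u$ equals the weak first covariant derivative of $\nabla_w^{s-1}u$. This follows from $(\nabla^s)^*=(\nabla^{s-1})^*\circ\nabla^*$ on interior-supported test sections, which is an identity between formal adjoints of smooth operators. With this in hand, I apply the $s=1$ case to the sections $\nabla^j\eta\otimes\nabla_w^{s-1-j}u$, which are again $L^p_{\mathrm{loc}}$ sections of tensor bundles with weak derivatives. This is valid because the base-case argument works verbatim for any tensor product of a smooth tensor with a weakly differentiable section of $T^{(0,k)}(TM)\otimes\mathbf E$.

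Integrability is not a problem: every term in the formula lies in $L^p\subset L^1_{\mathrm{loc}}$, since $\nabla^j\eta$ is bounded and $M$ has finite local volume.
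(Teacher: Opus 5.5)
Your proposal is correct, but it takes a different route from the paper, and your remark that density ``is not yet available at this point'' is inaccurate.

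\textbf{The paper's route.} The paper proves the estimate only for smooth $u\in W^{m,p}(\mathbf E)\cap\Gamma(\mathbf E)$, using the classical iterated Leibniz rule. It then extends the estimate to all of $W^{m,p}(\mathbf E)$ using the Meyers--Serrin density theorem \cite[Theorem 8]{VelazquezSandoval}, which it cites precisely here. For $u_k\to u$ with $u_k$ smooth, the sequence $(\eta u_k)$ is Cauchy in $W^{m,p}(\mathbf E)$. Its limit agrees with the pointwise product $\eta u$ because $\eta u_k\to\eta u$ in $L^p$ and $W^{m,p}(\mathbf E)$ is complete; the paper leaves this step implicit.

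\textbf{Your route.} You prove the weak Leibniz rule directly from the duality \eqref{adjoin_important}. The base case rests on $\nabla^*(\eta\psi)=\eta\nabla^*\psi-\iota_{\nabla\eta}\psi$. The induction rests on the factorization $(\nabla^s)^*=(\nabla^{s-1})^*\circ\nabla^*$ on interior-supported test sections. Both steps check out, and the generalization of the base case to a smooth tensor times a weakly differentiable section also works.

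Two small points need attention.
\begin{itemize}
\item You use the factorization in both directions but state only one. You state that $\nabla_w^{s-1-j}u$ has weak first derivative $\nabla_w^{s-j}u$. You also need the converse: if $\nabla_w^{s-1}(\eta u)$ exists and has a weak first derivative $z$, then $\nabla_w^{s}(\eta u)$ exists and equals $z$. This is what gives existence of $\nabla_w^s(\eta u)$, and the same one-line computation proves it.
\item Your combinatorics double count. If $\mathrm{Sym}_{s,j}$ already sums over the order-preserving shuffles, the prefactor $\binom sj$ must be dropped, or $\mathrm{Sym}_{s,j}$ read as an average. The clean form is a sum over subsets $S\subseteq\{1,\dots,s\}$, with $\nabla^{|S|}\eta$ in the slots of $S$ in their original order. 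This affects only the constant.
\end{itemize}

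\textbf{What each approach buys.} The paper's argument is shorter but depends on an external approximation theorem. Yours is self-contained and holds for any $L^1_{\mathrm{loc}}$ section with weak derivatives up to order $s$, not only for $W^{m,p}$. It also yields the explicit a.e.\ formula for $\nabla_w^s(\eta u)$. The paper later recovers this kind of identity by density, when it computes $\operatorname{Tr}_j(\psi_\alpha u)$. You could also skip the induction entirely: take the adjoint of the smooth order-$s$ Leibniz formula in one step, expressing $\eta\,(\nabla^s)^*\psi$ as a sum of terms $(\nabla^{s-|S|})^*$ applied to smooth contractions of $\psi$ with $\nabla^{|S|}\eta$.
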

\begin{proof}
First let \(u\in W^{m,p}(\textbf{E})\cap\Gamma(\textbf{E})\). By the iterated Leibniz rule for the induced connection, for every \(0\leq s\leq m\) there exists \(A_s>0\), depending only on \(s\), such that
\[
|\nabla^s(\eta u)|_{g\otimes h_\textbf{E}}
\leq
A_s\sum_{t=0}^s
|\nabla^{s-t}\eta|_g\,|\nabla^t u|_{g\otimes h_\textbf{E}}.
\]
Taking \(L^p\)-norms and using the boundedness of the derivatives of \(\eta\), we get \[\|\nabla^s(\eta u)\|_{L^p}\leq A_s\displaystyle\sum_{t=0}^s\|\nabla^{s-t}\eta\|_{L^\infty}\|\nabla^t u\|_{L^p}.\] Summing over \(s=0,\dots,m\), we obtain the desired estimate for smooth sections.

For arbitrary \(u\in W^{m,p}(\textbf{E})\), we use \cite[Theorem 8]{VelazquezSandoval} to take \(u_k\in W^{m,p}(\textbf{E})\cap\Gamma(\textbf{E})\) with \(u_k\to u\) in \(W^{m,p}(\textbf{E})\). The estimate already proved shows that \((\eta u_k)_k\) is Cauchy in \(W^{m,p}(\textbf{E})\), and its limit is independent of the approximating sequence. This defines \(\eta u\), and the estimate follows by passing to the limit.
\end{proof}
We now globalize the trace construction. Compactness is used here in order to work with a finite boundary cover and obtain a global estimate without imposing additional uniformity assumptions on the geometry.

\begin{theorem}[Integer-order trace theorem for vector bundles]\label{thm:integer_order_trace_vector_bundle}
Let \((M,g)\) be a compact Riemannian manifold with smooth boundary, and \(\textbf{E}\to M\) be a smooth vector bundle of finite rank equipped with a fiber metric \(h_\textbf{E}\) and a compatible connection \(\nabla^\textbf{E}\). Also consider \(m\geq1\) and \(1\leq p<\infty\). Then the restriction map \(\gamma_0:\Gamma(\textbf{E})\to\Gamma(\textbf{E}|_{\partial M})\), \(\gamma_0(u)=u|_{\partial M}\), admits a unique continuous linear extension
\[
\operatorname{Tr}:W^{m,p}(\textbf{E})
\longrightarrow
W^{m-1,p}(\textbf{E}|_{\partial M}).
\]
In particular, there exists \(C>0\) such that \(\|\operatorname{Tr}(u)\|_{W^{m-1,p}(\textbf{E}|_{\partial M})}\leq C\|u\|_{W^{m,p}(\textbf{E})}\) for every \(u\in W^{m,p}(\textbf{E})\).
\end{theorem}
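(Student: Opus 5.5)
The plan is the standard localization argument. By density \cite[Theorem 8]{VelazquezSandoval}, smooth sections are dense in \(W^{m,p}(\textbf{E})\). It therefore suffices to prove the a priori estimate
\[
\|u|_{\partial M}\|_{W^{m-1,p}(\textbf{E}|_{\partial M})}\le C\|u\|_{W^{m,p}(\textbf{E})}
\]
for \(u\in\Gamma(\textbf{E})\cap W^{m,p}(\textbf{E})\). Uniqueness and linearity of the extension then follow from density. On the left, \(\textbf{E}|_{\partial M}\) carries the pulled-back metric and connection, \(\partial M\) carries the induced metric, and the \(W^{m-1,p}\) norm involves the iterated covariant derivatives \(\nabla^{\partial,s}\) on \(T^{(0,s)}(T\partial M)\otimes\textbf{E}|_{\partial M}\).

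\textbf{Cover and partition of unity.} By compactness, cover \(\partial M\) by finitely many boundary charts \(\phi_i:U_i\to B_{R}(0)\cap\{x_n\ge0\}\), \(i=1,\dots,N\), with \(\phi_i(U_i\cap\partial M)=B'\). Shrinking if necessary, assume that on each \(U_i\) there is a local orthonormal (or merely smooth) frame \(e_1,\dots,e_k\) of \(\textbf{E}\), and that the metric coefficients \(g_{ij},g^{ij}\), the Christoffel symbols of \(g\), the connection coefficients of \(\nabla^\textbf{E}\), and all their derivatives up to order \(m\) are bounded on \(\overline{U_i}\) (take charts extending to slightly larger ones). Choose a smooth partition of unity \(\{\eta_i\}\) subordinate to this cover near \(\partial M\), with \(\operatorname{supp}\eta_i\) compact in \(U_i\) and \(\sum_i\eta_i=1\) on a neighborhood of \(\partial M\).

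\textbf{Local step.} Write \(\eta_iu=\sum_a v^a_i e_a\) in the chart. The key comparison, proved by induction on \(s\) from \(\nabla e_a=\Gamma^b_{\ \cdot a}e_b\) and the Christoffel symbols, is that each component of \(\nabla^s(\eta_iu)\) equals \(\partial^\beta v^a_i\), \(|\beta|=s\), plus a combination of lower-order partials \(\partial^\gamma v^b_i\), \(|\gamma|<s\), with coefficients bounded on the chart. Together with the two-sided bounds on \(g\), this gives the norm equivalence
\[
c\|\eta_iu\|_{W^{m,p}}\le\sum_a\|v^a_i\|_{W^{m,p}(B^+)}\le C\|\eta_iu\|_{W^{m,p}}.
\]
Extending \(v^a_i\) by zero via Lemma~\ref{lem:zero_extension_half_ball_order_m} (the support hypothesis holds since \(\operatorname{supp}\eta_i\) is compact in \(U_i\)), Theorem~\ref{thm:trace_half_space_order_m} with \(\beta=0\) gives
\[
\|v^a_i(\cdot,0)\|_{W^{m-1,p}(\mathbb R^{n-1})}\le C\|v^a_i\|_{W^{m,p}(B^+)}.
\]
The same componentwise comparison applied on \(\partial M\) converts \(\sum_a\|v^a_i(\cdot,0)\|_{W^{m-1,p}(B')}\) into a bound for \(\|(\eta_iu)|_{\partial M}\|_{W^{m-1,p}(\textbf{E}|_{\partial M})}\). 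Here we use that the boundary chart \(x'\) together with the restricted frame is a chart with frame for \(\textbf{E}|_{\partial M}\), whose geometric coefficients are again bounded.

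\textbf{Gluing.} Since \(u|_{\partial M}=\sum_i(\eta_iu)|_{\partial M}\), the triangle inequality and Lemma~\ref{lem:smooth_multiplication_sobolev_boundary} give
\[
\|u|_{\partial M}\|\le\sum_iC_i\|\eta_iu\|_{W^{m,p}}\le C\|u\|_{W^{m,p}}.
\]
The estimate then extends by density; density of smooth sections in the target space is not needed, only completeness of \(W^{m-1,p}(\textbf{E}|_{\partial M})\), which holds because weak derivatives pass to \(L^p\) limits.

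\textbf{Main obstacle.} The delicate point is the local comparison between covariant and coordinate Sobolev norms, carried out on both \(M\) and \(\partial M\) with uniform constants. On the boundary, the induced Levi-Civita connection of \(\partial M\) differs from the restriction of the ambient one, so the boundary comparison must be done intrinsically with the induced metric on \(\partial M\) rather than by restricting \(\nabla^s u\). I would isolate the comparison as a lemma valid for any compact chart domain with frame, proved by induction on \(s\) via the Leibniz rule. Because it is stated for arbitrary manifolds with frames, it applies verbatim to \(\partial M\).
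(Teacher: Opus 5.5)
Your proposal is correct and follows essentially the same route as the paper. The shared steps are: a finite boundary cover with local frames and a partition of unity equal to $1$ near $\partial M$; the coordinate/covariant norm comparison on $M$ and on $\partial M$, which the paper cites as \cite[Lemma 5]{VelazquezSandoval} rather than reproving by induction; extension by zero followed by the half-space trace with $\beta=0$; gluing via Lemma~\ref{lem:smooth_multiplication_sobolev_boundary}; and extension from smooth sections using the density result \cite[Theorem 8]{VelazquezSandoval}.
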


\begin{proof}
We first prove the estimate for smooth sections. Since \(\partial M\) is compact, take finitely many regular boundary charts \((U_\alpha,\phi_\alpha)_{\alpha=1}^N\) covering \(\partial M\). Write \(B_\alpha^+:=\phi_\alpha(U_\alpha\cap\operatorname{Int}(M))\) and \(B_\alpha':=\phi_\alpha(U_\alpha\cap\partial M)\). Refining the cover if necessary, assume that \(\textbf{E}\) is trivialized over each \(U_\alpha\) by a smooth local frame \(e_{1,\alpha},\dots,e_{r,\alpha}\). Let \((\psi_\alpha)_{\alpha=1}^N\) be a smooth partition of unity, defined on a neighborhood of \(\partial M\), subordinate to the \(U_\alpha\)'s, with \(\operatorname{supp}\psi_\alpha\Subset U_\alpha\), and such that \(\displaystyle\sum_{\alpha=1}^N\psi_\alpha=1\) near \(\partial M\).

Let \(u\in\Gamma(\textbf{E})\). On \(U_\alpha\), write \(u=\displaystyle\sum_{a=1}^r u_\alpha^a e_{a,\alpha}\). Then \(\psi_\alpha u=\displaystyle\sum_{a=1}^r\psi_\alpha u_\alpha^a e_{a,\alpha}\). For each component, define \(f_\alpha^a:=(\psi_\alpha u_\alpha^a)\circ\phi_\alpha^{-1}\) on \(B_\alpha^+\). Since \(\operatorname{supp}\psi_\alpha\Subset U_\alpha\), the function \(f_\alpha^a\) vanishes in a neighborhood of the spherical part of \(B_\alpha^+\). Let \(\widetilde f_\alpha^a\) be its extension by zero to \(\mathbb R^n_+\). By Lemma \ref{lem:zero_extension_half_ball_order_m}, \(\widetilde f_\alpha^a\in W^{m,p}(\mathbb R^n_+)\), and \(\|\widetilde f_\alpha^a\|_{W^{m,p}(\mathbb R^n_+)}=\|f_\alpha^a\|_{W^{m,p}(B_\alpha^+)}\). Applying Theorem \ref{thm:trace_half_space_order_m} with \(\beta=0\), and then restricting from \(\mathbb R^{n-1}\) to \(B_\alpha'\), we obtain
\[
\left\|
\left.(\gamma_0\widetilde f_\alpha^a)\right|_{B_\alpha'}
\right\|_{W^{m-1,p}(B_\alpha')}
\leq
C_\alpha
\|f_\alpha^a\|_{W^{m,p}(B_\alpha^+)}.
\]
For smooth \(u\), the function \(\left.(\gamma_0\widetilde f_\alpha^a)\right|_{B_\alpha'}\) coincides with \((f_\alpha^a)|_{B_\alpha'}\), because \(\widetilde f_\alpha^a=f_\alpha^a\) near the flat part of the boundary. Summing over \(a\) and using \cite[Lemma 5]{VelazquezSandoval}, we obtain
\[
\sum_{a=1}^r
\left\|
\left.(\gamma_0\widetilde f_\alpha^a)\right|_{B_\alpha'}
\right\|_{W^{m-1,p}(B_\alpha')}
\leq
C_\alpha'
\|\psi_\alpha u\|_{W^{m,p}(\textbf{E}|_{U_\alpha})}.
\]
On the other hand, applying the usual local norm equivalence on \(\partial M\) to the restricted bundle \(\textbf{E}|_{\partial M}\to\partial M\), we have
\[
\|(\psi_\alpha u)|_{\partial M}\|_{W^{m-1,p}(\textbf{E}|_{U_\alpha\cap\partial M})}
\leq
C_\alpha''
\sum_{a=1}^r
\left\|
\left.(\gamma_0\widetilde f_\alpha^a)\right|_{B_\alpha'}
\right\|_{W^{m-1,p}(B_\alpha')}.
\]
Therefore \(\|(\psi_\alpha u)|_{\partial M}\|_{W^{m-1,p}(\textbf{E}|_{U_\alpha\cap\partial M})}\leq C_\alpha'''\|\psi_\alpha u\|_{W^{m,p}(\textbf{E}|_{U_\alpha})}\). By Lemma \ref{lem:smooth_multiplication_sobolev_boundary}, applied to the fixed function \(\psi_\alpha\), there exists \(C_\alpha''''>0\) such that \(\|\psi_\alpha u\|_{W^{m,p}(\textbf{E})}\leq C_\alpha''''\|u\|_{W^{m,p}(\textbf{E})}\). Thus
\[
\|(\psi_\alpha u)|_{\partial M}\|_{W^{m-1,p}(\textbf{E}|_{U_\alpha\cap\partial M})}
\leq
C_\alpha'''''\|u\|_{W^{m,p}(\textbf{E})}.
\]

Since the cover is finite and \(\displaystyle\sum_{\alpha=1}^N\psi_\alpha=1\) near \(\partial M\), we have \(u|_{\partial M}=\displaystyle\sum_{\alpha=1}^N(\psi_\alpha u)|_{\partial M}\). Using the triangle inequality and the local equivalence of norms on \(\textbf{E}|_{\partial M}\), we obtain \(C>0\) such that
\[
\|u|_{\partial M}\|_{W^{m-1,p}(\textbf{E}|_{\partial M})}
\leq
C\|u\|_{W^{m,p}(\textbf{E})}
\]
for every \(u\in\Gamma(\textbf{E})\cap W^{m,p}(\textbf{E})\).

Now let \(u\in W^{m,p}(\textbf{E})\). By Meyers--Serrin theorem for vector bundles \cite[Theorem 8]{VelazquezSandoval}, there exists \((u_k)_{k\in\mathbb N}\subset W^{m,p}(\textbf{E})\cap \Gamma(\textbf{E})\) such that \(u_k\to u\) in \(W^{m,p}(\textbf{E})\). The estimate just proved gives $\|u_k|_{\partial M}-u_\ell|_{\partial M}\|_{W^{m-1,p}(\textbf{E}|_{\partial M})}
\leq
C\|u_k-u_\ell\|_{W^{m,p}(\textbf{E})}$.

Thus \((u_k|_{\partial M})_k\) is Cauchy in \(W^{m-1,p}(\textbf{E}|_{\partial M})\). We define \(\operatorname{Tr}(u):=\lim_{k\to\infty}u_k|_{\partial M}\) in \(W^{m-1,p}(\textbf{E}|_{\partial M})\). The same estimate shows that this definition does not depend on the approximating sequence. Passing to the limit in \(\|u_k|_{\partial M}\|_{W^{m-1,p}(\textbf{E}|_{\partial M})}\leq C\|u_k\|_{W^{m,p}(\textbf{E})}\), we get \(\|\operatorname{Tr}(u)\|_{W^{m-1,p}(\textbf{E}|_{\partial M})}\leq C\|u\|_{W^{m,p}(\textbf{E})}\). Uniqueness follows from the density of \(W^{m,p}(\textbf{E})\cap \Gamma(\textbf{E})\) in \(W^{m,p}(\textbf{E})\) \cite[Theorem 8]{VelazquezSandoval}.
\end{proof}

We now introduce traces of higher covariant derivatives. For \(j\geq0\), set \(F_j:=T^{(0,j)}(TM)\otimes \textbf{E}\), with the convention \(F_0=\textbf{E}\). The bundle \(F_j\) is equipped with the product metric and the connection induced by the Levi--Civita connection and \(\nabla^\textbf{E}\).

\begin{theorem}[Higher order traces]\label{thm:higher_order_traces_bundle}
Let \((M,g)\) be a compact Riemannian manifold with a smooth boundary, and \(\textbf{E}\to M\) be a smooth vector bundle of finite rank equipped with a fiber metric and a compatible connection. Also consider \(m\geq1\) and \(1\leq p<\infty\). For each \(0\leq j\leq m-1\), the smooth restriction operator \(u\mapsto \gamma_0(\nabla^j u)=(\nabla^j u)|_{\partial M}\), initially defined for \(u\in\Gamma(\textbf{E})\), extends uniquely to a continuous linear map
\[
\operatorname{Tr}_j:
W^{m,p}(\textbf{E})
\longrightarrow
W^{m-j-1,p}
\left(
\left(T^{(0,j)}(TM)\otimes \textbf{E}\right)\big|_{\partial M}
\right).
\]
Consequently, the operator
\[
\mathcal T_m:
W^{m,p}(\textbf{E})
\longrightarrow
\bigoplus_{j=0}^{m-1}
W^{m-j-1,p}
\left(
\left(T^{(0,j)}(TM)\otimes \textbf{E}\right)\big|_{\partial M}
\right),
\]
defined by \(\mathcal T_m u:=(\operatorname{Tr}_0u,\operatorname{Tr}_1u,\dots,\operatorname{Tr}_{m-1}u)\), is linear and continuous.
\end{theorem}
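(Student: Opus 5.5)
The plan is to reduce to Theorem \ref{thm:integer_order_trace_vector_bundle}, applied to the bundle \(F_j=T^{(0,j)}(TM)\otimes\mathbf E\) in place of \(\mathbf E\), with Sobolev order \(m-j\) in place of \(m\). The bundle \(F_j\) is a smooth vector bundle of finite rank over the same compact manifold with boundary. It carries the product fiber metric \(g\otimes h_{\mathbf E}\) and the connection induced by the Levi--Civita connection and \(\nabla^{\mathbf E}\). This connection is compatible with the product metric because both factors are metric connections. So all hypotheses of Theorem \ref{thm:integer_order_trace_vector_bundle} hold for \(F_j\), with \(m-j\geq1\) since \(j\leq m-1\). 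It yields a continuous operator \(\operatorname{Tr}^{F_j}:W^{m-j,p}(F_j)\to W^{m-j-1,p}(F_j|_{\partial M})\) extending restriction of smooth sections.

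The key intermediate step is to show that \(u\mapsto\nabla_w^j u\) maps \(W^{m,p}(\mathbf E)\) continuously into \(W^{m-j,p}(F_j)\), with \(\|\nabla^j_w u\|_{W^{m-j,p}(F_j)}\leq\|u\|_{W^{m,p}(\mathbf E)}\). For smooth \(u\) this is immediate: \(\nabla^s(\nabla^j u)=\nabla^{s+j}u\) as sections of \(T^{(0,s)}(TM)\otimes F_j=F_{s+j}\), so the \(W^{m-j,p}(F_j)\)-norm of \(\nabla^j u\) is a partial sum of the \(W^{m,p}(\mathbf E)\)-norm of \(u\). For general \(u\) I will avoid proving directly that weak derivatives compose. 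Instead I will use the density of \(W^{m,p}(\mathbf E)\cap\Gamma(\mathbf E)\) given by \cite[Theorem 8]{VelazquezSandoval}. Take smooth \(u_k\to u\) in \(W^{m,p}(\mathbf E)\). Then \((\nabla^j u_k)_k\) is Cauchy in \(W^{m-j,p}(F_j)\), which is complete, so it has a limit \(w\). Since \(\nabla^j u_k\to\nabla^j_w u\) in \(L^p\), we get \(w=\nabla^j_w u\). The point needing care here is that \(W^{m-j,p}(F_j)\) is complete, equivalently that weak covariant derivatives pass to \(L^p\)-limits. This follows by testing \eqref{adjoin_important} against \(\psi\in\Gamma_{c,\operatorname{Int}(M)}\), where \((\nabla^s)^*\psi\) is smooth with compact support, so \(L^p\) convergence lets one pass to the limit on both sides. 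I expect this identification to be the only genuinely delicate point, and it is handled by the density argument above.

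Then I will define \(\operatorname{Tr}_j:=\operatorname{Tr}^{F_j}\circ\nabla^j_w\). It is linear and continuous as a composition of continuous linear maps, with \(\|\operatorname{Tr}_ju\|\leq C_j\|u\|_{W^{m,p}(\mathbf E)}\). For \(u\in\Gamma(\mathbf E)\cap W^{m,p}(\mathbf E)\), the section \(\nabla^j u\) is smooth, so \(\operatorname{Tr}_j u=(\nabla^j u)|_{\partial M}=\gamma_0(\nabla^j u)\); hence \(\operatorname{Tr}_j\) extends the smooth restriction operator. Uniqueness follows from the density of \(W^{m,p}(\mathbf E)\cap\Gamma(\mathbf E)\) (\cite[Theorem 8]{VelazquezSandoval}): two continuous extensions agreeing on a dense set coincide. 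Finally, \(\mathcal T_m\) is continuous because each component is, with norm bounded by \(\bigl(\sum_j C_j^p\bigr)^{1/p}\) for the \(\ell^p\)-sum norm on the finite direct sum.
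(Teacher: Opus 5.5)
Your proposal is correct and follows the paper's proof. Like the paper, you show \(\nabla_w^j u\in W^{m-j,p}(F_j)\) with \(\|\nabla_w^j u\|_{W^{m-j,p}(F_j)}\leq\|u\|_{W^{m,p}(\mathbf{E})}\), apply Theorem~\ref{thm:integer_order_trace_vector_bundle} to \(F_j\) at order \(m-j\geq1\), set \(\operatorname{Tr}_j:=\operatorname{Tr}^{F_j}\circ\nabla_w^j\), and get uniqueness from density. The one difference is that the paper simply asserts the identification \(\nabla_w^s(\nabla_w^j u)=\nabla_w^{s+j}u\), whereas your Meyers--Serrin approximation plus closedness argument actually justifies it.
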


\begin{proof}
Fix \(0\leq j\leq m-1\). Let \(F_j=T^{(0,j)}(TM)\otimes \textbf{E}\). If \(u\in W^{m,p}(\textbf{E})\), then \(\nabla_w^j u\in W^{m-j,p}(F_j)\). Indeed, for \(0\leq s\leq m-j\), the weak derivative \(\nabla_w^s(\nabla_w^j u)\) is naturally identified with \(\nabla_w^{s+j}u\), as a section of \(T^{(0,s)}(TM)\otimes F_j\simeq T^{(0,s+j)}(TM)\otimes \textbf{E}\). Therefore \(\|\nabla_w^j u\|_{W^{m-j,p}(F_j)}\leq \|u\|_{W^{m,p}(\textbf{E})}\).

Since \(m-j\geq1\), we may apply Theorem \ref{thm:integer_order_trace_vector_bundle} to \(F_j\to M\). This gives a continuous linear map
\[
\operatorname{Tr}^{F_j}:
W^{m-j,p}(F_j)
\longrightarrow
W^{m-j-1,p}(F_j|_{\partial M}).
\]
Define \(\operatorname{Tr}_j u:=\operatorname{Tr}^{F_j}(\nabla_w^j u)\). Then
\[
\|\operatorname{Tr}_j u\|_{W^{m-j-1,p}(F_j|_{\partial M})}
\leq
C\|\nabla_w^j u\|_{W^{m-j,p}(F_j)}
\leq
C\|u\|_{W^{m,p}(\textbf{E})}.
\]
Thus \(\operatorname{Tr}_j\) is linear and continuous. If \(u\in\Gamma(\textbf{E})\), then \(\nabla_w^j u=\nabla^j u\), and by definition of the trace on \(F_j\), \(\operatorname{Tr}_j u=\operatorname{Tr}^{F_j}(\nabla^j u)=(\nabla^j u)|_{\partial M}\). Hence \(\operatorname{Tr}_j\) extends the smooth restriction operator. Uniqueness follows from the density of \(W^{m,p}(\textbf{E})\cap \Gamma(\textbf{E})\) in \(W^{m,p}(\textbf{E})\). Finally, since each \(\operatorname{Tr}_j\) is continuous, the product map \(\mathcal T_m\) is continuous for any norm on the finite direct sum.
\end{proof}

The Green formula for iterated covariant derivatives explains why \(\mathcal T_m\) is the natural boundary operator. Indeed, the boundary terms in integration by parts involve restrictions of the form \((\nabla^j u)|_{\partial M}\), with \(0\leq j\leq m-1\). Thus, for higher order problems, the vanishing of the trace of \(u\) alone is not the natural zero boundary condition; the traces of all covariant derivatives up to order \(m-1\) must vanish.

\begin{definition}[Zero trace Sobolev space]\label{def:zero_trace_space_order_m_bundle}
For \(m\in\mathbb N\) and \(1\leq p<\infty\), we define
\[
W^{m,p}_0(\textbf{E})
:=
\overline{\Gamma_{c,\operatorname{Int}(M)}(\textbf{E})}^{\,\|\cdot\|_{W^{m,p}(\textbf{E})}}.
\]
Thus \(W^{m,p}_0(\textbf{E})\) is the closure, in \(W^{m,p}(\textbf{E})\), of smooth sections whose compact support is contained in the interior of \(M\).
\end{definition}

\begin{theorem}[Characterization of \(W^{m,p}_0(\textbf{E})\) by traces]\label{thm:zero_trace_characterization_order_m_bundle}
Let \((M,g)\) be a compact Riemannian manifold with smooth boundary, and \(\textbf{E}\to M\) be a smooth vector bundle of finite rank equipped with a fiber metric and a compatible connection. Also consider \(m\geq1\) and \(1\leq p<\infty\). Then \(W^{m,p}_0(\textbf{E})=\ker(\mathcal T_m)\). \textbf{E}quivalently,
\[
W^{m,p}_0(\textbf{E})
=
\left\{
u\in W^{m,p}(\textbf{E})
\mathrel{\Big|}
\operatorname{Tr}_j u=0
\text{ for every }0\leq j\leq m-1
\right\}.
\]
\end{theorem}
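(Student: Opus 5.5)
We prove the two inclusions separately.

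\emph{The inclusion \(W^{m,p}_0(\textbf{E})\subset\ker(\mathcal T_m)\).} This is the easy direction. If \(u\in\Gamma_{c,\operatorname{Int}(M)}(\textbf{E})\), then \(\nabla^j u\) vanishes in a neighborhood of \(\partial M\). Hence \(\operatorname{Tr}_j u=(\nabla^j u)|_{\partial M}=0\) for every \(0\le j\le m-1\), by Theorem~\ref{thm:higher_order_traces_bundle}. Since \(\mathcal T_m\) is continuous, its kernel is closed in \(W^{m,p}(\textbf{E})\). It therefore contains the closure of \(\Gamma_{c,\operatorname{Int}(M)}(\textbf{E})\), which is \(W^{m,p}_0(\textbf{E})\) by Definition~\ref{def:zero_trace_space_order_m_bundle}.

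\emph{The inclusion \(\ker(\mathcal T_m)\subset W^{m,p}_0(\textbf{E})\): localization.} Let \(u\in\ker(\mathcal T_m)\). We use the finite boundary cover \((U_\alpha,\phi_\alpha)\), the local frames and the partition \(\psi_\alpha\) from the proof of Theorem~\ref{thm:integer_order_trace_vector_bundle}. We add a cutoff \(\psi_0\), supported in \(\operatorname{Int}(M)\), so that \(\psi_0+\sum_\alpha\psi_\alpha=1\) on \(M\). By Lemma~\ref{lem:smooth_multiplication_sobolev_boundary}, each piece \(\psi_\alpha u\) lies in \(W^{m,p}(\textbf{E})\).

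\emph{The interior piece.} The piece \(\psi_0 u\) has compact support in \(\operatorname{Int}(M)\). Standard mollification in finitely many interior charts, or the Meyers--Serrin density of \cite[Theorem 8]{VelazquezSandoval} combined with multiplication by a cutoff that equals \(1\) on \(\operatorname{supp}\psi_0\), shows that \(\psi_0u\in W^{m,p}_0(\textbf{E})\).

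\emph{The boundary pieces.} Fix \(\alpha\) and write \(\psi_\alpha u=\sum_a w^a e_{a,\alpha}\). Set \(f^a:=w^a\circ\phi_\alpha^{-1}\) on \(B^+_\alpha\). By \cite[Lemma 5]{VelazquezSandoval}, \(f^a\in W^{m,p}(B_\alpha^+)\), and \(f^a\) satisfies the support hypothesis of Lemma~\ref{lem:zero_extension_half_ball_order_m}. The goal is to show that \(\gamma_0D^\beta f^a=0\) on \(B'_\alpha\) for every \(|\beta|\le m-1\).

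\emph{Key step: converting covariant traces into Euclidean traces.} First, the Leibniz rule gives \(\nabla^j(\psi_\alpha u)=\sum_{t\le j}c_t\,\nabla^{j-t}\psi_\alpha\otimes\nabla^t u\). Extending the multiplication-commutation argument of Lemma~\ref{lem:locality_trace_half_space_order_m} to smooth tensor coefficients, we obtain \(\operatorname{Tr}_j(\psi_\alpha u)=\sum_t c_t\,(\nabla^{j-t}\psi_\alpha)|_{\partial M}\otimes\operatorname{Tr}_t u=0\).

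Second, in local coordinates and frames, the components of \(\nabla^j v\) equal \(\partial^\beta v^a\) with \(|\beta|=j\), plus a combination with smooth coefficients (Christoffel symbols and connection forms) of \(\partial^\gamma v^b\) with \(|\gamma|<j\). This relation holds for smooth sections and passes to Sobolev sections by density. The relation is triangular in the order of differentiation, so it can be inverted: each \(\partial^\beta v^a\) is a smooth-coefficient combination of components of \(\nabla^t v\) with \(t\le|\beta|\). Taking traces, using continuity of the half-space trace (Theorem~\ref{thm:trace_half_space_order_m}) and the fact that traces commute with multiplication by smooth functions, we get \(\gamma_0D^\beta\widetilde f^a=0\) on \(B'_\alpha\) for all \(|\beta|\le m-1\). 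We verify this first for smooth approximants via \cite[Theorem 8]{VelazquezSandoval}, and then use continuity of both sides.

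\emph{Conclusion.} Corollary~\ref{cor:local_characterization_half_ball_order_m} now gives \(f^a\in\overline{C_c^\infty(B^+_\alpha)}\). Choose approximants \(\eta_k^a\in C_c^\infty(B_\alpha^+)\) converging to \(f^a\) and set \(\sum_a(\eta^a_k\circ\phi_\alpha)e_{a,\alpha}\in\Gamma_{c,\operatorname{Int}(M)}(\textbf{E})\). By the local norm equivalence, these sections converge to \(\psi_\alpha u\) in \(W^{m,p}(\textbf{E})\). Summing the finitely many pieces shows \(u\in W^{m,p}_0(\textbf{E})\).

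\emph{Main obstacle.} We expect the key step to be the main difficulty: carefully justifying, at the level of traces of Sobolev functions, the triangular relation between covariant and coordinate derivatives and its inversion, including the behavior of the trace under multiplication by smooth coefficients.
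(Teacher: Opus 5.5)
Your proposal is correct and follows essentially the same route as the paper. Both use closedness of $\ker(\mathcal T_m)$ for the easy inclusion; both localize with a partition of unity, handling interior pieces by Meyers--Serrin plus a cutoff; both transfer the vanishing traces to $\psi_\alpha u$ via the Leibniz rule; and both use the triangular relation between covariant and coordinate derivatives to get $\gamma_0D^\beta\widetilde f^a=0$ before invoking Corollary~\ref{cor:local_characterization_half_ball_order_m}. Your single interior cutoff $\psi_0$ and your direct inversion of the triangular system on smooth approximants are only cosmetic differences: they replace the paper's interior coordinate balls and its induction on $|\beta|$, and amount to the same computation.
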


\begin{proof}
We first prove that \(W^{m,p}_0(\textbf{E})\subseteq\ker(\mathcal T_m)\). Let \(u\in W^{m,p}_0(\textbf{E})\). By definition, there exists \(u_k\in\Gamma_{c,\operatorname{Int}(M)}(\textbf{E})\) such that \(u_k\to u\) in \(W^{m,p}(\textbf{E})\). Since each \(u_k\) has compact support contained in \(\operatorname{Int}(M)\), there is a neighborhood of \(\partial M\) where \(u_k\) vanishes. Hence \(\nabla^j u_k\) vanishes in a neighborhood of \(\partial M\) for every \(0\leq j\leq m-1\), and therefore \(\operatorname{Tr}_j u_k=0\). By continuity of \(\operatorname{Tr}_j\), \(\operatorname{Tr}_j u=\displaystyle\lim_{k\to\infty}\operatorname{Tr}_j u_k=0\). Since this holds for every \(j=0,\dots,m-1\), we get \(u\in\ker(\mathcal T_m)\).

Conversely, let \(u\in\ker(\mathcal T_m)\). Take a finite open cover \((U_\alpha)_{\alpha=1}^N\) of \(M\), formed by regular coordinate balls contained in \(\operatorname{Int}(M)\) and regular boundary charts near \(\partial M\). Let \((\psi_\alpha)_{\alpha=1}^N\) be a smooth partition of unity subordinate to this cover, with \(\operatorname{supp}\psi_\alpha\Subset U_\alpha\). Since \(u=\displaystyle\sum_{\alpha=1}^N\psi_\alpha u\), it is enough to prove that \(\psi_\alpha u\in W^{m,p}_0(\textbf{E})\) for each \(\alpha\).

We first prove that \(\operatorname{Tr}_j(\psi_\alpha u)=0\) for every \(0\leq j\leq m-1\). Since \(u\in\ker(\mathcal T_m)\), we have \(\operatorname{Tr}_t u=0\) for every \(0\leq t\leq m-1\). By \cite[Theorem 8]{VelazquezSandoval}, choose \(v_k\in W^{m,p}(\textbf{E})\cap \Gamma(\textbf{E})\) such that \(v_k\to u\) in \(W^{m,p}(\textbf{E})\). By continuity of the higher order traces, \(\operatorname{Tr}_t v_k\to\operatorname{Tr}_t u=0\) in \(W^{m-t-1,p}((T^{(0,t)}(TM)\otimes \textbf{E})|_{\partial M})\) for every \(0\leq t\leq m-1\).

Fix \(j\in\{0,\dots,m-1\}\). For smooth sections, the iterated Leibniz rule gives, up to the natural permutations of covariant indices,
\[
\nabla^j(\psi_\alpha v_k)
=
\sum_{t=0}^j
\binom jt
(\nabla^{j-t}\psi_\alpha)\otimes\nabla^t v_k.
\]
Restricting to \(\partial M\), we get
\[
\operatorname{Tr}_j(\psi_\alpha v_k)
=
\sum_{t=0}^j
\binom jt
\bigl((\nabla^{j-t}\psi_\alpha)|_{\partial M}\bigr)\otimes\operatorname{Tr}_t v_k,
\]
again up to the same natural permutations. These permutations are pointwise isometries, and multiplication by the fixed smooth tensors \((\nabla^{j-t}\psi_\alpha)|_{\partial M}\) is continuous on the corresponding Sobolev spaces over the compact boundary \(\partial M\). Since \(\operatorname{Tr}_t v_k\to0\) for every \(0\leq t\leq j\), it follows that \(\operatorname{Tr}_j(\psi_\alpha v_k)\to0\). On the other hand, \(v\mapsto\operatorname{Tr}_j(\psi_\alpha v)\) is continuous from \(W^{m,p}(\textbf{E})\) into the corresponding boundary Sobolev space, because multiplication by \(\psi_\alpha\) is continuous in \(W^{m,p}(\textbf{E})\) and \(\operatorname{Tr}_j\) is continuous. Since \(v_k\to u\) in \(W^{m,p}(\textbf{E})\), we also have \(\operatorname{Tr}_j(\psi_\alpha v_k)\to\operatorname{Tr}_j(\psi_\alpha u)\). By uniqueness of limits, \(\operatorname{Tr}_j(\psi_\alpha u)=0\). Since \(j\) was arbitrary, all higher order traces of \(\psi_\alpha u\) vanish.

We now distinguish the two types of charts. If \(U_\alpha\Subset\operatorname{Int}(M)\), then \(\psi_\alpha u\) is supported away from \(\partial M\). Choose \(\chi_\alpha\in C_c^\infty(\operatorname{Int}(M))\) such that \(\chi_\alpha\equiv1\) on a neighborhood of \(\operatorname{supp}\psi_\alpha\). By the Meyers--Serrin theorem for vector bundles \cite[Theorem 8]{VelazquezSandoval}, take \(z_k\in W^{m,p}(\textbf{E})\cap \Gamma(\textbf{E})\) such that \(z_k\to\psi_\alpha u\) in \(W^{m,p}(\textbf{E})\). By Lemma \ref{lem:smooth_multiplication_sobolev_boundary}, \(\chi_\alpha z_k\to\chi_\alpha\psi_\alpha u=\psi_\alpha u\) in \(W^{m,p}(\textbf{E})\). Moreover, each \(\chi_\alpha z_k\) is smooth and compactly supported in \(\operatorname{Int}(M)\). Thus \(\psi_\alpha u\in W^{m,p}_0(\textbf{E})\).

Suppose now that \(U_\alpha\) is a boundary chart. Write \(B_\alpha^+:=\phi_\alpha(U_\alpha\cap\operatorname{Int}(M))\) and \(B_\alpha':=\phi_\alpha(U_\alpha\cap\partial M)\). Let \(e_{1,\alpha},\dots,e_{r,\alpha}\) be a smooth local frame of \(\textbf{E}\) over \(U_\alpha\), and write \(\psi_\alpha u=\displaystyle\sum_{a=1}^r w_\alpha^a e_{a,\alpha}\). Set \(q_\alpha^a:=w_\alpha^a\circ\phi_\alpha^{-1}\) on \(B_\alpha^+\). Since \(\operatorname{supp}\psi_\alpha\Subset U_\alpha\), each \(q_\alpha^a\) vanishes outside a compact subset of \(B_\alpha^+\cup B_\alpha'\) at positive distance from the spherical part of \(B_\alpha^+\). Hence the convention above applies.

We claim that \(\gamma_0D^\beta q_\alpha^a=0\) on \(B_\alpha'\) for every \(|\beta|\leq m-1\) and every \(a=1,\dots,r\). \textbf{E}quivalently, if \(\widetilde q_\alpha^a\) denotes the extension by zero of \(q_\alpha^a\) to \(\mathbb R^n_+\), then \(\left.(\gamma_0D^\beta\widetilde q_\alpha^a)\right|_{B_\alpha'}=0\).

We prove this by induction on \(\ell=|\beta|\). For \(\ell=0\), the identity \(\operatorname{Tr}_0(\psi_\alpha u)=0\) means that the trace of \(\psi_\alpha u\) vanishes as a section of \(\textbf{E}|_{\partial M}\). In the chosen frame and boundary chart, the trace was constructed by taking components, extending them by zero to the half-space, and applying the half-space trace. Therefore \(\left.(\gamma_0\widetilde q_\alpha^a)\right|_{B_\alpha'}=0\) for every \(a\).

Assume that the claim has been proved for all derivatives of order at most \(\ell-1\), with \(1\leq\ell\leq m-1\). Since \(\operatorname{Tr}_\ell(\psi_\alpha u)=0\), the components of the trace of \(\nabla^\ell(\psi_\alpha u)\) vanish on \(B_\alpha'\). In the chosen chart and frame, the components of \(\nabla^\ell(\psi_\alpha u)\) have the form
\[
(\nabla^\ell(\psi_\alpha u))^a_{i_1\dots i_\ell}
=
\partial_{i_\ell}\cdots\partial_{i_1}w_\alpha^a
+
\sum_{b=1}^r
\sum_{|\beta|\leq\ell-1}
(B_\beta)^a_{b\,i_1\dots i_\ell}\,\partial^\beta w_\alpha^b.
\]
After passing to coordinates, this identity expresses these components as derivatives of order \(\ell\) of \(q_\alpha^a\), plus lower order terms with smooth coefficients. The identity is first clear for smooth sections. For \(\psi_\alpha u\in W^{m,p}(\textbf{E})\), take a smooth sequence converging to \(\psi_\alpha u\) in \(W^{m,p}(\textbf{E})\), apply the identity to the sequence, and pass to the limit using the continuity of weak derivatives, multiplication by smooth coefficients, and the trace operators after extension by zero. Hence the same identity holds at the level of boundary traces.

By the induction hypothesis, all lower order boundary traces appearing in the second sum vanish. Multiplication by smooth coefficients preserves this vanishing, because \(\gamma_0(hF)=h|_{\partial\mathbb R^n_+}\gamma_0F\) is immediate for smooth functions and extends by continuity to the Sobolev spaces involved. Since the trace of \(\nabla^\ell(\psi_\alpha u)\) vanishes, we get \(\left.\gamma_0(D^\beta\widetilde q_\alpha^a)\right|_{B_\alpha'}=0\) for every \(|\beta|=\ell\) and every \(a\). This closes the induction.

Thus \(\gamma_0D^\beta q_\alpha^a=0\) on \(B_\alpha'\), in the sense of the convention above, for every \(|\beta|\leq m-1\). By Corollary \ref{cor:local_characterization_half_ball_order_m}, each \(q_\alpha^a\) belongs to the closure of \(C_c^\infty(B_\alpha^+)\) in \(W^{m,p}(B_\alpha^+)\). Hence, for each \(a\), there exists \(\varphi_k^a\in C_c^\infty(B_\alpha^+)\) such that \(\varphi_k^a\to q_\alpha^a\) in \(W^{m,p}(B_\alpha^+)\). Define, on \(U_\alpha\cap\operatorname{Int}(M)\), \(s_k:=\displaystyle\sum_{a=1}^r(\varphi_k^a\circ\phi_\alpha)e_{a,\alpha}\). Since each \(\varphi_k^a\) has compact support contained in \(B_\alpha^+\), each \(s_k\) extends by zero to a smooth section with compact support contained in \(\operatorname{Int}(M)\). By the local norm equivalence in \cite[Lemma 5]{VelazquezSandoval}, \(s_k\to\psi_\alpha u\) in \(W^{m,p}(\textbf{E})\). Therefore \(\psi_\alpha u\in W^{m,p}_0(\textbf{E})\).

We have proved that every localized piece \(\psi_\alpha u\) belongs to \(W^{m,p}_0(\textbf{E})\). Since \(u=\displaystyle\sum_{\alpha=1}^N\psi_\alpha u\), we conclude that \(u\in W^{m,p}_0(\textbf{E})\). Thus \(\ker(\mathcal T_m)\subseteq W^{m,p}_0(\textbf{E})\), and the proof is complete.
\end{proof}

We will use the following Sobolev embeddings, which follow from \cite[Theorems 10 and 11]{VelazquezSandoval}.

\begin{lemma}\label{Embedding}
Let \((M,g)\) be a compact \(n\)-dimensional Riemannian manifold with smooth boundary, let \(\textbf{E}\to M\) be a smooth vector bundle of finite rank endowed with a fiber metric and a compatible connection, and let \(1\leq q<n\). Then the following assertions hold:
\begin{itemize}
\item[$(i)$] If \(\displaystyle 1\leq \ell\leq q^*:=\frac{nq}{n-q}\), then \(W^{1,q}_0(\textbf{E})\hookrightarrow L^\ell(\textbf{E})\) continuously.
\item[$(ii)$] If \(1\leq \ell<q^*\), then \(W^{1,q}_0(\textbf{E})\hookrightarrow L^\ell(\textbf{E})\) compactly.
\end{itemize}
\end{lemma}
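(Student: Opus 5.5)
The plan is to reduce both assertions to the scalar Sobolev embeddings on $M$, using Kato's inequality for compatible connections. This step is where the bundle structure enters. Fix $u\in\Gamma_{c,\operatorname{Int}(M)}(\textbf{E})$ and $\varepsilon>0$, and set $\rho_\varepsilon:=(|u|_{h_\textbf{E}}^2+\varepsilon^2)^{1/2}-\varepsilon$. Then $\rho_\varepsilon$ is smooth, nonnegative, and vanishes near $\partial M$. Since $\nabla^\textbf{E}$ is compatible with $h_\textbf{E}$, we have $d|u|^2=2\langle\nabla^\textbf{E}u,u\rangle_{g\otimes h_\textbf{E}}$. Cauchy--Schwarz then gives
\[
|d\rho_\varepsilon|_g\leq|\nabla^\textbf{E}u|_{g\otimes h_\textbf{E}}.
\]
As $\varepsilon\to0$, we get $\rho_\varepsilon\to|u|$ in $L^q(M)$. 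The gradients $d\rho_\varepsilon$ are bounded in $L^q$, and $q>1$, so reflexivity gives $|u|\in W^{1,q}_0(M)$ with $\| |u|\|_{W^{1,q}(M)}\leq\|u\|_{W^{1,q}(\textbf{E})}$. If $q=1$, I would use lower semicontinuity of the total variation instead, or note that $|u|$ is Lipschitz and get the gradient bound directly.

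For $(i)$ I will invoke the scalar Sobolev inequality on the compact manifold with boundary: $W^{1,q}_0(M)\hookrightarrow L^{q^*}(M)$. This is the content of \cite[Theorem 10]{VelazquezSandoval} applied to the trivial line bundle, or the classical result in \cite{Aubin,Hebey1}. It gives
\[
\|u\|_{L^{q^*}(\textbf{E})}=\||u|\|_{L^{q^*}(M)}\leq C\|u\|_{W^{1,q}(\textbf{E})}
\]
for all $u\in\Gamma_{c,\operatorname{Int}(M)}(\textbf{E})$. Since $M$ has finite volume, Hölder's inequality extends this to every $1\leq\ell\leq q^*$. By Definition \ref{def:zero_trace_space_order_m_bundle} with $m=1$, the estimate extends by density to $W^{1,q}_0(\textbf{E})$. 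The $L^\ell$-limit and the $W^{1,q}$-limit coincide a.e. because both are limits in $L^1$.

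For $(ii)$, Kato's inequality only controls $|u|$, and compactness of $|u_k|$ does not give compactness of $u_k$. So here I will localize instead. Take a bounded sequence in $W^{1,q}_0(\textbf{E})$. Cover $M$ by finitely many charts trivializing $\textbf{E}$ by orthonormal frames, with a partition of unity $\psi_\alpha$. By Lemma \ref{lem:smooth_multiplication_sobolev_boundary} each $\psi_\alpha u_k$ is bounded in $W^{1,q}$. By \cite[Lemma 5]{VelazquezSandoval} its component functions are bounded in the Euclidean space $W^{1,q}(B_\alpha^+)$ and vanish near the spherical part of the half-ball. Extending them by zero via Lemma \ref{lem:zero_extension_half_ball_order_m}, or trivially for interior balls, we obtain bounded sequences in $W^{1,q}$ of a bounded Lipschitz domain. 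Rellich--Kondrachov then gives subsequences converging in $L^\ell$ for $\ell<q^*$. A diagonal extraction over the finitely many $\alpha$ and $a$ makes each $\psi_\alpha u_k$ Cauchy in $L^\ell(\textbf{E})$, since the local $L^\ell$ norms are equivalent to the componentwise ones. Summing over $\alpha$, $u_k$ is Cauchy in $L^\ell(\textbf{E})$.

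I expect the main obstacle to be handling the endpoint $q^*$ in $(i)$ without circularity. The scalar inequality must be available on a manifold with boundary, which is why I restrict to zero-trace sections: their zero extension to a closed double of $M$, or the half-ball reduction above, gives the Euclidean Gagliardo--Nirenberg--Sobolev inequality chartwise. I would fall back on this chart-by-chart argument if the cited theorem is formulated only for $W^{1,q}(\textbf{E})$.
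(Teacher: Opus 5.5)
Your proof is correct, but it takes a different route from the paper. The paper gives no argument: it deduces both assertions directly from the Sobolev and Rellich--Kondrachov theorems for sections in \cite[Theorems 10 and 11]{VelazquezSandoval}.

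\textbf{Part $(i)$.} Your Kato reduction gives $\bigl\||u|_{h_{\mathbf{E}}}\bigr\|_{W^{1,q}(M)}\le\|u\|_{W^{1,q}(\mathbf{E})}$ with frame-independent constants, so only the scalar inequality is needed. Working on the dense class $\Gamma_{c,\operatorname{Int}(M)}(\mathbf{E})$ then lets you reach the endpoint $q^*$ from the Euclidean Gagliardo--Nirenberg--Sobolev inequality for compactly supported functions, with no extension operator. Your chartwise fallback is a fully rigorous version of this. You can also drop the reflexivity and $q=1$ case distinction. Since $0\le\rho_\varepsilon\le|u|_{h_{\mathbf{E}}}$ and $\rho_\varepsilon$ increases to $|u|_{h_{\mathbf{E}}}$ as $\varepsilon\downarrow0$, apply the scalar inequality to $\rho_\varepsilon$ itself and pass to the limit by monotone convergence.

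\textbf{Part $(ii)$.} You are right that compactness of $|u_k|_{h_{\mathbf{E}}}$ says nothing about $u_k$, so localization is needed. Your chain is sound: Lemma~\ref{lem:smooth_multiplication_sobolev_boundary}, then \cite[Lemma 5]{VelazquezSandoval}, then Lemma~\ref{lem:zero_extension_half_ball_order_m}, then Rellich--Kondrachov on balls and half-balls, with finitely many successive extractions. Note that this part never uses the vanishing trace, so it actually proves compactness on all of $W^{1,q}(\mathbf{E})$.

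\textbf{Comparison.} The citation is shorter. Your argument shows explicitly why the bundle case reduces to scalar and Euclidean facts: metric compatibility of $\nabla^{\mathbf{E}}$ enters only through Kato's inequality, and otherwise you use only finitely many charts and tools already proved in Section~\ref{sec:trace_theory}. There is no circularity, since nothing in your argument uses the Poincaré inequality, which itself relies on this lemma.
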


Since \((M,\lambda_g)\) has a finite measure, we also have the following elementary consequence.

\begin{remark}\label{embedding_boundary}
Let \((M,g)\) be a compact Riemannian manifold with a smooth boundary, and let \(1<p<q<\infty\). Then the embedding \(W^{1,q}_0(\textbf{E})\hookrightarrow W^{1,p}_0(\textbf{E})\) is continuous.
\end{remark}

We shall also use the compatibility between traces and fiber norms.

\begin{lemma}[Compatibility between traces and fiber norms]
\label{lem:trace_norm_compatibility}
Let $(M,g)$ be a compact Riemannian manifold with smooth boundary, and let
$\mathbf E\to M$ be a smooth vector bundle of finite rank equipped with a
fiber metric $h_{\mathbf E}$ and a compatible connection
$\nabla^{\mathbf E}$. If $u\in W^{1,q}(\mathbf E)$, with
$1\leq q<\infty$, then $|u|_{h_{\mathbf E}}\in W^{1,q}(M)$ and
\[
\operatorname{Tr}(|u|_{h_{\mathbf E}})
=
|\operatorname{Tr}_0u|_{h_{\mathbf E}}
\]
almost everywhere on $\partial M$.
\end{lemma}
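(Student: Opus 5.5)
The plan is to prove both assertions by smooth approximation, using a regularized norm to get around the non-differentiability of \(|\cdot|_{h_{\mathbf E}}\) at the zero section. For \(\varepsilon>0\) and \(v\in\Gamma(\mathbf E)\), set \(\rho_\varepsilon(v):=\bigl(|v|_{h_{\mathbf E}}^2+\varepsilon^2\bigr)^{1/2}-\varepsilon\). This is a smooth function on \(M\). Since \(\nabla^{\mathbf E}\) is compatible with \(h_{\mathbf E}\), we have \(d|v|^2_{h_{\mathbf E}}=2\langle\nabla^{\mathbf E}v,v\rangle_{h_{\mathbf E}}\), and hence
\[
d\rho_\varepsilon(v)=\frac{\langle\nabla^{\mathbf E}v,v\rangle_{h_{\mathbf E}}}{(|v|^2_{h_{\mathbf E}}+\varepsilon^2)^{1/2}},
\qquad
|d\rho_\varepsilon(v)|_g\le|\nabla^{\mathbf E}v|_{g\otimes h_{\mathbf E}}.
\]
This is the Kato-type inequality. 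It gives \(\|\rho_\varepsilon(v)\|_{W^{1,q}(M)}\le\|v\|_{W^{1,q}(\mathbf E)}\). We also record the pointwise bounds \(|\rho_\varepsilon(v)-\rho_\varepsilon(w)|\le\bigl||v|-|w|\bigr|\le|v-w|\) and \(0\le|v|-\rho_\varepsilon(v)\le\varepsilon\).

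The first assertion, \(|u|_{h_{\mathbf E}}\in W^{1,q}(M)\), is proved as follows.
\begin{enumerate}
\item By \cite[Theorem 8]{VelazquezSandoval}, choose \(u_k\in\Gamma(\mathbf E)\cap W^{1,q}(\mathbf E)\) with \(u_k\to u\) in \(W^{1,q}(\mathbf E)\), and pass to a subsequence converging a.e., with \(\nabla u_k\to\nabla_w u\) a.e.
\item Put \(\varepsilon=\varepsilon_k=1/k\) and \(f_k:=\rho_{\varepsilon_k}(u_k)\). Then \(f_k\to|u|\) in \(L^q(M)\), and \((f_k)\) is bounded in \(W^{1,q}(M)\).
\item If \(q>1\), reflexivity gives a weakly convergent subsequence, and its limit must be \(|u|\). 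Hence \(|u|\in W^{1,q}(M)\), with weak gradient bounded by \(|\nabla_w u|\).
\item For \(q=1\), I would instead identify the gradient directly. The gradients \(df_k\) converge a.e. to \(G:=\langle\nabla_w u,u\rangle/|u|\) on \(\{u\neq0\}\) and to \(0\) elsewhere; they are dominated by \(|\nabla u_k|\), which converges in \(L^1\). Generalized dominated convergence then gives \(df_k\to G\) in \(L^1\). This yields \(|u|\in W^{1,1}\), and in fact \(f_k\to|u|\) strongly in \(W^{1,1}(M)\).
\end{enumerate}

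The vanishing of the gradient term on \(\{u=0\}\) needs care. At points with \(u_k(x)=0\) the gradient \(df_k(x)\) is exactly \(0\). At points of \(\{u=0\}\) where \(u_k(x)\neq0\), the bound \(|df_k|\le|\nabla u_k|\) does not force the limit to vanish. The Vitali-type argument must therefore be run on \(\{u\neq0\}\) only. On \(\{u=0\}\), I would use that \(|u|\in W^{1,1}\) has zero weak gradient a.e. on its zero set, which is the standard Stampacchia property for scalar Sobolev functions. So the safest route is: obtain \(|u|\in W^{1,q}\) by the weak-compactness or Stampacchia argument, and then use the scalar facts.

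For the trace identity I avoid needing strong \(W^{1,q}\) convergence. For smooth \(v\), the trace is pointwise restriction, so the scalar trace of \(\rho_\varepsilon(v)\) on \(\partial M\) equals \(\rho_\varepsilon(v|_{\partial M})\). The argument then runs in three limits.
\begin{enumerate}
\item Since \(\operatorname{Tr}\) is continuous in \(L^q(\partial M)\) and \(u_k\to u\) in \(W^{1,q}(\mathbf E)\), Theorem \ref{thm:integer_order_trace_vector_bundle} gives \(u_k|_{\partial M}\to\operatorname{Tr}_0u\) in \(L^q(\mathbf E|_{\partial M})\).
\item By the pointwise bounds recorded above, \(\rho_{\varepsilon_k}(u_k|_{\partial M})\to|\operatorname{Tr}_0u|\) in \(L^q(\partial M)\).
\item The scalar trace \(W^{1,q}(M)\to L^q(\partial M)\) is linear and continuous, hence weakly continuous, and \(f_k\rightharpoonup|u|\) weakly in \(W^{1,q}\) when \(q>1\). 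Therefore \(\operatorname{Tr}f_k\rightharpoonup\operatorname{Tr}|u|\) weakly in \(L^q(\partial M)\). Comparing limits gives \(\operatorname{Tr}|u|=|\operatorname{Tr}_0u|\).
\end{enumerate}

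The main obstacle is the case \(q=1\), where weak compactness fails. For it I would instead establish strong convergence \(f_k\to|u|\) in \(W^{1,1}\), handling the set \(\{u=0\}\) as discussed in the first part.
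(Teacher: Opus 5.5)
Your argument for \(q>1\) is sound and runs parallel to the paper's: approximate, bound the gradients by Kato, extract a weak limit in \(W^{1,q}(M)\), and compare the weak limit of the traces with the strong limit of \(|\operatorname{Tr}_0u_k|\). The regularization \(\rho_\varepsilon\) is a small improvement. Since \(f_k\) is smooth up to \(\partial M\), the identity \(\operatorname{Tr}f_k=\rho_{\varepsilon_k}(u_k|_{\partial M})\) holds by the definition of the trace. By contrast, the paper's identity \(\operatorname{Tr}(|u_j|)=|\operatorname{Tr}_0u_j|\) concerns the merely Lipschitz function \(|u_j|\), and so tacitly uses that traces of continuous \(W^{1,q}\) functions are restrictions.

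The gap is in the case \(q=1\), specifically your treatment of \(\{u=0\}\).
\begin{itemize}
\item Your fallback, invoking the Stampacchia property of \(|u|\), is circular. It presupposes \(|u|\in W^{1,1}(M)\), which is exactly what must be proved once weak compactness is unavailable.
\item Even granting it, \(d|u|=0\) a.e. on \(\{u=0\}\) gives no control of \(\int_{\{u=0\}}|df_k|_g\,d\lambda_g\). That control is what you need for any convergence of \(f_k\) in \(W^{1,1}\), and hence for passing the trace to the limit.
\end{itemize}

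The missing idea is to apply Stampacchia to the section \(u\) itself. In a local frame, \(\nabla_w u=du^a\otimes e_a+u^a\nabla e_a\). Since \(\{u=0\}\subseteq\{u^a=0\}\) for each \(a\), the scalar Stampacchia property gives \(\nabla_w u=0\) a.e. on \(\{u=0\}\). Then \(|df_k|_g\le|\nabla u_k|_{g\otimes h_{\mathbf E}}\to|\nabla_w u|_{g\otimes h_{\mathbf E}}=0\) a.e. there. So your original claim in step 4 was correct, the generalized dominated convergence argument goes through, and you get \(f_k\to|u|\) strongly in \(W^{1,1}(M)\), which is more than needed.

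Alternatively, follow the paper. Since \(|df_k|_g\le|\nabla u_k|_{g\otimes h_{\mathbf E}}\) and the right-hand side converges in \(L^1\), the family \((df_k)\) is uniformly integrable. Dunford--Pettis and Eberlein--\v{S}mulian then give a weakly convergent subsequence in \(L^1(T^*M)\). Weak convergence in \(W^{1,1}(M)\) is all the trace step needs, so your \(q>1\) argument then applies verbatim with no analysis of the zero set.
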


\begin{proof}
By the weak Kato inequality, $|u|_{h_{\mathbf E}}\in W^{1,q}(M)$. Let
$(u_j)\subseteq\Gamma(\mathbf E)\cap W^{1,q}(\mathbf E)$ satisfy
$u_j\to u$ in $W^{1,q}(\mathbf E)$. The reverse triangle inequality
gives $|u_j|_{h_{\mathbf E}}\to|u|_{h_{\mathbf E}}$ in $L^q(M)$.
Applying the weak Kato inequality to each $u_j$, we also have
$|u_j|_{h_{\mathbf E}}\in W^{1,q}(M)$ and
\[
\left|\nabla_w|u_j|_{h_{\mathbf E}}\right|_g
\leq
|\nabla_w^{\mathbf E}u_j|_{g,h_{\mathbf E}}
\]
almost everywhere. Since
$\nabla_w^{\mathbf E}u_j\to\nabla_w^{\mathbf E}u$ in
$L^q(T^*M\otimes\mathbf E)$, the sequence
$\bigl(\nabla_w|u_j|_{h_{\mathbf E}}\bigr)$ is bounded in $L^q(T^*M)$.

If $1<q<\infty$, reflexivity provides a subsequence and
$Z\in L^q(T^*M)$ such that
\[
\nabla_w|u_j|_{h_{\mathbf E}}
\rightharpoonup Z
\qquad\text{in }L^q(T^*M).
\]
If $q=1$, the convergence
$|\nabla_w^{\mathbf E}u_j|_{g,h_{\mathbf E}}\to
|\nabla_w^{\mathbf E}u|_{g,h_{\mathbf E}}$ in $L^1(M)$ implies that
$\{|\nabla_w^{\mathbf E}u_j|_{g,h_{\mathbf E}}\mid j\in\mathbb N\}$ is
uniformly integrable. The weak Kato inequality then shows that
$\{\left|\nabla_w|u_j|_{h_{\mathbf E}}\right|_g\mid j\in\mathbb N\}$ is
uniformly integrable as well. Hence the Dunford--Pettis theorem for vector
bundles, applied to $T^*M$, together with the Eberlein--Šmulian theorem,
provides a subsequence and $Z\in L^1(T^*M)$ such that
\[
\nabla_w|u_j|_{h_{\mathbf E}}
\rightharpoonup Z
\qquad\text{in }L^1(T^*M).
\]

Thus, in either case, after passing to a subsequence,
$\nabla_w|u_j|_{h_{\mathbf E}}\rightharpoonup Z$ in $L^q(T^*M)$. For
every $\eta\in\Gamma_{c,\operatorname{Int}(M)}(T^*M)$, the definition of
the weak covariant derivative gives
\[
\int_M
\left\langle
\nabla_w|u_j|_{h_{\mathbf E}},\eta
\right\rangle_g\,d\lambda_g
=
\int_M
|u_j|_{h_{\mathbf E}}\nabla^*\eta\,d\lambda_g.
\]
Passing to the limit, using the weak convergence on the left and the
strong convergence $|u_j|_{h_{\mathbf E}}\to|u|_{h_{\mathbf E}}$ in
$L^q(M)$ on the right, gives
\[
\int_M\langle Z,\eta\rangle_g\,d\lambda_g
=
\int_M|u|_{h_{\mathbf E}}\nabla^*\eta\,d\lambda_g.
\]
Hence
\[
Z=\nabla_w|u|_{h_{\mathbf E}}.
\]
Since $|u_j|_{h_{\mathbf E}}\to|u|_{h_{\mathbf E}}$ strongly in
$L^q(M)$ and
$\nabla_w|u_j|_{h_{\mathbf E}}\rightharpoonup
\nabla_w|u|_{h_{\mathbf E}}$ in $L^q(T^*M)$, the canonical isometric
embedding of $W^{1,q}(M)$ into
$L^q(M)\times L^q(T^*M)$ yields
\[
|u_j|_{h_{\mathbf E}}
\rightharpoonup
|u|_{h_{\mathbf E}}
\qquad\text{in }W^{1,q}(M).
\]

By Theorem~\ref{thm:integer_order_trace_vector_bundle}, applied to the
trivial line bundle and to $\mathbf E$, respectively, the operators
$\operatorname{Tr}:W^{1,q}(M)\to L^q(\partial M)$ and
$\operatorname{Tr}_0:W^{1,q}(\mathbf E)\to
L^q(\mathbf E|_{\partial M})$ are linear and continuous. Therefore
\[
\operatorname{Tr}(|u_j|_{h_{\mathbf E}})
\rightharpoonup
\operatorname{Tr}(|u|_{h_{\mathbf E}})
\qquad\text{in }L^q(\partial M),
\]
while
$\operatorname{Tr}_0u_j\to\operatorname{Tr}_0u$ in
$L^q(\mathbf E|_{\partial M})$. The reverse triangle inequality then
implies
\[
|\operatorname{Tr}_0u_j|_{h_{\mathbf E}}
\longrightarrow
|\operatorname{Tr}_0u|_{h_{\mathbf E}}
\qquad\text{in }L^q(\partial M).
\]
Since $u_j$ is smooth, the trace agrees with the restriction to the
boundary and therefore
\[
\operatorname{Tr}(|u_j|_{h_{\mathbf E}})
=
|\operatorname{Tr}_0u_j|_{h_{\mathbf E}}
\]
almost everywhere on $\partial M$ for every $j$. The latter strong
convergence also implies weak convergence, so the same sequence converges
weakly both to $\operatorname{Tr}(|u|_{h_{\mathbf E}})$ and to
$|\operatorname{Tr}_0u|_{h_{\mathbf E}}$. By uniqueness of weak limits,
\[
\operatorname{Tr}(|u|_{h_{\mathbf E}})
=
|\operatorname{Tr}_0u|_{h_{\mathbf E}}
\]
almost everywhere on $\partial M$.
\end{proof}

By using the trace characterization of \(W^{1,q}_0(\textbf{E})\), we obtain the following Poincaré inequality.

\begin{theorem}[Poincaré inequality on \(W^{1,q}_0(\textbf{E})\)]\label{teo:poincare}
Let \((M,g)\) be a compact Riemannian manifold with smooth boundary, and assume that every connected component of \(M\) has nonempty boundary. Let \(\textbf{E}\to M\) be a smooth vector bundle of finite rank endowed with a fiber metric \(h_\textbf{E}\) and a compatible connection \(\nabla^\textbf{E}\). If \(1\leq q<n\), then there exists a constant \(C_P>0\), depending only on \(M,g,\textbf{E},h_\textbf{E},\nabla^\textbf{E}\) and \(q\), such that \(\displaystyle \|u\|_{L^q(\textbf{E})}\leq C_P\|\nabla^\textbf{E} u\|_{L^q(T^*M\otimes \textbf{E})}\) for every \(u\in W^{1,q}_0(\textbf{E})\). Consequently, \(\displaystyle \|u\|:=\left(\int_M|\nabla^\textbf{E} u|_{g\otimes h_\textbf{E}}^q\,d\lambda_g\right)^{1/q}\) defines a norm on \(W^{1,q}_0(\textbf{E})\) equivalent to the usual Sobolev norm.
\end{theorem}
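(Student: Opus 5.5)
We argue by contradiction, combining compactness with the trace characterization. The key device is the weak Kato inequality, which reduces the bundle problem to a scalar one.

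Suppose no such \(C_P\) exists. Then there are \(u_k\in W^{1,q}_0(\mathbf E)\) with \(\|u_k\|_{L^q(\mathbf E)}=1\) and \(\|\nabla^{\mathbf E}_w u_k\|_{L^q}\to0\). Set \(\rho_k:=|u_k|_{h_{\mathbf E}}\). By the weak Kato inequality, already used in Lemma \ref{lem:trace_norm_compatibility}, we have \(\rho_k\in W^{1,q}(M)\) and \(|\nabla_w\rho_k|_g\le|\nabla^{\mathbf E}_w u_k|\) almost everywhere. Hence \((\rho_k)\) is bounded in \(W^{1,q}(M)\), with \(\|\rho_k\|_{L^q(M)}=1\) and \(\nabla_w\rho_k\to0\) in \(L^q(T^*M)\).

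Next we place \(\rho_k\) in the zero-trace space. Since \(u_k\in W^{1,q}_0(\mathbf E)=\ker\mathcal T_1\) by Theorem \ref{thm:zero_trace_characterization_order_m_bundle}, we get \(\operatorname{Tr}_0u_k=0\). Lemma \ref{lem:trace_norm_compatibility} then gives \(\operatorname{Tr}(\rho_k)=0\). Applying Theorem \ref{thm:zero_trace_characterization_order_m_bundle} again, now to the trivial line bundle with \(m=1\), yields \(\rho_k\in W^{1,q}_0(M)\). By Lemma \ref{Embedding}(ii) for the trivial bundle, with \(\ell=q<q^*\), the embedding \(W^{1,q}_0(M)\hookrightarrow L^q(M)\) is compact. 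After passing to a subsequence, \(\rho_k\to\rho\) in \(L^q(M)\), so \(\|\rho\|_{L^q}=1\).

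We now identify the limit. Since \(\nabla_w\rho_k\to0\) in \(L^q\), the sequence \(\rho_k\) is Cauchy in \(W^{1,q}(M)\). Therefore \(\rho\in W^{1,q}_0(M)\), which is closed, and \(\nabla_w\rho=0\). A scalar Sobolev function with vanishing weak gradient is locally constant; this follows in coordinate charts from the Euclidean fact, and \(\rho\) is then constant on each connected component. By continuity of the trace, \(\operatorname{Tr}\rho=\lim\operatorname{Tr}\rho_k=0\). On every component the boundary is nonempty, and the trace of a constant \(c\) is \(c\), so \(c=0\) on each component. Hence \(\rho=0\), contradicting \(\|\rho\|_{L^q}=1\).

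The equivalence of norms follows directly. One gets \(\|u\|\le\|u\|_{W^{1,q}}\le(1+C_P^q)^{1/q}\|u\|\), and the Poincaré inequality makes \(\|\cdot\|\) definite.

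The main obstacle is the passage to the scalar problem. The step that makes it work is the combination of Kato's inequality with Lemma \ref{lem:trace_norm_compatibility}, which transfers the zero boundary condition from \(u_k\) to \(|u_k|\).

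We deliberately avoid arguing directly with \(u_k\). The limit would then satisfy \(\nabla^{\mathbf E}u=0\), making it a parallel section, and that route would require showing that a weakly parallel section is smooth with constant norm. The scalar reduction sidesteps this.

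A secondary point is local constancy of \(\rho\) with \(\nabla_w\rho=0\) on a manifold with boundary. This is standard: mollify in interior charts and use connectedness of the interior of each component.
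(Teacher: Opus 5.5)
Your proof is correct, but it takes the scalar reduction at a different point from the paper.

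\textbf{The paper's route.} It stays in the bundle until the end:
\begin{itemize}
\item it applies the compact embedding \(W^{1,q}_0(\mathbf E)\hookrightarrow L^q(\mathbf E)\) of Lemma~\ref{Embedding} to \(u_k\) itself;
\item it uses closedness of the weak covariant derivative to obtain a limit \(u\in W^{1,q}_0(\mathbf E)\) with \(\nabla^{\mathbf E}u=0\);
\item it needs only the easy inclusion \(W^{1,q}_0(\mathbf E)\subseteq\ker\mathcal T_1\) to get \(\operatorname{Tr}_0u=0\);
\item it applies the weak Kato inequality and Lemma~\ref{lem:trace_norm_compatibility} once, to that limit.
\end{itemize}

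\textbf{Your route.} You pass to \(\rho_k=|u_k|_{h_{\mathbf E}}\) before taking any limit, so everything afterwards is scalar. The cost is that you need the nontrivial inclusion \(\ker\operatorname{Tr}\subseteq W^{1,q}_0(M)\) of Theorem~\ref{thm:zero_trace_characterization_order_m_bundle} for the trivial line bundle. This is what lets you invoke the compact embedding as it is stated; Rellich compactness on all of \(W^{1,q}(M)\) would also do. The gain is that your argument really establishes
\[
\|u\|_{L^q(\mathbf E)}=\bigl\|\,|u|_{h_{\mathbf E}}\bigr\|_{L^q(M)}\le C_0\bigl\|d|u|_{h_{\mathbf E}}\bigr\|_{L^q(T^*M)}\le C_0\|\nabla^{\mathbf E}u\|_{L^q(T^*M\otimes\mathbf E)},
\]
where \(C_0\) is the scalar Poincaré constant of \((M,g)\) on \(W^{1,q}_0(M)\). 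Written directly in this form, it shows that \(C_P\) can be chosen independent of \(\mathbf E\), \(h_{\mathbf E}\) and \(\nabla^{\mathbf E}\). The paper's argument does not reveal this.

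\textbf{A correction to your closing remark.} The direct route does not require smoothness of weakly parallel sections. The paper simply applies the weak Kato inequality to the limit \(u\), which gives \(d|u|_{h_{\mathbf E}}=0\) weakly and hence \(|u|_{h_{\mathbf E}}\) constant on each component. That is exactly the step you perform for \(\rho\).
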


\begin{proof}
We argue by contradiction. Suppose that the estimate is false. Then there exists a sequence \((u_k)\subset W^{1,q}_0(\textbf{E})\) such that \(\displaystyle \|u_k\|_{L^q(\textbf{E})}=1\) and \(\displaystyle \|\nabla^\textbf{E} u_k\|_{L^q(T^*M\otimes \textbf{E})}\to0\). In particular, \((u_k)\) is bounded in \(W^{1,q}_0(\textbf{E})\). Since \(q<n\), we have \(\displaystyle q<q^*=\frac{nq}{n-q}\), and Lemma~\ref{Embedding} gives the compact embedding \(W^{1,q}_0(\textbf{E})\hookrightarrow L^q(\textbf{E})\). Hence, after passing to a subsequence, there exists \(u\in L^q(\textbf{E})\) such that \(u_k\to u\) strongly in \(L^q(\textbf{E})\).

On the other hand, \(\nabla^\textbf{E} u_k\to0\) strongly in \(L^q(T^*M\otimes \textbf{E})\). Since the weak covariant derivative is closed as an operator from \(L^q(\textbf{E})\) into \(L^q(T^*M\otimes \textbf{E})\), it follows that \(u\in W^{1,q}(\textbf{E})\) and \(\nabla^\textbf{E} u=0\). Therefore \(u_k\to u\) strongly in \(W^{1,q}(\textbf{E})\). Since \(W^{1,q}_0(\textbf{E})\) is closed in \(W^{1,q}(\textbf{E})\), we also have \(u\in W^{1,q}_0(\textbf{E})\). By Theorem~\ref{thm:zero_trace_characterization_order_m_bundle}, applied with \(m=1\), this means that \(\operatorname{Tr}_0u=0\) on \(\partial M\).

It remains to prove that \(u=0\). By the weak Kato inequality for metric connections, \(|u|_{h_\textbf{E}}\in W^{1,q}(M)\) and \(\displaystyle |d|u|_{h_\textbf{E}}|_g\leq|\nabla^\textbf{E} u|_{g\otimes h_\textbf{E}}\) almost everywhere. Since \(\nabla^\textbf{E} u=0\), we get \(d|u|_{h_\textbf{E}}=0\) weakly. Hence \(|u|_{h_\textbf{E}}\) is constant almost everywhere on each connected component of \(M\). Moreover, by Lemma~\ref{lem:trace_norm_compatibility}, \(\operatorname{Tr}(|u|_{h_\textbf{E}})=|\operatorname{Tr}_0u|_{h_\textbf{E}}=0\) on \(\partial M\). Since every connected component of \(M\) meets the boundary, the constant value of \(|u|_{h_\textbf{E}}\) on each component must be zero. Thus, \(u=0\) almost everywhere on \(M\).

This contradicts the strong convergence \(u_k\to u\) in \(L^q(\textbf{E})\), because \(\displaystyle \|u\|_{L^q(\textbf{E})}=\lim_{k\to\infty}\|u_k\|_{L^q(\textbf{E})}=1\). Therefore, the desired constant \(C_P>0\) exists. The equivalence between \(\|\cdot\|\) and the usual \(W^{1,q}\)-norm on \(W^{1,q}_0(\textbf{E})\) follows immediately from the Poincaré inequality.
\end{proof}

\section{The variational framework and some technical lemmas}\label{sec:variational_framework}

Let \(X:=W^{1,q}_0(\textbf{E})\). In view of Theorem~\ref{teo:poincare}, we shall use on \(X\) the equivalent norm
\[
\|u\|_X
:=
\left(
\int_M|\nabla^\textbf{E} u|_{g\otimes h_\textbf{E}}^q\,d\lambda_g
\right)^{1/q}.
\]
In what follows, for \(u\in X\), the symbol \(\nabla^\textbf{E} u\) denotes its weak covariant derivative.

\begin{definition}
A section \(u\in X=W^{1,p}_0(\mathbf E)\) is said to be a
\emph{weak solution} of \eqref{ProblemP} if
\[
\int_M
a\!\left(|\nabla^{\mathbf E}u|_{g\otimes h_{\mathbf E}}^{p}\right)
|\nabla^{\mathbf E}u|_{g\otimes h_{\mathbf E}}^{p-2}
\left\langle
\nabla^{\mathbf E}u,\nabla^{\mathbf E}\varphi
\right\rangle_{g\otimes h_{\mathbf E}}
\,d\lambda_g
=
\int_M
\left\langle
f_x(u),\varphi
\right\rangle_{h_{\mathbf E}}
\,d\lambda_g,
\]
for every section \(\varphi\in W^{1,p}_0(\mathbf E)\).
\end{definition}

\begin{lemma}\label{lem:fiberwise_growth_estimates}
Assume that \ref{Bi2f_1} and \ref{Bi2f_2} hold. Then, for every \(\varepsilon>0\), there exists \(C_\varepsilon>0\) such that
\begin{equation}\label{estimative_bundle_f}
|f_x(\xi)|_{h_\textbf{E}}
\leq
\varepsilon |\xi|_{h_\textbf{E}}^{p-1}
+
C_\varepsilon |\xi|_{h_\textbf{E}}^{r-1}
\end{equation}
for every \(x\in M\) and every \(\xi\in \textbf{E}_x\). Moreover, 
\begin{equation}\label{estimative_bundle_F}
|\mathcal F_x(\xi)|
\leq
\frac{\varepsilon}{p}|\xi|_{h_\textbf{E}}^{p}
+
\frac{C_\varepsilon}{r}|\xi|_{h_\textbf{E}}^{r}
\end{equation}
for every \(x\in M\) and every \(\xi\in \textbf{E}_x\).
\end{lemma}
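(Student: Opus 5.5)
The plan is the standard splitting of the fiber into a small-norm region and a large-norm region, using the uniformity in \(x\) from \ref{Bi2f_1} and the global bound \ref{Bi2f_2}.

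\emph{Small norms.} Fix \(\varepsilon>0\). By \ref{Bi2f_1}, uniformly in \(x\in M\), there is \(\delta\in(0,1)\) such that
\[
|f_x(\xi)|_{h_\textbf{E}}\leq\varepsilon|\xi|_{h_\textbf{E}}^{p-1}
\qquad\text{whenever }0<|\xi|_{h_\textbf{E}}\leq\delta .
\]
At \(\xi=0_x\) the same inequality holds: letting \(|\xi|\to0\) in \ref{Bi2f_1} and using continuity of \(f_x\) gives \(f_x(0_x)=0\). The only point to stress is that uniformity in \(x\) is exactly what makes \(\delta\) independent of the base point.

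\emph{Large norms.} For \(|\xi|_{h_\textbf{E}}\geq\delta\), \ref{Bi2f_2} gives
\[
|f_x(\xi)|_{h_\textbf{E}}\leq C_f\bigl(1+|\xi|^{r-1}\bigr).
\]
Since \(1\leq\delta^{1-r}|\xi|^{r-1}\) in this range, the right-hand side is at most
\[
C_f\bigl(\delta^{1-r}+1\bigr)|\xi|^{r-1}.
\]
Setting \(C_\varepsilon:=C_f(\delta^{1-r}+1)\) and combining the two regions yields \eqref{estimative_bundle_f} for all \(x\) and \(\xi\).

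\emph{The potential.} For \eqref{estimative_bundle_F}, use the representation \(\mathcal F_x(\xi)=\int_0^1\langle f_x(t\xi),\xi\rangle_{h_\textbf{E}}\,dt\) established in the introduction, which relies on \(\mathcal F_x(0_x)=0\). By Cauchy--Schwarz and \eqref{estimative_bundle_f},
\[
|\mathcal F_x(\xi)|\leq\int_0^1\bigl(\varepsilon t^{p-1}|\xi|^{p}+C_\varepsilon t^{r-1}|\xi|^{r}\bigr)\,dt
=\frac{\varepsilon}{p}|\xi|^p+\frac{C_\varepsilon}{r}|\xi|^r .
\]

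There is no genuine obstacle here. The only care needed is to keep the constants independent of \(x\), which is why both \ref{Bi2f_1} (uniformly in \(x\)) and \ref{Bi2f_2} (with a global constant) are invoked.
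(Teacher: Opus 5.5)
Your proof is correct and follows the paper's argument essentially step by step. It uses the same split at a radius \(\delta\in(0,1)\) given by the uniform limit in \ref{Bi2f_1}, the same constant \(C_\varepsilon=C_f(1+\delta^{1-r})\) on the region \(|\xi|_{h_\textbf{E}}\geq\delta\), and the same integral representation of \(\mathcal F_x\) for \eqref{estimative_bundle_F}, which, as in the paper, depends on the normalization \(\mathcal F_x(0_x)=0\) obtained in the introduction from \ref{Bi2f_1} together with \ref{Bi2f_3}, a dependence you correctly flag.
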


\begin{proof}
Let \(\varepsilon>0\) be fixed. By \ref{Bi2f_1}, there exists \(\delta\in(0,1)\) such that $|f_x(\xi)|_{h_\textbf{E}}
\leq \varepsilon |\xi|_{h_\textbf{E}}^{p-1}$
whenever \(x\in M\) and \(0<|\xi|_{h_\textbf{E}}\leq\delta\). In particular, since each \(f_x\) is continuous in the fiber variable, letting \(\xi\to0_x\) gives \(f_x(0_x)=0\). Hence, the same estimate also holds for \(\xi=0_x\). On the other hand, if \(|\xi|_{h_\textbf{E}}\geq\delta\), then \ref{Bi2f_2} gives
\[
|f_x(\xi)|_{h_\textbf{E}}
\leq
C_f(1+|\xi|_{h_\textbf{E}}^{r-1})
\leq
C_f(1+\delta^{1-r})|\xi|_{h_\textbf{E}}^{r-1}.
\]
Taking \(C_\varepsilon\geq C_f(1+\delta^{1-r})\), we obtain \eqref{estimative_bundle_f} on the region \(|\xi|_{h_\textbf{E}}\geq\delta\), while the previous smallness estimate gives it on the region \(|\xi|_{h_\textbf{E}}\leq\delta\). This proves \eqref{estimative_bundle_f} for every \(x\in M\) and every \(\xi\in \textbf{E}_x\).

We now prove the estimate for the potential. Since \(\mathcal F_x(0_x)=0\) and \(f_x\) is the fiber gradient of \(\mathcal F_x\), for every \(\xi\in \textbf{E}_x\) one has $\mathcal F_x(\xi)=\int_0^1
\left\langle f_x(t\xi),\xi\right\rangle_{h_\textbf{E}}\,dt$.
Therefore, using \eqref{estimative_bundle_f},
\[
\begin{aligned}
|\mathcal F_x(\xi)|
\leq
\int_0^1
|f_x(t\xi)|_{h_\textbf{E}}\,|\xi|_{h_\textbf{E}}\,dt\leq
\int_0^1
\left(
\varepsilon t^{p-1}|\xi|_{h_\textbf{E}}^{p-1}
+
C_\varepsilon t^{r-1}|\xi|_{h_\textbf{E}}^{r-1}
\right)
|\xi|_{h_\textbf{E}}\,dt =
\frac{\varepsilon}{p}|\xi|_{h_\textbf{E}}^{p}
+
\frac{C_\varepsilon}{r}|\xi|_{h_\textbf{E}}^{r}.
\end{aligned}
\]
This proves \eqref{estimative_bundle_F}.
\end{proof}

\noindent The variational functional associated with problem \eqref{ProblemP} is
\begin{equation}\label{energy_functional}
I(u)
:=
\frac1p
\int_M
A\!\left(|\nabla^{\mathbf E}u|_{g\otimes h_{\mathbf E}}^{p}\right)
\,d\lambda_g
-
\int_M
\mathcal{F}_x(u)
\,d\lambda_g,
\qquad u\in X.
\end{equation}
It follows from assumption \ref{Bi2a_1} and Lemmas~\ref{Embedding} and
\ref{lem:fiberwise_growth_estimates} that \(I\) is well defined. Moreover, for every test section \(\varphi\in X\),
\[
\begin{aligned}
\langle I'(u),\varphi\rangle
&=
\int_M
a\!\left(|\nabla^{\mathbf E}u|_{g\otimes h_{\mathbf E}}^{p}\right)
|\nabla^{\mathbf E}u|_{g\otimes h_{\mathbf E}}^{p-2}
\left\langle
\nabla^{\mathbf E}u,
\nabla^{\mathbf E}\varphi
\right\rangle_{g\otimes h_{\mathbf E}}
\,d\lambda_g  
-
\int_M
\left\langle
f_x(u),
\varphi
\right\rangle_{h_{\mathbf E}}
\,d\lambda_g.
\end{aligned}
\]
In view of Proposition~\ref{prop:energy_functional_C1}, the functional
\(I:X\to\mathbb{R}\) is of class \(C^{1}\). Moreover, its critical points are exactly the weak solutions of \eqref{ProblemP}. The corresponding Nehari manifold is defined by
\[
\mathcal N
=
\left\{
u\in X\setminus\{0\}:
\langle I'(u),u\rangle=0
\right\}.
\]
Equivalently,  \(u\in\mathcal N\) if and only if \(u\neq0\) and
\begin{equation}\label{nehari_identity}
\int_M
a(|\nabla^\textbf{E} u|_{g\otimes h_\textbf{E}}^{p})
|\nabla^\textbf{E} u|_{g\otimes h_\textbf{E}}^{p}\,d\lambda_g
=
\int_M
\langle f(u),u\rangle_{h_\textbf{E}}\,d\lambda_g.
\end{equation}
We define the ground state energy level by 
\begin{equation}\label{Nehari_Level}
    c:=\displaystyle\inf_{u\in\mathcal N}I(u)
\end{equation}

\begin{lemma}\label{lem:lower_bound_F_bundle}
Assume that \ref{Bi2f_3} holds. Then there exist constants \(D>0\) and \(D'\geq0\) such that
\begin{equation}\label{lower_bound_F_bundle}
\mathcal F_x(\xi)\geq D|\xi|_{h_\textbf{E}}^{\theta}-D'
\end{equation}
for every \(x\in M\) and every \(\xi\in \textbf{E}_x\).
\end{lemma}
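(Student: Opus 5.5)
The plan is the classical Ambrosetti--Rabinowitz argument carried out along the rays \(t\mapsto t\xi\) in each fiber, with uniformity in \(x\) obtained from compactness of the unit sphere bundle. Fix \(x\in M\) and a unit vector \(\omega\in\mathbf E_x\), \(|\omega|_{h_{\mathbf E}}=1\). Set \(\varphi(t):=\mathcal F_x(t\omega)\) for \(t>0\). Since \(\mathcal F_x\) is \(C^1\) in the fiber variable, \(\varphi\) is \(C^1\) with \(\varphi'(t)=\langle f_x(t\omega),\omega\rangle_{h_{\mathbf E}}\). Then \ref{Bi2f_3} gives
\[
t\varphi'(t)=\langle f_x(t\omega),t\omega\rangle_{h_{\mathbf E}}\ge\theta\varphi(t),
\qquad \varphi(t)>0 .
\]
Hence \(\frac{d}{dt}\bigl(t^{-\theta}\varphi(t)\bigr)\ge0\) on \((0,\infty)\). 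For \(t\ge1\) this yields \(\mathcal F_x(t\omega)\ge \mathcal F_x(\omega)\,t^{\theta}\).

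Next I would take \(D:=\min\{\mathcal F_x(\omega): x\in M,\ |\omega|_{h_{\mathbf E}}=1\}\). The unit sphere bundle \(S\mathbf E\) is compact, because \(M\) is compact and the fibers have finite rank. The map \(\mathcal F\) is continuous on \(\mathbf E\), as shown right after the hypotheses. So this minimum is attained, and by \ref{Bi2f_3} it is strictly positive, giving \(D>0\). Consequently \(\mathcal F_x(\xi)\ge D|\xi|^\theta_{h_{\mathbf E}}\) whenever \(|\xi|_{h_{\mathbf E}}\ge1\).

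For \(|\xi|_{h_{\mathbf E}}\le1\) I use \(\mathcal F_x(\xi)\ge0\): it is positive for \(\xi\ne0\) by \ref{Bi2f_3}, and \(\mathcal F_x(0_x)=0\). This gives \(\mathcal F_x(\xi)\ge D|\xi|^\theta_{h_{\mathbf E}}-D\). With \(D':=D\), both regimes are covered, which proves \eqref{lower_bound_F_bundle}.

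The only delicate points are the following. First, one needs the uniform positive lower bound on the unit sphere bundle; this relies on the already established continuity of \(\mathcal F\) on the total space and on compactness of \(S\mathbf E\). Second, the differential inequality must be justified. Justifying it only requires differentiability of \(\varphi\) along rays, which follows from the \(C^1\) fiber hypothesis, so no genuine obstacle is expected.
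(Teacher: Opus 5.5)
Your proof is correct and follows essentially the same route as the paper: you derive monotonicity of $t\mapsto t^{-\theta}\mathcal F_x(t\omega)$ from \ref{Bi2f_3}, then take a positive minimum $D$ of $\mathcal F$ over the compact unit sphere bundle. The only difference is cosmetic: on $\{|\xi|_{h_{\mathbf E}}\le1\}$ you use $\mathcal F\ge0$ to take $D'=D$ explicitly, whereas the paper defines $D'$ as the maximum of $D|\xi|_{h_{\mathbf E}}^{\theta}-\mathcal F_{\pi(\xi)}(\xi)$ over the compact unit ball bundle, a quantity that is in fact at most $D$ for the same reason.
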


\begin{proof}
Let \(\theta\) be the exponent given by \ref{Bi2f_3}. Fix \(x\in M\) and \(\eta\in \textbf{E}_x\setminus\{0_x\}\). For \(t>0\), define \(\displaystyle H_{x,\eta}(t):=\frac{\mathcal F_x(t\eta)}{t^\theta}\). Since \(\mathcal F_x(t\eta)>0\), the function \(\log H_{x,\eta}\) is well defined. Moreover,
\[
\frac{d}{dt}\log H_{x,\eta}(t)
=
\frac{\langle f_x(t\eta),\eta\rangle_{h_\textbf{E}}}{\mathcal F_x(t\eta)}
-
\frac{\theta}{t}
=
\frac{\langle f_x(t\eta),t\eta\rangle_{h_\textbf{E}}-\theta\mathcal F_x(t\eta)}
{t\mathcal F_x(t\eta)}.
\]
By \ref{Bi2f_3}, the numerator is nonnegative, so \(H_{x,\eta}\) is nondecreasing on \((0,+\infty)\).

Consider the unit sphere bundle $S(\textbf{E}):=\{\eta\in \textbf{E}\mid |\eta|_{h_\textbf{E}}=1\}$. Since \(M\) is compact and \(\textbf{E}\) has finite rank, \(S(\textbf{E})\) is compact. The map \(\eta\mapsto\mathcal F_{\pi(\eta)}(\eta)\) is continuous on \(S(\textbf{E})\), and \(\mathcal F_{\pi(\eta)}(\eta)>0\) for every \(\eta\in S(\textbf{E})\) by \ref{Bi2f_3}. Hence $D:=\min_{\eta\in S(\textbf{E})}\mathcal F_{\pi(\eta)}(\eta)>0$. If \(|\xi|_{h_\textbf{E}}\ge1\), write \(\xi=\rho\eta\), where \(\rho:=|\xi|_{h_\textbf{E}}\ge1\) and \(|\eta|_{h_\textbf{E}}=1\). Since \(H_{x,\eta}\) is nondecreasing, 
\[
\mathcal F_x(\xi)=\mathcal F_x(\rho\eta)\ge\rho^\theta\mathcal F_x(\eta)\ge D|\xi|_{h_\textbf{E}}^{\theta}.
\]

It remains to consider the case \(|\xi|_{h_\textbf{E}}\le1\). Since the closed unit ball bundle $\{\xi\in \textbf{E}\mid |\xi|_{h_\textbf{E}}\le1\}$ is compact, the continuous function \(\xi\mapsto D|\xi|_{h_\textbf{E}}^{\theta}-\mathcal F_{\pi(\xi)}(\xi)\) attains its maximum. Define $D':=\max_{|\xi|_{h_\textbf{E}}\le1}
\Big(D|\xi|_{h_\textbf{E}}^{\theta}-\mathcal F_{\pi(\xi)}(\xi)\Big)$. Since \(0_x\) belongs to the closed unit ball bundle and \(\mathcal F_x(0_x)=0\), we have $D'\ge D|0_x|_{h_\textbf{E}}^{\theta}-\mathcal F_x(0_x)=0$. Therefore, $\mathcal F_x(\xi)\ge D|\xi|_{h_\textbf{E}}^{\theta}-D'$ for every \(\xi\in \textbf{E}_x\) with \(|\xi|_{h_\textbf{E}}\le1\). Combining this estimate with the previous one proves \eqref{lower_bound_F_bundle}.
\end{proof}

A nontrivial critical point \(u\in X\) of \(I\) such that \(I(u)=c\) will be called a ground state solution of \eqref{ProblemP}.

\noindent In the next result we prove that \(\mathcal N\) is nonempty and that the functional \(I\) is bounded from below on \(\mathcal N\).

\begin{lemma}\label{Nehari_projection}
For each $u\in X\setminus\{0\}$, there exists a unique $t_u>0$ such that
$t_u u\in\mathcal N$. Moreover, the map
$\gamma_u:(0,+\infty)\to\mathbb R$, defined by $\gamma_u(t):=I(tu)$, is
strictly increasing on $(0,t_u)$ and strictly decreasing on
$(t_u,+\infty)$. Consequently, $t_u$ is the unique global maximum point of
$\gamma_u$. In addition, $I(u)>0$ for every $u\in\mathcal N$.
\end{lemma}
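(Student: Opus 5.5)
Fix $u\in X\setminus\{0\}$ and study $\gamma_u(t)=I(tu)$ on $(0,+\infty)$. By Proposition~\ref{prop:energy_functional_C1}, $\gamma_u$ is $C^1$ and
\[
\gamma_u'(t)=\langle I'(tu),u\rangle
=\frac1t\left(\int_M a(t^p|\nabla^{\mathbf E}u|^p)\,t^p|\nabla^{\mathbf E}u|^p\,d\lambda_g-\int_M\langle f(tu),tu\rangle_{h_{\mathbf E}}\,d\lambda_g\right),
\]
where here and below $|\cdot|$ abbreviates $|\cdot|_{g\otimes h_{\mathbf E}}$. Hence $tu\in\mathcal N$ exactly when $\gamma_u'(t)=0$. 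The plan is to rewrite this as $t^{q-1}\bigl(\Phi_u(t)-\Psi_u(t)\bigr)$, with
\[
\Phi_u(t):=\int_M\frac{a(t^p|\nabla^{\mathbf E}u|^p)\,t^p|\nabla^{\mathbf E}u|^p}{t^q}\,d\lambda_g,
\qquad
\Psi_u(t):=\int_M\frac{\langle f(tu),tu\rangle_{h_{\mathbf E}}}{t^q}\,d\lambda_g .
\]

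\emph{Monotonicity.} On $\{\nabla^{\mathbf E}u\neq0\}$, set $\rho=|\nabla^{\mathbf E}u|$. The integrand of $\Phi_u$ equals $\rho^q\,a((t\rho)^p)/(t\rho)^{q-p}$, which is nonincreasing in $t$ by \ref{Bi2a_2}. So $\Phi_u$ is nonincreasing. On $\{u\neq0\}$, the integrand of $\Psi_u$ is strictly increasing in $t$ by \ref{Bi2f_4}. This set has positive measure: otherwise $u=0$ a.e., contradicting $u\neq0$. Therefore $\Psi_u$ is strictly increasing, and $\Phi_u-\Psi_u$ is strictly decreasing. This gives at most one zero.

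\emph{Signs at the ends.} Existence of a zero will follow from the intermediate value theorem once we know continuity of $\Phi_u-\Psi_u$ (which follows from continuity of $\gamma_u'$) and the behaviour near $0$ and $\infty$.

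Near $0$, use \ref{Bi2a_1}, Lemma~\ref{lem:fiberwise_growth_estimates} and Lemma~\ref{Embedding} together with Remark~\ref{embedding_boundary}. These give
\[
t\gamma_u'(t)\ge k_1t^p\|\nabla^{\mathbf E}u\|_{L^p}^p-\varepsilon t^p\|u\|_{L^p}^p-C_\varepsilon t^r\|u\|_{L^r}^r .
\]
Poincaré's inequality (Theorem~\ref{teo:poincare}, with exponent $p$) lets us choose $\varepsilon$ small. Since $r>q\ge p$, it follows that $\gamma_u'>0$ for small $t$. A subtlety is the case $k_2$ versus $p<q$, but $k_1>0$ suffices here.

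Near $\infty$, the key step is \ref{Bi2f_3}, which gives $\langle f(\xi),\xi\rangle\ge\theta\mathcal F(\xi)$. Combined with Lemma~\ref{lem:lower_bound_F_bundle}, this yields
\[
\int_M\langle f(tu),tu\rangle\ge\theta D t^\theta\|u\|_{L^\theta}^\theta-\theta D'|M| .
\]
The left side of $t\gamma_u'(t)$ is at most $k_3t^p\|\nabla^{\mathbf E}u\|_{L^p}^p+k_4t^q\|u\|_X^q$. Since $\theta>q$, we get $\gamma_u'(t)<0$ for large $t$. So there is a unique $t_u$, with $\gamma_u'>0$ on $(0,t_u)$ and $\gamma_u'<0$ on $(t_u,\infty)$. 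This gives the claimed strict monotonicity of $\gamma_u$ and that $t_u$ is the global maximum point.

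\emph{Positivity on $\mathcal N$.} Let $u\in\mathcal N$. Then $t_u=1$, and by the above $I(u)=\gamma_u(1)>\gamma_u(t)$ for all $t\in(0,1)$. Moreover $\gamma_u(t)\to I(0)=0$ as $t\to0^+$, by continuity of $I$ and $A(0)=0$. Hence $I(u)>0$. As a cross-check, one can also argue directly: by \eqref{eq:A_lower_from_monotonicity} and \ref{Bi2f_3},
\[
I(u)\ge\Bigl(\tfrac1q-\tfrac1\theta\Bigr)\int_M a(|\nabla^{\mathbf E}u|^p)|\nabla^{\mathbf E}u|^p\,d\lambda_g>0 .
\]

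The main obstacle I expect is the near-zero sign. It requires handling the mismatch between the norm of $X$ (a $q$-norm) and the $p$-terms, which needs the $p$-Poincaré inequality on $W^{1,p}_0$ and the embedding $X\hookrightarrow W^{1,p}_0$.
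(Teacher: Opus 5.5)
Your proof is correct, but it is organized differently from the paper's.

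\textbf{Your route.} You factor $\gamma_u'(t)=t^{q-1}\bigl(\Phi_u(t)-\Psi_u(t)\bigr)$. By \ref{Bi2a_2} and \ref{Bi2f_4}, $\Phi_u-\Psi_u$ is strictly decreasing on all of $(0,+\infty)$. You then find the sign of $\gamma_u'$ itself near $0$ and near $+\infty$, and conclude by the intermediate value theorem. Existence, uniqueness and the sign pattern of $\gamma_u'$ all come out of this single monotone-function argument.

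\textbf{The paper's route.} The paper estimates $\gamma_u$ rather than $\gamma_u'$. It shows $\gamma_u(t)\geq C_1t^p-C_2t^r$ via \eqref{eq:A_lower_from_monotonicity}, and $\gamma_u(t)\to-\infty$ via $A(\rho^p)\leq C(\rho^p+\rho^q)$ and Lemma~\ref{lem:lower_bound_F_bundle} alone. It obtains $t_u$ as a global maximum point. It uses \ref{Bi2a_2} and \ref{Bi2f_4} only to compare two putative Nehari parameters $t<s$. Finally, it recovers the signs of $\gamma_u'$ on $(0,t_u)$ and $(t_u,+\infty)$ by a separate constant-sign argument.

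\textbf{Trade-offs.} Working with $\gamma_u'$ means you must invoke \ref{Bi2f_3} at infinity, to pass from $\langle f_x(\xi),\xi\rangle_{h_{\mathbf E}}$ to $\mathcal F_x(\xi)$. This step is valid; the case $\xi=0_x$ is trivial since $D'\geq0$. Your route also does not directly give $\gamma_u>0$ near $0$, which the paper reuses for $I(u)>0$. You compensate in two acceptable ways.
\begin{itemize}
\item You use $\gamma_u(t)\to0$ together with strict increase. To get strictness, pass through an intermediate point: $\gamma_u(1)>\gamma_u(1/2)\geq\lim_{t\to0^+}\gamma_u(t)=0$.
\item Alternatively, your direct bound $I(u)\geq(\frac1q-\frac1\theta)\int_M a(|\nabla^{\mathbf E}u|^p)|\nabla^{\mathbf E}u|^p\,d\lambda_g$ is the same computation as in Lemma~\ref{bounded_sequence}. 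It is positive because $k_1>0$ and $\|\nabla^{\mathbf E}u\|_{L^p}>0$ by the $p$-Poincaré inequality.
\end{itemize}
That same positivity of $\|\nabla^{\mathbf E}u\|_{L^p}$ is also what your near-zero sign estimate implicitly uses.
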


\begin{proof}
Let \(u\in X\setminus\{0\}\) be fixed and consider the fibering map $\gamma_u:[0,\infty)\longrightarrow\mathbb{R}$, $\gamma_u(t):=I(tu)$. Since \(I\in C^{1}(X,\mathbb{R})\), the map \(\gamma_u\) is continuous on
\([0,\infty)\) and of class \(C^{1}\) on \((0,\infty)\). Moreover, the chain
rule yields
\[
\gamma_u'(t)=\langle I'(tu),u\rangle,
\qquad t>0.
\]
Therefore,
\[
tu\in\mathcal N
\quad\Longleftrightarrow\quad
\langle I'(tu),tu\rangle=0
\quad\Longleftrightarrow\quad
t\,\gamma_u'(t)=0.
\]
Since \(t>0\), this is equivalent to $\gamma_u'(t)=0$. We begin by proving the existence of such a parameter \(t_u\).
Combining \eqref{eq:A_lower_from_monotonicity} with assumption
\ref{Bi2a_1}, we obtain
\begin{equation*}
    A(\rho^{p})
\ge
\frac{p}{q}a(\rho^{p})\rho^{p}
\ge
\frac{pk_{1}}{q}\rho^{p}
+
\frac{pk_{2}}{q}\rho^{q},
\qquad
\rho\ge0.
\end{equation*}

Furthermore, by \eqref{estimative_bundle_F} in
Lemma~\ref{lem:fiberwise_growth_estimates}, we obtain
\begin{align}
\gamma_u(t)
&=
\frac1p
\int_M
A\!\left(t^p|\nabla^{\mathbf E}u|_{g\otimes h_{\mathbf E}}^{p}\right)
\,d\lambda_g
-
\int_M
\mathcal F_x(tu)
\,d\lambda_g
\nonumber\\
&\ge
\frac{k_1}{q}t^{p}
\int_M
|\nabla^{\mathbf E}u|_{g\otimes h_{\mathbf E}}^{p}
\,d\lambda_g
+
\frac{k_2}{q}t^{q}
\int_M
|\nabla^{\mathbf E}u|_{g\otimes h_{\mathbf E}}^{q}
\,d\lambda_g 
-
\frac{\varepsilon}{p}t^{p}
\int_M
|u|_{h_{\mathbf E}}^{p}
\,d\lambda_g
-
\frac{C_\varepsilon}{r}t^{r}
\int_M
|u|_{h_{\mathbf E}}^{r}
\,d\lambda_g.
\label{Projection1}
\end{align}
Using Theorem \ref{teo:poincare}, there exists a constant \(C_P>0\) such that
\[
\|u\|_{L^{p}(\mathbf E)}^{p}
\le
C_P
\|\nabla^{\mathbf E}u\|_{L^{p}(T^{*}M\otimes\mathbf E)}^{p}.
\]
Substituting this estimate into \eqref{Projection1} yields
\[
\gamma_u(t)
\ge
\left(
\frac{k_1}{q}
-
\frac{\varepsilon C_P}{p}
\right)
t^p
\int_M
|\nabla^{\mathbf E}u|_{g\otimes h_{\mathbf E}}^{p}
\,d\lambda_g
-
\frac{C_\varepsilon}{r}
t^r
\int_M
|u|_{h_{\mathbf E}}^{r}
\,d\lambda_g,
\]
where we have discarded the nonnegative term involving
\(t^q\). Choosing $\varepsilon \in \Big( 0,  \frac{pk_1}{qC_P}\Big)$. Therefore, defining
\[
C_1=
\left(
\frac{k_1}{q}
-
\frac{\varepsilon C_P}{p}
\right)
\int_M
|\nabla^{\mathbf E}u|_{g\otimes h_{\mathbf E}}^{p}
\,d\lambda_g>0,
\]
and
\[
C_2=
\frac{C_\varepsilon}{r}
\int_M
|u|_{h_{\mathbf E}}^{r}
\,d\lambda_g>0,
\]
we conclude that
\[
\gamma_u(t)
\ge
C_1t^p-C_2t^r,
\qquad t>0.
\]
Since \(r>p\), it follows that $\gamma_u(t)>0$ for all sufficiently small \(t>0\).

On the other hand, by Lemma~\ref{lem:lower_bound_F_bundle}, there exist
constants \(D>0\) and \(D'\ge0\) such that
\[
\mathcal F_x(\xi)
\ge
D|\xi|_{h_{\mathbf E}}^{\theta}-D',
\qquad
x\in M,\ \xi\in\mathbf E_x.
\]
Moreover, by assumption \ref{Bi2a_1}, there exists a constant \(C>0\) such
that
\[
A(\rho^p)\le C(\rho^p+\rho^q),
\qquad
\rho\ge0.
\]
Hence,
\begin{align}
\gamma_u(t)
&\leq
Ct^p
\int_M
|\nabla^{\mathbf E}u|_{g\otimes h_{\mathbf E}}^p
\,d\lambda_g
+
Ct^q
\int_M
|\nabla^{\mathbf E}u|_{g\otimes h_{\mathbf E}}^q
\,d\lambda_g-Dt^\theta
\int_M
|u|_{h_{\mathbf E}}^\theta
\,d\lambda_g
+
D'\lambda_g(M).
\label{eq:gamma_upper_bound_large_t}
\end{align}
Since \(u\neq0\), we have
\[
\int_M|u|_{h_{\mathbf E}}^\theta\,d\lambda_g>0.
\]
Because \(\theta>q\ge p\), the negative term of order \(t^\theta\)
dominates the positive terms of orders \(t^p\) and \(t^q\) as
\(t\to\infty\). Consequently,
\[
\gamma_u(t)\to-\infty
\qquad\text{as }t\to\infty.
\]
Since \(\gamma_u(t)>0\) for all sufficiently small \(t>0\), it follows that
\(\gamma_u\) attains a global maximum at some point \(t_u>0\). As
\(\gamma_u\in C^{1}(0,\infty)\), we have $\gamma_u'(t_u)=0$, which is equivalent to
\[
\langle I'(t_u u),t_u u\rangle=0.
\]
Hence, \(t_u u\in\mathcal N\), establishing the existence of the required
parameter \(t_u\).  This proves that $\mathcal N\neq\varnothing$.

We now establish the uniqueness of \(t_u\). Let \(t,s>0\) be such that
\(tu,su\in\mathcal N\). Since
\[
\langle I'(\tau u),\tau u\rangle=0,
\qquad \tau=t,s,
\]
dividing the corresponding Nehari identities by \(\tau^q\) yields
\begin{eqnarray}\label{Projection4}
 \int_M
a(t^p|\nabla^\textbf{E} u|_{g\otimes h_\textbf{E}}^p)
t^{p-q}|\nabla^\textbf{E} u|_{g\otimes h_\textbf{E}}^p\,d\lambda_g
=
\int_M
\frac{\langle f(tu),tu\rangle_{h_\textbf{E}}}{t^q}\,d\lambda_g   
\end{eqnarray}
and
\begin{equation}\label{Projection5}
  \int_M
a(s^p|\nabla^\textbf{E} u|_{g\otimes h_\textbf{E}}^p)
s^{p-q}|\nabla^\textbf{E} u|_{g\otimes h_\textbf{E}}^p\,d\lambda_g
=
\int_M
\frac{\langle f(su),su\rangle_{h_\textbf{E}}}{s^q}\,d\lambda_g.  
\end{equation}

Assume, by contradiction, that \(0<t<s\).
Since assumption~\ref{Bi2a_2} asserts that
\begin{equation*}
    \tau \to \frac{a(\tau^p)}{\tau^{q-p}}
\end{equation*}
is nonincreasing on \((0,\infty)\), we obtain
\begin{equation*}
    a(t^p|\nabla^\textbf{E} u|_{g\otimes h_\textbf{E}}^p)t^{p-q}
\geq
a(s^p|\nabla^\textbf{E} u|_{g\otimes h_\textbf{E}}^p)s^{p-q}.
\end{equation*}
whenever \(|\nabla^{\mathbf E}u|>0\). Hence,
\[
\int_M
\Bigl(
a(t^p|\nabla^{\mathbf E}u|^p)t^{p-q}
-
a(s^p|\nabla^{\mathbf E}u|^p)s^{p-q}
\Bigr)
|\nabla^{\mathbf E}u|^p\,d\lambda_g
\ge0.
\]
On the other hand, the set $\Omega_u=\{x\in M\mid u(x)\neq0\}$ has positive measure. By assumption~\ref{Bi2f_4}, for every
\(x\in\Omega_u\) the function
\[
\tau\to
\frac{\langle f_x(\tau u(x)),\tau u(x)\rangle_{h_\textbf{E}} }{\tau^q}
\]
is strictly increasing on \((0,\infty)\). Therefore,

\[
\frac{\langle f(tu),tu\rangle_{h_\textbf{E}}}{t^q}
-
\frac{\langle f(su),su\rangle_{h_\textbf{E}}}{s^q}
<0
\qquad
\text{a.e. on }\Omega_u.
\]
On $M\setminus\Omega_u$ the same difference is zero. Therefore,
\[
\int_M
\left(
\frac{\langle f(tu),tu\rangle_{h_\textbf{E}}}{t^q}
-
\frac{\langle f(su),su\rangle_{h_\textbf{E}}}{s^q}
\right)\,d\lambda_g
<0.
\]
Subtracting the two Nehari identities divided by $t^q$ and $s^q$, we get
\[
0\leq
\int_M
\Bigl(
a(t^p|\nabla^\textbf{E} u|_{g\otimes h_\textbf{E}}^p)t^{p-q}
-
a(s^p|\nabla^\textbf{E} u|_{g\otimes h_\textbf{E}}^p)s^{p-q}
\Bigr)
|\nabla^\textbf{E} u|_{g\otimes h_\textbf{E}}^p\,d\lambda_g\] \[=
\int_M
\left(
\frac{\langle f(tu),tu\rangle_{h_\textbf{E}}}{t^q}
-
\frac{\langle f(su),su\rangle_{h_\textbf{E}}}{s^q}
\right)\,d\lambda_g
<0,
\]
which is impossible. Therefore $t=s$. Hence the critical point $t_u$ is
unique.

It remains to determine the sign of $\gamma_u'$. Since $t_u$ is the only
critical point of $\gamma_u$, the continuity of $\gamma_u'$ implies that
$\gamma_u'$ has constant sign on each of the intervals $(0,t_u)$ and
$(t_u,+\infty)$. If $\gamma_u'<0$ on $(0,t_u)$, then $\gamma_u$ would be
strictly decreasing there. Since $\displaystyle\lim_{t\to0^+}\gamma_u(t)=I(0)=0$, this
would imply $\gamma_u(t)<0$ for every $t\in(0,t_u)$, contradicting the
positivity of $\gamma_u(t)$ for $t>0$ sufficiently small. Therefore
$\gamma_u'(t)>0$ for every $0<t<t_u$.

Similarly, if $\gamma_u'>0$ on $(t_u,+\infty)$, then $\gamma_u$ would be
strictly increasing there, contradicting
$\gamma_u(t)\to-\infty$ as $t\to+\infty$. Hence
$\gamma_u'(t)<0$ for every $t>t_u$. Thus $\gamma_u$ is strictly increasing
on $(0,t_u)$, strictly decreasing on $(t_u,+\infty)$, and $t_u$ is its
unique global maximum point.

Finally, let $u\in\mathcal N$. Then $t=1$ is a critical point of
$\gamma_u$, and by uniqueness it is the only critical point. From what we
have just proved, this point is the global maximum of $\gamma_u$. Since
$\gamma_u(t)>0$ for $t>0$ sufficiently small, we conclude that
$I(u)=\gamma_u(1)>0$. This completes the proof.
\end{proof}

In the following lemma, we will prove that any element of the Nehari manifold is bounded away from zero.
\begin{lemma}
\label{lem:nehari_away_from_zero}
There exists $C_0>0$ such that $\|u\|_X\geq C_0$ for every
$u\in\mathcal N$.
\end{lemma}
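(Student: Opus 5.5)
The plan is to use the Nehari identity \eqref{nehari_identity} to compare the $q$-growth of the left-hand side from below with the superhomogeneous growth of the right-hand side from above.

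\emph{Lower bound for the left-hand side.} Let $u\in\mathcal N$. By \ref{Bi2a_1}, $a(\rho^p)\rho^p\ge k_1\rho^p+k_2\rho^q$. In the case $p<q$ we have $k_2>0$, so discarding the $p$-term gives
\[
\int_M a(|\nabla^{\mathbf E}u|^p)|\nabla^{\mathbf E}u|^p\,d\lambda_g\ \ge\ k_2\|u\|_X^q .
\]
In the case $p=q$ we use $k_1>0$ instead; in either case the left side is at least $k\|u\|_X^q$ with $k:=k_2$ if $p<q$ and $k:=k_1$ if $p=q$.

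\emph{Upper bound for the right-hand side.} By \eqref{estimative_bundle_f} in Lemma~\ref{lem:fiberwise_growth_estimates} and Hölder's inequality,
\[
\int_M\langle f(u),u\rangle\,d\lambda_g\le \varepsilon\|u\|_{L^p}^p+C_\varepsilon\|u\|_{L^r}^r .
\]
Since $r<q^*$, Lemma~\ref{Embedding} gives $\|u\|_{L^r}\le S_r\|u\|_X$. For the $L^p$ term, the finite measure of $M$ together with Theorem~\ref{teo:poincare} (or Lemma~\ref{Embedding} with $\ell=p$) gives $\|u\|_{L^p}\le S_p\|u\|_X$.

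\emph{Combining the bounds.} We obtain
\[
k\|u\|_X^q\le \varepsilon S_p^p\|u\|_X^p+C_\varepsilon S_r^r\|u\|_X^r .
\]
If $p=q$, choose $\varepsilon$ with $\varepsilon S_p^p<k/2$ and absorb the $p$-term into the left side. Dividing by $\|u\|_X^q>0$ (recall $u\neq0$) then gives $\|u\|_X^{r-q}\ge k/(2C_\varepsilon S_r^r)$.

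\emph{Main obstacle.} The obstacle is the case $p<q$. There the $\varepsilon\|u\|_X^p$ term has lower homogeneity than $q$, so it cannot be absorbed for small $\|u\|_X$. To handle it, I keep the $k_1$ term: the left side is at least $k_1\|\nabla u\|_{L^p}^p+k_2\|u\|_X^q$. By Poincaré in $W^{1,p}_0$ (Remark~\ref{embedding_boundary} and Theorem~\ref{teo:poincare} with exponent $p$), $\varepsilon\|u\|_{L^p}^p\le\varepsilon C_P\|\nabla u\|_{L^p}^p$. Choosing $\varepsilon C_P<k_1$ absorbs this term into the $k_1$ part, leaving
\[
k_2\|u\|_X^q\le C_\varepsilon S_r^r\|u\|_X^r .
\]
Since $r>q$, this yields $\|u\|_X\ge (k_2/(C_\varepsilon S_r^r))^{1/(r-q)}=:C_0>0$, uniformly in $u\in\mathcal N$.
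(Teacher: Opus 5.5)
Your proof is correct, and it is organized differently from the paper's.

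The paper bounds both $\|u\|_{L^p}$ and $\|u\|_{L^r}$ by $\|u\|_X$ and then splits into the cases $\|u\|_X>1$ and $\|u\|_X<1$. In the small-norm case it uses $k_1\|u\|_X^p+k_2\|u\|_X^q$ as a lower bound for the left-hand side of the Nehari identity, that is, it replaces $\int_M|\nabla^{\mathbf E}u|^p\,d\lambda_g$ by $\|u\|_X^p$. This substitution is exact when $p=q$, but for $p<q$ it goes the wrong way. Hölder's inequality only gives $\|\nabla^{\mathbf E}u\|_{L^p}\le\lambda_g(M)^{1/p-1/q}\|u\|_X$. Moreover, once $\|u\|_{L^p}$ has been bounded by $\|u\|_X$, the term $\varepsilon\|u\|_X^p$ has lower homogeneity than $k_2\|u\|_X^q$ and cannot be absorbed for small norms. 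The large-norm case, on the other hand, is trivial.

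Your argument removes both the case split and this difficulty. You keep $k_1\|\nabla^{\mathbf E}u\|_{L^p}^p$ on the left and estimate $\varepsilon\|u\|_{L^p}^p$ through the Poincaré inequality in $W^{1,p}_0(\mathbf E)$. That inequality is available by Remark~\ref{embedding_boundary} and Theorem~\ref{teo:poincare}, since $1<p\le q<n$. This absorbs the $\varepsilon$-term with the correct homogeneity and leaves only $k_2\|u\|_X^q\le C_\varepsilon S_r^r\|u\|_X^r$ with $r>q$.

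This is the same device the paper itself uses in Lemma~\ref{Nehari_projection}, and it is exactly what is needed when $p<q$. The only cosmetic point is that, with Theorem~\ref{teo:poincare} as stated, the constant in front of $\|\nabla^{\mathbf E}u\|_{L^p}^p$ is $C_P^p$ rather than $C_P$.
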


\begin{proof}
Let $u\in\mathcal N$. By the Nehari identity \eqref{nehari_identity}, we have

\begin{equation}\label{Imp}
\int_M
a(|\nabla^\textbf{E} u|_{g\otimes h_\textbf{E}}^{p})
|\nabla^\textbf{E} u|_{g\otimes h_\textbf{E}}^{p}\,d\lambda_g
=\int_M
\langle f(u),u\rangle_{h_\textbf{E}}\,d\lambda_g.
\end{equation}

Using the hypothesis \ref{Bi2a_1} on the left side of \eqref{Imp} and \eqref{estimative_bundle_f} of the Lemma~\ref{lem:fiberwise_growth_estimates} on the right side of \eqref{Imp} , we have
\begin{eqnarray}\label{Far_Zero1}
k_1\int_M|\nabla^\textbf{E} u|_{g\otimes h_\textbf{E}}^p\,d\lambda_g
+
k_2\int_M|\nabla^\textbf{E} u|_{g\otimes h_\textbf{E}}^q\,d\lambda_g \leq
\varepsilon\int_M|u|_{h_\textbf{E}}^p\,d\lambda_g
+C_\varepsilon\int_M|u|_{h_\textbf{E}}^r\,d\lambda_g.
\end{eqnarray}
By the Lemma \ref{Embedding}, there exist constant positive $C_1$ and $C_2$ such that
\begin{equation}\label{Imp1}
    \|u\|_{L^p(\textbf{E})}\leq
C_2\| u\|_{X} 
\end{equation}
and 
\begin{equation}\label{Imp2}
    \|u\|_{L^r(\textbf{E})} \leq
C_2\| u\|_{X} 
\end{equation}
Substituting \eqref{Imp1} and \eqref{Imp1} into \eqref{Imp}, we get

\begin{eqnarray}\label{Imp3}
k_2 \| u\|_{X}^q \leq
\varepsilon \|u\|_{L^p(\textbf{E})}^{p}
+C_\varepsilon \|u\|_{L^r(\textbf{E})}^{r} \leq \varepsilon C_2^p\| u\|_{X}^p
+C_\varepsilon C_1^r\| u\|_{X}^r
\end{eqnarray}
 \medskip\noindent \underline{Case $\| u\|_{X}>1$:} $2 \leq p \leq q$ and $q <r<q^*$
\begin{eqnarray*}
k_2 \| u\|_{X}^q  \leq \varepsilon C_2^p\| u\|_{X}^p
+C_\varepsilon C_1^r\| u\|_{X}^r \leq \max \{ \varepsilon C_2^p, C_\varepsilon C_1^r\}\| u\|_{X}^r 
\end{eqnarray*}
Consequently, we have
\begin{eqnarray}\label{Imp4}
0< \left(\frac{k_2}{ \max \{ \varepsilon C_2^p, C_\varepsilon C_1^r\}}\right)^{\frac{1}{r-q}} \leq\| u\|_{X}
\end{eqnarray}

\medskip
\noindent\underline{Case \(0<\|u\|_{X}<1\):}
Assume that \(2\le p\le q\) and \(q<r<q^*\). Since
\(\|u\|_{X}<1\), we have
\[
\|u\|_{X}^{q}\le \|u\|_{X}^{p}.
\]
Hence, from \eqref{Far_Zero1},
\[
k_{1}\|u\|_{X}^{p}
\le
k_{1}\|u\|_{X}^{p}
+
k_{2}\|u\|_{X}^{q}
\le
\varepsilon C_{2}^{p}\|u\|_{X}^{p}
+
C_{\varepsilon}C_{1}^{r}\|u\|_{X}^{r}.
\]
Choosing \(\varepsilon>0\) sufficiently small so that
\(k_{1}-\varepsilon C_{2}^{p}>0\), we obtain
\[
\bigl(k_{1}-\varepsilon C_{2}^{p}\bigr)\|u\|_{X}^{p}
\le
C_{\varepsilon}C_{1}^{r}\|u\|_{X}^{r},
\]
which implies
\begin{eqnarray}\label{Imp5}
0<   \left(
\frac{k_{1}-\varepsilon C_{2}^{p}}
{C_{\varepsilon}C_{1}^{r}}
\right)^{\frac1{r-p}} \leq \|u\|_{X} 
\end{eqnarray}
Combining \eqref{Imp4} and \eqref{Imp5} we get 
\begin{eqnarray}\label{Imp6}
0< C_0:= \min \left\{ \left(\frac{k_2}{ \max \{ \varepsilon C_2^p, C_\varepsilon C_1^r\}}\right)^{\frac{1}{r-q}} ,  \left(
\frac{k_{1}-\varepsilon C_{2}^{p}}
{C_{\varepsilon}C_{1}^{r}}
\right)^{\frac1{r-p}} \right\} \leq \|u\|_{X} 
\end{eqnarray}
which completes the proof.
\end{proof}

\begin{lemma}
\label{bounded_sequence}
If $(u_n)\subset\mathcal N$ is a minimizing sequence of $I$, then
$(u_n)$ is bounded in $X$.
\end{lemma}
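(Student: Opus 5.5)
We argue via the standard Ambrosetti--Rabinowitz device adapted to the Nehari constraint: we combine $I(u_n)$ with $\frac1\theta\langle I'(u_n),u_n\rangle=0$. Since $(u_n)$ is minimizing, $I(u_n)\to c$, and $c<\infty$ is finite (by Lemma~\ref{Nehari_projection}, $\mathcal N\neq\varnothing$ and $I>0$ on $\mathcal N$). Hence there is $K>0$ with $I(u_n)\le K$ for all $n$. Using the Nehari identity \eqref{nehari_identity}, we write
\[
I(u_n)=I(u_n)-\frac1\theta\langle I'(u_n),u_n\rangle
=\int_M\Bigl(\frac1pA(|\nabla^{\mathbf E}u_n|^p)-\frac1\theta a(|\nabla^{\mathbf E}u_n|^p)|\nabla^{\mathbf E}u_n|^p\Bigr)d\lambda_g
+\int_M\Bigl(\frac1\theta\langle f(u_n),u_n\rangle_{h_{\mathbf E}}-\mathcal F_x(u_n)\Bigr)d\lambda_g .
\]

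By \ref{Bi2f_3}, the second integral is nonnegative, so we discard it. For the first integral, we invoke \eqref{eq:A_lower_from_monotonicity}, namely $\frac1pA(\rho^p)\ge\frac1q a(\rho^p)\rho^p$, which gives the pointwise bound
\[
\frac1pA(\rho^p)-\frac1\theta a(\rho^p)\rho^p\ \ge\ \Bigl(\frac1q-\frac1\theta\Bigr)a(\rho^p)\rho^p\ \ge\ \Bigl(\frac1q-\frac1\theta\Bigr)\bigl(k_1\rho^p+k_2\rho^q\bigr),
\]
where the last step uses \ref{Bi2a_1} and the fact that $\theta>q$. Consequently
\[
K\ \ge\ \Bigl(\frac1q-\frac1\theta\Bigr)\Bigl(k_1\int_M|\nabla^{\mathbf E}u_n|^p\,d\lambda_g+k_2\|u_n\|_X^q\Bigr).
\]

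\textbf{Case $p<q$.} Here $k_2>0$, so $\|u_n\|_X^q\le K\bigl(\frac1q-\frac1\theta\bigr)^{-1}k_2^{-1}$, and the bound is immediate.

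\textbf{Case $p=q$.} Here $k_2$ may vanish, but then $\int_M|\nabla^{\mathbf E}u_n|^p\,d\lambda_g=\|u_n\|_X^q$, and $k_1>0$ gives the same conclusion.

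We expect no real obstacle here. The only delicate point is this case distinction, needed because $k_2$ is allowed to be zero when $p=q$. A secondary point is ensuring that all integrals are finite, so that the splitting above is legitimate. This follows from \ref{Bi2a_1}, Lemma~\ref{lem:fiberwise_growth_estimates}, and Lemma~\ref{Embedding}, as already noted for the well-definedness of $I$.
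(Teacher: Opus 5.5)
Your proposal is correct and follows essentially the same route as the paper. Both arguments subtract $\frac1\theta\langle I'(u_n),u_n\rangle=0$ from $I(u_n)$ and use \eqref{eq:A_lower_from_monotonicity}, \ref{Bi2a_1} and \ref{Bi2f_3} to obtain the lower bound $\bigl(\frac1q-\frac1\theta\bigr)\bigl(k_1\|\nabla^{\mathbf E}u_n\|_{L^p}^p+k_2\|u_n\|_X^q\bigr)$, then split into the cases $p<q$ and $p=q$; your explicit check that $c$ is finite ($\mathcal N\neq\varnothing$ and $I>0$ on $\mathcal N$) is a detail the paper leaves implicit.
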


\begin{proof}
Let \((u_n)\subset\mathcal N\) be a minimizing sequence for \(I\). Since
\(I(u_n)\to c\) and \(u_n\in\mathcal N\), we have
\[
I(u_n)=c+o_n(1)
\quad\text{and}\quad
\langle I'(u_n),u_n\rangle=0.
\]
Therefore,
\begin{eqnarray}
c+o_n(1)
&=&
I(u_n)-\frac1\theta\langle I'(u_n),u_n\rangle \nonumber\\
&=&
\frac1p
\int_M
A(|\nabla^{\mathbf E}u_n|_{g\otimes h_{\mathbf E}}^p)\,d\lambda_g
-
\frac1\theta
\int_M
a(|\nabla^{\mathbf E}u_n|_{g\otimes h_{\mathbf E}}^p)
|\nabla^{\mathbf E}u_n|_{g\otimes h_{\mathbf E}}^p\,d\lambda_g \nonumber\\
&&+
\int_M
\left(
\frac1\theta\langle f_x(u_n),u_n\rangle_{h_{\mathbf E}}
-\mathcal F_x(u_n)
\right)\,d\lambda_g.
\end{eqnarray}
By \eqref{eq:A_lower_from_monotonicity} and assumptions
\ref{Bi2a_1} and \ref{Bi2f_3},
\[
\frac1pA(t)-\frac1\theta a(t)t
\ge
\left(\frac1q-\frac1\theta\right)a(t)t,
\]
and
\[
\frac1\theta
\langle f_x(\xi),\xi\rangle
-\mathcal F_x(\xi)
\ge0.
\]
Hence,
\[
c+o_n(1)
\ge
\left(\frac1q-\frac1\theta\right)
\int_M
a(|\nabla^{\mathbf E}u_n|^p)
|\nabla^{\mathbf E}u_n|^p\,d\lambda_g.
\]
Using assumption \ref{Bi2a_1}, we obtain
\[
c+o_n(1)
\geq
\left(\frac1q-\frac1\theta\right)
\left(
k_1
\|\nabla^{\mathbf E}u_n\|_{L^p}^p
+
k_2
\|\nabla^{\mathbf E}u_n\|_{L^q}^q
\right).
\]
If \(p<q\), then \(k_2>0\), and hence
\[
c+o_n(1)
\geq
k_2\left(\frac1q-\frac1\theta\right)\|u_n\|_X^q.
\]
If \(p=q\), then \(k_1>0\) and
\[
c+o_n(1)
\geq
k_1\left(\frac1q-\frac1\theta\right)\|u_n\|_X^q.
\]
Thus, in both cases, \((u_n)\) is bounded in \(X\).
\end{proof}

\begin{proposition}
\label{prop:szulkin_weth_reduction}
Let \(S:=\{u\in X\mid \|u\|_X=1\}\). Then \(X\) and \(I\) satisfy
assumptions \((A_1)\)--\((A_3)\) of
Szulkin and Weth~\cite[Section~3.1]{SzulkinWethNehari}. Hence
\(\widehat m(u):=t_u u\) defines a continuous map
\(\widehat m:X\setminus\{0\}\to\mathcal N\), whose restriction
\(m:=\widehat m|_S:S\to\mathcal N\) is a homeomorphism with inverse
\(m^{-1}(u)=u/\|u\|_X\). Moreover,
\(\Psi:=I\circ m\in C^1(S,\mathbb R)\),
\(w\) is a critical point of \(\Psi\) if and only if
\(m(w)\) is a nontrivial critical point of \(I\), and
\[
\inf_{w\in S}\Psi=\inf_{u\in\mathcal N}I(u).
\]
\end{proposition}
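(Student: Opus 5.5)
The plan is to verify the three abstract hypotheses of Szulkin and Weth \cite[Section~3.1]{SzulkinWethNehari} and then quote their Proposition~3.1 and Corollary~3.3. As we recall them, these hypotheses are:
\((A_1)\) for each \(u\in X\setminus\{0\}\), the fibering map \(\gamma_u(t)=I(tu)\) has a unique maximum point \(t_u>0\), is strictly increasing on \((0,t_u)\) and strictly decreasing on \((t_u,\infty)\);
\((A_2)\) there is \(\delta>0\) such that \(t_u\ge\delta\) for all \(u\in S\);
\((A_3)\) for every compact \(\mathcal W\subset S\) there is \(C_{\mathcal W}>0\) such that \(t_w\le C_{\mathcal W}\) for all \(w\in\mathcal W\).
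We also need \(I\in C^1(X,\mathbb R)\), which is Proposition~\ref{prop:energy_functional_C1}. Once \((A_1)\)--\((A_3)\) hold, continuity of \(\widehat m\), the homeomorphism property of \(m\) with inverse \(u\mapsto u/\|u\|_X\), \(\Psi\in C^1(S)\), the correspondence of critical points, and \(\inf_S\Psi=\inf_{\mathcal N}I\) all follow directly from the abstract results. The last equality is in any case clear, since \(m\) is a bijection \(S\to\mathcal N\).

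Hypothesis \((A_1)\) is exactly Lemma~\ref{Nehari_projection}. For \((A_2)\), let \(u\in S\). Then \(t_uu\in\mathcal N\), so by Lemma~\ref{lem:nehari_away_from_zero} we have \(t_u=\|t_uu\|_X\ge C_0\). Hence \(\delta:=C_0\) works.

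The main obstacle is \((A_3)\). I argue by contradiction. Suppose \(w_n\in\mathcal W\) and \(t_n:=t_{w_n}\to\infty\). By compactness of \(\mathcal W\), after passing to a subsequence \(w_n\to w\) in \(X\), with \(\|w\|_X=1\). By Lemma~\ref{Embedding}, \(w_n\to w\) in \(L^\theta(\mathbf E)\), so \(\int_M|w_n|^\theta_{h_{\mathbf E}}\,d\lambda_g\to\int_M|w|^\theta_{h_{\mathbf E}}\,d\lambda_g>0\). The positivity uses \(w\ne0\) and the Poincaré inequality (Theorem~\ref{teo:poincare}). The bounds \(\|\nabla w_n\|_{L^p}\) and \(\|\nabla w_n\|_{L^q}\) are uniformly controlled, by Remark~\ref{embedding_boundary} and \(\|w_n\|_X=1\).

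Next apply the upper estimate \eqref{eq:gamma_upper_bound_large_t} with \(u=w_n\):
\[
I(t_nw_n)\le C t_n^p+Ct_n^q-D t_n^\theta\int_M|w_n|^\theta\,d\lambda_g+D'\lambda_g(M).
\]
Since \(\theta>q\ge p\) and the \(L^\theta\)-integrals are bounded below by a positive constant for large \(n\), the right-hand side tends to \(-\infty\). This contradicts \(I(t_nw_n)>0\), which holds because \(t_nw_n\in\mathcal N\) (Lemma~\ref{Nehari_projection}). Therefore \(t_w\) is bounded on \(\mathcal W\).

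One point needs care in this step: the constants \(C,D,D'\) in \eqref{eq:gamma_upper_bound_large_t} must be independent of \(u\). This holds because they come only from \ref{Bi2a_1} and Lemma~\ref{lem:lower_bound_F_bundle}. With this checked, \((A_1)\)--\((A_3)\) are all verified, and the abstract theory yields every conclusion of the proposition.
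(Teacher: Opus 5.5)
Your verification of the fibering property, of the lower bound $t_w\ge C_0$ on $S$, and of the upper bound on compact subsets of $S$ is correct. Your sequential contradiction argument is equivalent to the paper's direct estimate $I(tw)\le C(t^p+t^q)-Dt^\theta\mu_K+D'\lambda_g(M)$, with $\mu_K:=\min_{w\in K}\|w\|_{L^\theta(\mathbf E)}^\theta>0$.

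The gap is that you have mis-recalled the abstract hypotheses. In the Banach-space setting of Szulkin--Weth, your ``$(A_1)$'' is their $(A_2)$, and your ``$(A_2)$'' and ``$(A_3)$'' together form their $(A_3)$. Their $(A_1)$ is a condition on the space $X$ that you never check. It asks for a normalization function $\varphi$ such that $\psi(u):=\int_0^{\|u\|_X}\varphi(t)\,dt$ is of class $C^1$, with $J:=\psi'$ bounded on bounded sets and $J(w)[w]=1$ for every $w\in S$.

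This is not cosmetic. Since $X=W^{1,q}_0(\mathbf E)$ is only a Banach space, its unit sphere is not automatically a $C^1$ submanifold. This hypothesis is what makes $S$ a $C^1$ manifold with $T_wS=\ker J(w)$ and $X=T_wS\oplus\mathbb R w$. Without it, the assertions $\Psi\in C^1(S,\mathbb R)$ and ``$w$ is a critical point of $\Psi$'' are not defined, so Corollary~10 of Szulkin--Weth cannot be invoked. The continuity of $\widehat m$, the homeomorphism property of $m$, and the equality of the infima do not depend on it, but the $C^1$ and critical-point conclusions do.

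The repair is short and is exactly what the paper does. Take $\varphi(t)=t^{q-1}$, so that $\psi(u)=\frac1q\|u\|_X^q$. As in the proof of Proposition~\ref{prop:energy_functional_C1}, $\psi$ is $C^1$ with
\[
J(u)[v]=\int_M|\nabla^{\mathbf E}u|_{g\otimes h_{\mathbf E}}^{q-2}\langle\nabla^{\mathbf E}u,\nabla^{\mathbf E}v\rangle_{g\otimes h_{\mathbf E}}\,d\lambda_g .
\]
Cauchy--Schwarz in the fibers and H\"older give $\|J(u)\|_{X'}\le\|u\|_X^{q-1}$, so $J$ is bounded on bounded sets. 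Moreover, $J(w)[w]=\|w\|_X^q=1$ for $w\in S$. Together with $I\in C^1(X,\mathbb R)$ and $I(0)=0$, this completes the verification. Propositions~8 and~9 and Corollary~10 of Szulkin--Weth (not ``Proposition~3.1 and Corollary~3.3'') then give all the stated conclusions.
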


\begin{proof}
It is well known that $X=W^{1,q}_0(\textbf{E})$, endowed with the norm
$\|\cdot\|_X$, is a uniformly convex real Banach space. Moreover,
Proposition~\ref{prop:energy_functional_C1} gives
$I\in C^1(X,\mathbb R)$, and $I(0)=0$. For $(A_1)$, take the normalization function
$\varphi(t):=t^{q-1}$. Then
\[
\psi(u)
:=
\int_0^{\|u\|_X}\varphi(t)\,dt
=
\frac1q\|u\|_X^q
\]
belongs to $C^1(X,\mathbb R)$, and its derivative $J:=\psi'$ is given by
\[
J(u)[v]
=
\int_M
|\nabla^\textbf{E} u|_{g\otimes h_\textbf{E}}^{q-2}
\langle\nabla^\textbf{E} u,\nabla^\textbf{E} v\rangle_{g\otimes h_\textbf{E}}
\,d\lambda_g.
\]
For every $u,v\in X$, the Cauchy--Schwarz inequality in each fiber of
$T^*M\otimes\textbf{E}$ first gives
\[
\left|
\left\langle\nabla^\textbf{E}u,\nabla^\textbf{E}v\right\rangle_{g\otimes h_\textbf{E}}
\right|
\leq
|\nabla^\textbf{E}u|_{g\otimes h_\textbf{E}}
|\nabla^\textbf{E}v|_{g\otimes h_\textbf{E}}.
\]
Therefore, we get
\[
|J(u)[v]|
\leq
\int_M
|\nabla^\textbf{E}u|_{g\otimes h_\textbf{E}}^{q-1}
|\nabla^\textbf{E}v|_{g\otimes h_\textbf{E}}\,d\lambda_g.
\]
Applying Hölder's inequality with conjugate exponents
$\frac{q}{q-1}$ and $q$, we obtain
\[
|J(u)[v]|\leq
\left(
\int_M
\bigl(|\nabla^\textbf{E}u|_{g\otimes h_\textbf{E}}^{q-1}\bigr)^{\frac{q}{q-1}}
\,d\lambda_g
\right)^{\frac{q-1}{q}}
\left(
\int_M
|\nabla^\textbf{E}v|_{g\otimes h_\textbf{E}}^q\,d\lambda_g
\right)^{\frac1q}=
\|u\|_X^{q-1}\|v\|_X.\]
By the definition of the dual norm,
\[
\|J(u)\|_{X'}
=
\sup_{\|v\|_X\leq1}|J(u)[v]|,\]
and hence
\[
\|J(u)\|_{X'}\leq\|u\|_X^{q-1}.
\]
If $u\neq0$, set $v_u:=u/\|u\|_X$. Then $\|v_u\|_X=1$, so $v_u$ is
admissible in the supremum defining $\|J(u)\|_{X^*}$, and
\[
\|J(u)\|_{X'}\geq
|J(u)[v_u]|=
\frac1{\|u\|_X}
\int_M|\nabla^\textbf{E}u|_{g\otimes h_\textbf{E}}^q\,d\lambda_g
=
\|u\|_X^{q-1}.
\]
Thus $\|J(u)\|_{X^*}=\|u\|_X^{q-1}$ for every $u\neq0$. In particular,
the preceding upper estimate shows directly that $J$ is bounded on bounded
subsets of $X$. Moreover, if $w\in S$, then
\[
J(w)[w]
=
\int_M|\nabla^\textbf{E}w|_{g\otimes h_\textbf{E}}^q\,d\lambda_g
=
\|w\|_X^q
=
1.
\]
Thus $(A_1)$ holds.

Assumption $(A_2)$ is precisely Lemma~\ref{Nehari_projection}. To verify
$(A_3)$, let $w\in S$. Since $t_w w\in\mathcal N$, Lemma
\ref{lem:nehari_away_from_zero} gives
\[
t_w=\|t_w w\|_X\geq C_0.
\]
It remains to prove the upper bound on compact subsets of $S$. Let
$K\subset S$ be compact. The assertion is immediate if $K=\varnothing$, so assume that $K$ is nonempty. By the upper estimate in \ref{Bi2a_1}, Hölder's inequality,
the finiteness of $\lambda_g(M)$, and the identity
$\|w\|_X=1$, there exists a constant $C>0$,
independent of $w\in S$ and $t>0$, such that
\[
\frac1p
\int_M
A(t^p|\nabla^\textbf{E}w|^p)\,d\lambda_g
\le
C(t^p+t^q).
\]
By Lemma~\ref{lem:lower_bound_F_bundle},
\[
I(tw)
\leq
C(t^p+t^q)
-Dt^\theta\|w\|_{L^\theta(\textbf{E})}^\theta
+D'\lambda_g(M).
\]
The map $w\mapsto\|w\|_{L^\theta(\textbf{E})}^\theta$ is continuous and positive
on $K$, so
\[
\mu_K:=\min_{w\in K}\|w\|_{L^\theta(\textbf{E})}^\theta>0.
\]
Since $\theta>q\geq p$, it follows that $I(tw)\to-\infty$ as
$t\to+\infty$, uniformly with respect to $w\in K$. Hence there
exists $T_K>0$ such that $I(tw)<0$ whenever $w\in K$ and $t\geq T_K$.
Since Lemma~\ref{Nehari_projection} gives $I(t_w w)>0$, necessarily
$t_w<T_K$ for every $w\in K$. Thus $(A_3)$ holds.

We may now apply the abstract results of Szulkin and Weth. By
\cite[Proposition~8(a)]{SzulkinWethNehari}, the map
$\widehat m:X\setminus\{0\}\to\mathcal N$ is continuous.  Their
Proposition~8(b) then identifies its restriction $m=\widehat m|_S$ as a
homeomorphism. In the present notation, the inverse can also be seen
directly from the uniqueness in Lemma~\ref{Nehari_projection}. Indeed, if
$u\in\mathcal N$ and $w:=u/\|u\|_X\in S$, then
$u=\|u\|_Xw$ is the unique point of the ray $\{tw\mid t>0\}$ which belongs to
$\mathcal N$. Hence $t_w=\|u\|_X$ and therefore $m(w)=u$. Conversely, for
$w\in S$,
\[
\frac{m(w)}{\|m(w)\|_X}
=
\frac{t_w w}{t_w\|w\|_X}
=
w.
\]
Thus $m^{-1}(u)=u/\|u\|_X$. Since the normalization map
$u\mapsto u/\|u\|_X$ is continuous on $X\setminus\{0\}$, this also gives
directly the continuity of the inverse.

Finally, if $\widehat\Psi:=I\circ\widehat m$, then
\cite[Proposition~9]{SzulkinWethNehari} gives
$\widehat\Psi\in C^1(X\setminus\{0\},\mathbb R)$ and
\[
\langle\widehat\Psi'(u),v\rangle
=
\frac{\|\widehat m(u)\|_X}{\|u\|_X}
\langle I'(\widehat m(u)),v\rangle,
\qquad u\in X\setminus\{0\},\quad v\in X.
\]
Restricting to $S$ and using
\cite[Corollary~10(a),(c)]{SzulkinWethNehari}, we obtain
$\Psi=I\circ m\in C^1(S,\mathbb R)$, the stated correspondence between
critical points of $\Psi$ and nontrivial critical points of $I$, and the
identity
\[
\inf_{w\in S}\Psi=\inf_{u\in\mathcal N}I(u).
\]
\end{proof}

In the following lemma, we will prove the existence of a minimizer on the Nehari manifold.
\begin{lemma}
\label{lem:nehari_minimizer}
There exists \(u_0\in\mathcal N\) such that $\displaystyle I(u_0)=\min_{u\in\mathcal N}I(u)=c$.
\end{lemma}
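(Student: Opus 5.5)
Take a minimizing sequence \((u_n)\subset\mathcal N\) with \(I(u_n)\to c\); by Lemma~\ref{Nehari_projection}, \(c\ge0\) is finite. By Lemma~\ref{bounded_sequence}, \((u_n)\) is bounded in \(X\). Since \(X=W^{1,q}_0(\mathbf E)\) is uniformly convex, hence reflexive, we pass to a subsequence with \(u_n\rightharpoonup u_0\) weakly in \(X\). By Lemma~\ref{Embedding}(ii), applied with exponents \(p,r,\theta<q^*\), we also get \(u_n\to u_0\) strongly in \(L^p(\mathbf E)\), \(L^r(\mathbf E)\) and \(L^\theta(\mathbf E)\), and a.e. on \(M\). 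The growth bounds \eqref{estimative_bundle_f}--\eqref{estimative_bundle_F}, together with the superposition results of the appendix (or Vitali's theorem), then give
\[
\int_M\mathcal F_x(u_n)\,d\lambda_g\to\int_M\mathcal F_x(u_0)\,d\lambda_g
\qquad\text{and}\qquad
\int_M\langle f(u_n),u_n\rangle\,d\lambda_g\to\int_M\langle f(u_0),u_0\rangle\,d\lambda_g .
\]

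\emph{First key step: \(u_0\neq0\).} From \eqref{Far_Zero1} and Lemma~\ref{lem:nehari_away_from_zero}, with \(\varepsilon\) small and the Poincaré/embedding constants, we get a lower bound \(\int_M|u_n|^r\,d\lambda_g\ge\delta>0\) uniformly in \(n\). One way to see this: \(k_1\|\nabla u_n\|_p^p+k_2\|u_n\|_X^q\) is bounded below by a positive constant, since \(\|u_n\|_X\ge C_0\); absorbing the \(\varepsilon\|u_n\|_p^p\) term via the Poincaré inequality in \(W^{1,p}_0\) forces \(C_\varepsilon\|u_n\|_r^r\) to stay bounded below. Strong \(L^r\) convergence then gives \(\|u_0\|_r>0\). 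Consequently, Lemma~\ref{Nehari_projection} provides \(t_0:=t_{u_0}>0\) with \(t_0u_0\in\mathcal N\).

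\emph{Second key step: \(I(t_0u_0)\le c\).} The map \(v\mapsto\frac1p\int_MA(|\nabla^{\mathbf E}v|^p)\,d\lambda_g\) is convex by \ref{Bi2a_3}. Indeed, \(\xi\mapsto A(|\xi|^p)\) is the composition of the convex nondecreasing function \(\rho\mapsto A(\rho^p)\) (nondecreasing because \(a\ge0\)) with the convex map \(\xi\mapsto|\xi|\), and it is continuous on \(X\). Hence this map is weakly lower semicontinuous, so is \(v\mapsto\frac1p\int_MA(|\nabla^{\mathbf E}(t_0v)|^p)\,d\lambda_g\). Using Lemma~\ref{Nehari_projection} (\(t=1\) maximizes \(\gamma_{u_n}\)), we obtain
\[
I(t_0u_0)\le\liminf_{n\to\infty}I(t_0u_n)\le\liminf_{n\to\infty}I(u_n)=c .
\]
Since \(t_0u_0\in\mathcal N\), also \(I(t_0u_0)\ge c\). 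The element \(t_0u_0\) is then the desired minimizer; renaming it \(u_0\) proves the lemma.

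The \textbf{main obstacle} is the nonvanishing \(u_0\ne0\), i.e.\ extracting a uniform positive lower bound on \(\|u_n\|_{L^r}\) from the Nehari identity. If the \(\varepsilon\)-absorption is awkward when \(p<q\), the fallback is to argue by contradiction. If \(u_0=0\), then \(\int_M\langle f(u_n),u_n\rangle\,d\lambda_g\to0\), so the left side of \eqref{nehari_identity}, which is at least \(k_1\|\nabla u_n\|_p^p+k_2\|u_n\|_X^q\), tends to zero. This contradicts Lemma~\ref{lem:nehari_away_from_zero} when \(k_2>0\), and when \(p=q\) (where \(k_1>0\)) it contradicts the same lemma as well. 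The remaining care is justifying the convergence of the nonlinear integrals, which follows from the subcritical growth \(r<q^*\) and the compact embedding.
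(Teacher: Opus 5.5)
Your proposal is correct and follows the paper's proof essentially step by step. You extract a weak limit from boundedness and reflexivity, prove $u_0\neq0$ from the Nehari identity and Lemma~\ref{lem:nehari_away_from_zero}, and conclude $c\le I(t_0u_0)\le\liminf_{n\to\infty} I(t_0u_n)\le c$ from weak lower semicontinuity of the convex gradient term together with $I(t_0u_n)\le I(u_n)$. Your fallback contradiction argument for $u_0\neq0$ is exactly the paper's, and your direct uniform lower bound on $\|u_n\|_{L^r}$ is an equally valid variant. The only point you leave implicit is that the convergence $\int_M\mathcal F_x(t_0u_n)\,d\lambda_g\to\int_M\mathcal F_x(t_0u_0)\,d\lambda_g$ is needed at the scaled level $t_0$, not just at $t=1$. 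This follows from the same superposition argument, since $t_0u_n\to t_0u_0$ in $L^p\cap L^r$.
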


\begin{proof}
Let \((u_n)\subset\mathcal N\) be a minimizing sequence for \(I\), so
that \(I(u_n)\to c\). By Lemma~\ref{lem:nonlinear_convergence}, after
passing to a subsequence there exists \(u\in X\) such that
\(u_n\rightharpoonup u\) weakly in \(X\),
\(u_n\to u\) strongly in \(L^p(\textbf{E})\cap L^r(\textbf{E})\), and
\eqref{eq:nonlinear_convergence_f_scaled}--\eqref{eq:nonlinear_convergence_F_scaled}
hold for every fixed \(t\geq0\).

We first prove that \(u\neq0\). Suppose, by contradiction, that \(u=0\). 
Taking \(t=1\) in \eqref{eq:nonlinear_convergence_f_scaled}, we obtain
\[
\int_M
\langle f_x(u_n),u_n\rangle_{h_{\mathbf E}}\,d\lambda_g
\longrightarrow0.
\]
Using the Nehari identity \eqref{nehari_identity} and assumption
\ref{Bi2a_1},
\[k_1\int_M
|\nabla^\textbf{E}u_n|_{g\otimes h_\textbf{E}}^p\,d\lambda_g
+
k_2\int_M
|\nabla^\textbf{E}u_n|_{g\otimes h_\textbf{E}}^q\,d\lambda_g\leq
\int_M
\langle f_x(u_n),u_n\rangle_{h_\textbf{E}}\,d\lambda_g
\longrightarrow0.
\]

\medskip
\noindent\underline{If \(p<q\)}, then \(k_2>0\), and therefore \(\|u_n\|_X\to0\).

\medskip
\noindent\underline{If \(p=q\)}, then \(k_1>0\), and the same conclusion follows from the
first integral. In either case this contradicts
Lemma~\ref{lem:nehari_away_from_zero}. Hence \(u\neq0\).

\medskip

By Lemma~\ref{Nehari_projection}, there exists a unique \(t_u>0\) such
that \(t_u u\in\mathcal N\). Since \(u_n\in\mathcal N\), the same lemma
shows that \(t=1\) is the global maximum of \(t\mapsto I(tu_n)\) on
\((0,+\infty)\). Consequently, $I(t_u u_n)\leq I(u_n)$ for every $n$.

Recall that
\[
J_1(v):=
\frac1p
\int_M
A(|\nabla^\textbf{E}v|_{g\otimes h_\textbf{E}}^p)\,d\lambda_g.
\]
By \ref{Bi2a_3}, the function \(G(t):=A(t^p)\) is convex on
\([0,+\infty)\). Since \(a\geq0\), both \(A\) and \(G\) are
nondecreasing. Hence the map
\[
\eta\longmapsto
\frac1pA(|\eta|_{g\otimes h_\textbf{E}}^p)
\]
is convex on each fiber of \(T^*M\otimes\textbf{E}\), and therefore
\(J_1\) is convex on \(X\). By
Proposition~\ref{prop:energy_functional_C1}, \(J_1\) is continuous and
thus weakly lower semicontinuous. Since
\(t_u u_n\rightharpoonup t_u u\) weakly in \(X\),
\[
J_1(t_u u)
\leq
\liminf_{n\to\infty}J_1(t_u u_n).
\]
On the other hand, taking \(t=t_u\) in
\eqref{eq:nonlinear_convergence_F_scaled} gives
\[
\int_M\mathcal F_x(t_u u_n)\,d\lambda_g
\longrightarrow
\int_M\mathcal F_x(t_u u)\,d\lambda_g.
\]
Combining these two facts, we obtain
\[
I(t_u u)
\leq
\liminf_{n\to\infty}I(t_u u_n).
\]
Since \(t_u u\in\mathcal N\), the definition of \(c\) gives
\(c\leq I(t_u u)\). Moreover, \(I(t_u u_n)\leq I(u_n)\) for every
\(n\). Therefore,
\[
c
\leq
I(t_u u)
\leq 
\liminf_{n\to\infty}I(t_u u_n)
\leq
\lim_{n\to\infty}I(u_n)
=
c.
\]
Thus \(I(t_u u)=c\). Setting \(u_0:=t_u u\), we conclude that
\(u_0\in\mathcal N\) and
\[
I(u_0)=c=\min_{v\in\mathcal N}I(v).
\]
\end{proof}

Finally, we can establish the demonstration of our main variational result, Theorem \ref{teo_A}.

\begin{proof}[Proof of Theorem~\ref{teo_A}]
By Lemma~\ref{lem:nehari_minimizer}, there exists $u_0\in\mathcal N$ such
that $I(u_0)=c$. Let $w_0:=m^{-1}(u_0)\in S$, where $m$ is the
homeomorphism from Proposition~\ref{prop:szulkin_weth_reduction}. Then
\[
\Psi(w_0)
=
I(u_0)
=
c
=
\inf_{w\in S}\Psi(w).
\]
Thus $w_0$ is a critical point of $\Psi$. By the critical-point
correspondence in Proposition~\ref{prop:szulkin_weth_reduction},
$u_0=m(w_0)$ is a nontrivial critical point of $I$. Proposition
\ref{prop:energy_functional_C1} therefore shows that $u_0$ is a
nontrivial weak solution of \eqref{ProblemP}.

If $v\in X\setminus\{0\}$ is any other critical point of $I$, then
$\langle I'(v),v\rangle=0$, and hence $v\in\mathcal N$. Therefore
\[
I(u_0)=c\leq I(v).
\]
Consequently, $u_0$ is a ground state solution of \eqref{ProblemP}.
\end{proof}

\appendix

\section{Appendix}\label{sec:appendix}

The next lemma provides the continuity property for fiberwise superposition operators that will be used in the proof of the differentiability of the energy functional.
\begin{lemma}\label{lem:fiberwise_superposition_continuity}
Let $F\to M$ and $G\to M$ be finite-rank metric vector bundles, and let
$\mathscr G:F\to G$ be a continuous fiber-preserving map. Suppose that
$m>1$ and that there exists $C>0$ such that
$\displaystyle |\mathscr G_x(\xi)|_G\leq C(1+|\xi|_F^{m-1})$
for every $x\in M$ and every $\xi\in F_x$. Then the fiberwise
superposition operator
$\displaystyle \mathcal N_{\mathscr G}:L^m(F)\to L^{m'}(G)$,
defined by
$\displaystyle \mathcal N_{\mathscr G}(w)(x):=\mathscr G_x(w(x))$,
where $ m':=\frac{m}{m-1}$, is well defined and continuous.
\end{lemma}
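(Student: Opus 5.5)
The plan is to reduce the statement to the classical Krasnosel'skii/Vainberg argument for Nemytskii operators, carried out fiberwise. Well-definedness comes first. Since \(\mathscr G\) is continuous and fiber-preserving, for a measurable section \(w\) of \(F\) the map \(x\mapsto\mathscr G_x(w(x))\) is a measurable section of \(G\). To see this, compose the measurable map \(w:M\to F\) with the continuous map \(\mathscr G:F\to G\); the projection identity \(\pi_G\circ\mathscr G\circ w=\mathrm{id}\) holds almost everywhere because \(\mathscr G\) is fiber-preserving. For the integrability, the growth bound and \((a+b)^{m'}\le 2^{m'-1}(a^{m'}+b^{m'})\) give
\[
|\mathscr G_x(w(x))|_G^{m'}\le 2^{m'-1}C^{m'}\bigl(1+|w(x)|_F^{m}\bigr),
\]
using \((m-1)m'=m\). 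Since \(\lambda_g(M)<\infty\) by compactness, this yields \(\mathcal N_{\mathscr G}(w)\in L^{m'}(G)\) and \(\|\mathcal N_{\mathscr G}(w)\|_{L^{m'}}^{m'}\le C'(\lambda_g(M)+\|w\|_{L^m}^m)\). The bound is also compatible with the a.e. identification of sections.

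For continuity, let \(w_k\to w\) in \(L^m(F)\). I would argue along subsequences: it suffices to show that every subsequence has a further subsequence along which \(\mathcal N_{\mathscr G}(w_k)\to\mathcal N_{\mathscr G}(w)\) in \(L^{m'}(G)\). Given a subsequence, the scalar functions \(|w_k-w|_F\to0\) in \(L^m(M)\), so the usual partial converse of dominated convergence gives a further subsequence and \(h\in L^m(M)\) with \(|w_k-w|_F\to0\) a.e. and \(|w_k|_F\le |w|_F+h\) a.e. Then \(w_k(x)\to w(x)\) in the total space \(F\) for a.e. \(x\), since both lie in the same fiber \(F_x\) and the fiber metric is continuous. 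Continuity of \(\mathscr G\) then gives \(\mathscr G_x(w_k(x))\to\mathscr G_x(w(x))\) in \(G\), hence \(|\mathscr G_x(w_k(x))-\mathscr G_x(w(x))|_G\to0\) a.e.

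Domination follows from the growth bound:
\[
|\mathscr G_x(w_k)-\mathscr G_x(w)|_G^{m'}\le C''\bigl(1+(|w|_F+h)^{m}+|w|_F^m\bigr)\in L^1(M).
\]
The dominated convergence theorem then gives the \(L^{m'}\) convergence along the sub-subsequence, and hence along the full sequence.

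The main point needing care is the passage from convergence in the bundle norm to pointwise convergence in the total space of \(F\), so that continuity of \(\mathscr G\) applies. I expect this to be routine: in a local trivialization over a neighborhood of \(x\), convergence of fiber vectors in the fiber metric is equivalent to convergence of components, and hence to convergence in \(F\). The measurability of the composition is the other minor technical point, and it is handled the same way using local frames.
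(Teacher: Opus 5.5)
Your proposal is correct and follows essentially the same route as the paper. Well-definedness comes from the growth bound together with $(m-1)m'=m$ and the finiteness of $\lambda_g(M)$. Continuity comes from the subsequence principle: a dominating function from the partial converse of dominated convergence, then continuity of $\mathscr G$ for pointwise convergence, then the dominated convergence theorem. The paper builds the dominating function explicitly as $H=\sum_{\ell}|w_{k_\ell}-w|_F$ after extracting a subsequence with $\|w_{k_\ell}-w\|_{L^m(F)}\le 2^{-\ell}$. Your remarks on measurability of the composition and on fiber-norm versus total-space convergence fill in details the paper leaves implicit.
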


\begin{proof}
Since $(m-1)m'=m$, the growth assumption gives
$\displaystyle |\mathscr G_x(w(x))|_G^{m'}
\leq C(1+|w(x)|_F^m)$ almost everywhere. Since $M$ has finite measure,
$\mathscr G(w)\in L^{m'}(G)$ whenever $w\in L^m(F)$.

Let $w_k\to w$ in $L^m(F)$ and let $(w_{k_\ell})$ be an arbitrary
subsequence. Passing to a further subsequence, we may assume that
$\displaystyle \|w_{k_\ell}-w\|_{L^m(F)}\leq2^{-\ell}$. The function
$ H:=\sum_{\ell=1}^{\infty}|w_{k_\ell}-w|_F$ belongs to
$L^m(M)$, because
$ \|H\|_{L^m}\leq\sum_{\ell=1}^{\infty}2^{-\ell}$.     
Consequently, $w_{k_\ell}(x)\to w(x)$ and
$\displaystyle |w_{k_\ell}(x)|_F\leq |w(x)|_F+H(x)$ almost everywhere.

By continuity of $\mathscr G$, one has
$\mathscr G_x(w_{k_\ell}(x))\to\mathscr G_x(w(x))$ almost everywhere.
Moreover,
\[
|\mathscr G_x(w_{k_\ell}(x))-\mathscr G_x(w(x))|_G^{m'}
\leq
C\bigl(1+H(x)^m+|w(x)|_F^m\bigr).
\]
The right-hand side belongs to $L^1(M)$, so the dominated convergence
theorem yields
$\displaystyle \mathscr G(w_{k_\ell})\to\mathscr G(w)$ in $L^{m'}(G)$.
Since every subsequence has a further subsequence with this property,
the whole sequence converges. Thus $\mathcal N_{\mathscr G}$ is
continuous.
\end{proof}

\begin{lemma}\label{lem:nonlinear_convergence}
Let \((u_n)\subset\mathcal N\) be a minimizing sequence for \(c\). Then,
after passing to a subsequence, there exists \(u\in X\) such that
\(u_n\rightharpoonup u\) weakly in \(X\) and
\(u_n\to u\) strongly in
\(L^p(\textbf{E})\cap L^r(\textbf{E})\). Moreover, for every fixed
\(t\geq0\),
\begin{equation}\label{eq:nonlinear_convergence_f_scaled}
\lim_{n\to\infty}
\int_M
\langle f_x(tu_n),tu_n\rangle_{h_{\mathbf E}}\,d\lambda_g
=
\int_M
\langle f_x(tu),tu\rangle_{h_{\mathbf E}}\,d\lambda_g,
\end{equation}
and
\begin{equation}\label{eq:nonlinear_convergence_F_scaled}
\lim_{n\to\infty}
\int_M
\mathcal F_x(tu_n)\,d\lambda_g
=
\int_M
\mathcal F_x(tu)\,d\lambda_g.
\end{equation}
\end{lemma}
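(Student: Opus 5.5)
The plan has three steps: extract a weakly convergent subsequence, upgrade it to strong convergence in $L^p$ and $L^r$ by compactness, and then pass to the limit in the two nonlinear integrals using the superposition lemma.

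\emph{Step 1: boundedness and weak limit.} By Lemma~\ref{bounded_sequence}, the minimizing sequence $(u_n)\subset\mathcal N$ is bounded in $X=W^{1,q}_0(\mathbf E)$. As already used in Proposition~\ref{prop:szulkin_weth_reduction}, $X$ with $\|\cdot\|_X$ is uniformly convex, hence reflexive. So after passing to a subsequence there is $u\in X$ with $u_n\rightharpoonup u$ weakly in $X$.

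\emph{Step 2: strong convergence in $L^p$ and $L^r$.} Since $p\le q<q^*$ and $r<q^*$, Lemma~\ref{Embedding}(ii) gives compact embeddings $X\hookrightarrow L^p(\mathbf E)$ and $X\hookrightarrow L^r(\mathbf E)$. A compact linear operator maps weakly convergent sequences to strongly convergent ones. Hence $u_n\to u$ in $L^p(\mathbf E)\cap L^r(\mathbf E)$, with no further extraction needed, since the weak limit $u$ is fixed.

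\emph{Step 3: the nonlinear limits.} Fix $t\ge0$. Then $tu_n\to tu$ in $L^r(\mathbf E)$.
\begin{itemize}
\item For \eqref{eq:nonlinear_convergence_f_scaled}: by \ref{Bi2f_2}, $f$ is a continuous fiber-preserving map with $|f_x(\xi)|\le C_f(1+|\xi|^{r-1})$. Lemma~\ref{lem:fiberwise_superposition_continuity}, applied with $m=r>1$ and $F=G=\mathbf E$, gives $f(tu_n)\to f(tu)$ in $L^{r'}(\mathbf E)$. Writing
\[
\langle f(tu_n),tu_n\rangle-\langle f(tu),tu\rangle=\langle f(tu_n)-f(tu),tu_n\rangle+\langle f(tu),tu_n-tu\rangle,
\]
Hölder's inequality with exponents $r'$ and $r$ shows that both pieces tend to zero in $L^1$, since $(tu_n)$ is bounded in $L^r$.
\item For \eqref{eq:nonlinear_convergence_F_scaled}: Lemma~\ref{lem:fiberwise_growth_estimates} gives $|\mathcal F_x(\xi)|\le C(1+|\xi|^r)$. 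I would apply Lemma~\ref{lem:fiberwise_superposition_continuity} to $\mathcal F$ viewed as a fiber-preserving map into the trivial line bundle. Choose the exponent $m$ so that $m-1=r\cdot\frac{m-1}{m}\cdot$, or more simply repeat that lemma's proof directly. Along a subsequence, $tu_n\to tu$ a.e. and $|tu_n|\le |tu|+H$ with $H\in L^r$. Continuity of $\mathcal F$ (established in the introduction) and dominated convergence with majorant $C(1+(|tu|+H)^r)\in L^1$ give convergence of the integrals. The usual subsequence-of-subsequence argument then yields convergence of the full sequence.
\end{itemize}

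\emph{Main obstacle.} Steps 1 and 2 are routine. The only point needing care is the $L^1$ bookkeeping for $\mathcal F$, because Lemma~\ref{lem:fiberwise_superposition_continuity} is stated for targets in $L^{m'}$. The cleanest route is to redo its dominated convergence argument with the $L^1$ majorant above rather than to force the exponents to match.
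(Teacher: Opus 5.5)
Your proof is correct. Steps 1--2 and the treatment of \eqref{eq:nonlinear_convergence_f_scaled} coincide with the paper's. Your remark that complete continuity of the compact embedding makes a further extraction unnecessary is a small improvement.

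The genuine difference is in \eqref{eq:nonlinear_convergence_F_scaled}. The paper does not pass through a.e.\ convergence there. It writes $\mathcal F_x(tu_n)-\mathcal F_x(tu)$ via the fundamental theorem of calculus along the segment $w_{n,s}=t\bigl((1-s)u+su_n\bigr)$. It then bounds $\|f(w_{n,s})\|_{L^{r'}(\mathbf E)}$ uniformly in $n$ and $s$ using \ref{Bi2f_2} and the $L^r$-boundedness of $(u_n)$. After Fubini and H\"older this gives the quantitative estimate $\bigl|\int_M\mathcal F_x(tu_n)\,d\lambda_g-\int_M\mathcal F_x(tu)\,d\lambda_g\bigr|\le tC_t\|u_n-u\|_{L^r(\mathbf E)}$, i.e.\ local Lipschitz continuity of $v\mapsto\int_M\mathcal F_x(v)\,d\lambda_g$ on bounded subsets of $L^r(\mathbf E)$.

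Your route is the standard proof of continuity of the Nemytskii operator from $L^r$ into $L^1$: a fast subsequence with majorant $|tu|+H\in L^r$, continuity of $\mathcal F$, dominated convergence with the $L^1$ bound $C\bigl(1+(|tu|+H)^r\bigr)$, and the subsequence-of-subsequence argument. It uses only the continuity of $\mathcal F$ and its growth bound, not the fiber gradient, so it is slightly more robust. The paper's argument instead yields an explicit rate and avoids a second a.e.\ extraction.

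You were right not to force Lemma~\ref{lem:fiberwise_superposition_continuity} onto $\mathcal F$. With growth $|\xi|^r$ it would require $m=r+1$, i.e.\ convergence in $L^{r+1}(\mathbf E)$, which is not available in general. The garbled half-sentence about choosing $m$ should simply be deleted.
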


\begin{proof}
By Lemma~\ref{bounded_sequence}, \((u_n)\) is bounded in \(X\). Since
\(X=W^{1,q}_0(\textbf{E})\) is reflexive, after passing to a subsequence
there exists \(u\in X\) such that \(u_n\rightharpoonup u\) weakly in
\(X\). By the compact Sobolev embedding in Lemma~\ref{Embedding}, and
since \(p\leq q<r<q^*\), after passing to a further subsequence if
necessary,
\[
u_n\longrightarrow u
\qquad\text{strongly in }L^p(\textbf{E})\cap L^r(\textbf{E}).
\]
Fix \(t\geq0\). If \(t=0\), both conclusions are immediate. Suppose
therefore that \(t>0\).

We first prove \eqref{eq:nonlinear_convergence_f_scaled}. Since
\(tu_n\to tu\) strongly in \(L^r(\textbf{E})\), assumption~\ref{Bi2f_2}
and Lemma~\ref{lem:fiberwise_superposition_continuity}, applied with
\(m=r\) and \(\mathscr G=f\), yield $f(tu_n)\longrightarrow f(tu)$ strongly in $L^{r'}(\textbf{E})$, where $r'=\frac{r}{r-1}$. Moreover, \((tu_n)\) is bounded in \(L^r(\textbf{E})\). Hence, by
H\"older's inequality,
\[
\begin{aligned}
&\left|
\int_M\langle f_x(tu_n),tu_n\rangle_{h_{\mathbf E}}\,d\lambda_g
-
\int_M\langle f_x(tu),tu\rangle_{h_{\mathbf E}}\,d\lambda_g
\right|\\
&\qquad\leq
\|f(tu_n)-f(tu)\|_{L^{r'}(\textbf{E})}
\|tu_n\|_{L^r(\textbf{E})}
+
\|f(tu)\|_{L^{r'}(\textbf{E})}
\|t(u_n-u)\|_{L^r(\textbf{E})},
\end{aligned}
\]
and the right-hand side converges to zero. This proves
\eqref{eq:nonlinear_convergence_f_scaled}.

We now prove \eqref{eq:nonlinear_convergence_F_scaled}. For almost every
\(x\in M\), the points \(u_n(x)\) and \(u(x)\) belong to the same fiber
\(\textbf{E}_x\). Since \(\mathcal F_x\in C^1(\textbf{E}_x)\) and
\(f_x\) is its fiber gradient, the fundamental theorem of calculus on
the segment joining \(tu(x)\) and \(tu_n(x)\) gives
\begin{eqnarray*}
    \mathcal F_x(tu_n(x))-\mathcal F_x(tu(x))=
t\int_0^1
\left\langle
f_x\!\left(t\bigl((1-s)u(x)+su_n(x)\bigr)\right),
u_n(x)-u(x)
\right\rangle_{h_{\mathbf E}}\,ds.
\end{eqnarray*}
For \(s\in[0,1]\), set
\(w_{n,s}:=t\bigl((1-s)u+su_n\bigr)\). Since \((u_n)\) is bounded in
\(L^r(\textbf{E})\), there exists \(R_t>0\), independent of \(n\) and
\(s\), such that
\[
\|w_{n,s}\|_{L^r(\textbf{E})}
\leq
t\bigl((1-s)\|u\|_{L^r(\textbf{E})}
+s\|u_n\|_{L^r(\textbf{E})}\bigr)
\leq R_t.
\]
By assumption~\ref{Bi2f_2}, the identity \((r-1)r'=r\), and the
finiteness of \(\lambda_g(M)\), there is a constant \(C_t>0\),
independent of \(n\) and \(s\), such that
\begin{eqnarray*}
\|f(w_{n,s})\|_{L^{r'}(\textbf{E})}^{r'} \leq  C\int_M
\bigl(1+|w_{n,s}|_{h_{\mathbf E}}^{r-1}\bigr)^{r'}\,d\lambda_g   \leq
C\left(\lambda_g(M)+\|w_{n,s}\|_{L^r(\textbf{E})}^r\right)
\leq
C\left(\lambda_g(M)+R_t^r\right),
\end{eqnarray*}
and consequently \(\|f(w_{n,s})\|_{L^{r'}(\textbf{E})}\leq C_t\).
In particular,
\[
\int_0^1\int_M
|f(w_{n,s})|_{h_{\mathbf E}}|u_n-u|_{h_{\mathbf E}}
\,d\lambda_g\,ds
\leq
C_t\|u_n-u\|_{L^r(\textbf{E})}<\infty,
\]
so Fubini's theorem is applicable. Using the preceding identity,
Fubini's theorem, and H\"older's inequality, we obtain
\begin{eqnarray*}
  \left|
\int_M\mathcal F_x(tu_n)\,d\lambda_g
-
\int_M\mathcal F_x(tu)\,d\lambda_g
\right|  \leq
t\|u_n-u\|_{L^r(\textbf{E})}
\int_0^1\|f(w_{n,s})\|_{L^{r'}(\textbf{E})}\,ds \leq
tC_t\|u_n-u\|_{L^r(\textbf{E})}
\to 0.
\end{eqnarray*}
This proves \eqref{eq:nonlinear_convergence_F_scaled}.
\end{proof}

\begin{proposition}\label{prop:energy_functional_C1}
The functional $I:X\to\mathbb R$ is well defined and belongs to
$C^1(X,\mathbb R)$. Moreover, for every $u,v\in X$,
\begin{equation}\label{energy_derivative}
\langle I'(u),v\rangle=
\int_M
a(|\nabla^\textbf{E} u|_{g\otimes h_\textbf{E}}^{p})
|\nabla^\textbf{E} u|_{g\otimes h_\textbf{E}}^{p-2}
\langle\nabla^\textbf{E} u,\nabla^\textbf{E} v\rangle_{g\otimes h_\textbf{E}}
\,d\lambda_g-
\int_M
\langle f_x(u(x)),v(x)\rangle_{h_\textbf{E}}
\,d\lambda_g.
\end{equation}
Consequently, the critical points of $I$ are precisely the weak
solutions of \eqref{ProblemP}.
\end{proposition}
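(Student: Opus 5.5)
We split $I=J_1-J_2$, where
\[
J_1(u):=\frac1p\int_M A\bigl(|\nabla^{\mathbf E}u|_{g\otimes h_{\mathbf E}}^p\bigr)\,d\lambda_g,
\qquad
J_2(u):=\int_M\mathcal F_x(u)\,d\lambda_g,
\]
and prove separately that each is well defined and of class $C^1$ on $X=W^{1,q}_0(\mathbf E)$, with the stated derivatives.

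\emph{The term $J_2$.} By Lemma~\ref{lem:fiberwise_growth_estimates} and Lemma~\ref{Embedding} (note $p\le q<r<q^*$), $J_2$ is finite on $X$. For Gateaux differentiability, fix $u,v\in X$ and use the mean value identity along the segment in each fiber, exactly as in the proof of Lemma~\ref{lem:nonlinear_convergence}:
\[
\frac{\mathcal F_x(u+sv)-\mathcal F_x(u)}{s}=\int_0^1\langle f_x(u+\tau sv),v\rangle_{h_{\mathbf E}}\,d\tau .
\]
The integrand converges pointwise as $s\to0$ by continuity of $f$, and it is dominated by $C(1+|u|^{r-1}+|v|^{r-1})|v|$, which is in $L^1$ by \ref{Bi2f_2} and Hölder. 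Hence the Gateaux derivative is $v\mapsto\int_M\langle f(u),v\rangle\,d\lambda_g$. For continuity of $u\mapsto J_2'(u)$ into $X'$, apply Lemma~\ref{lem:fiberwise_superposition_continuity} with $m=r$ to get $u_k\to u$ in $L^r\Rightarrow f(u_k)\to f(u)$ in $L^{r'}$. Then
\[
\|J_2'(u_k)-J_2'(u)\|_{X'}\le C\,\|f(u_k)-f(u)\|_{L^{r'}},
\]
using the continuous embedding $X\hookrightarrow L^r$. A continuous Gateaux derivative gives Fréchet $C^1$.

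\emph{The term $J_1$.} Set $\Phi(\eta):=\frac1pA(|\eta|^p)$ on fibers of $T^*M\otimes\mathbf E$. From \ref{Bi2a_1} we get $a(t^p)t^{p-1}\le k_3t^{p-1}+k_4t^{q-1}$. Hence
\[
0\le\Phi(\eta)\le C\bigl(|\eta|^p+|\eta|^q\bigr),
\qquad
\nabla_\eta\Phi(\eta)=a(|\eta|^p)|\eta|^{p-2}\eta,
\]
where the fiberwise gradient is computed by the chain rule. Since $A'=a$ is continuous, $\Phi$ is $C^1$ away from $0$. At $0$ it is also $C^1$, because $|\nabla\Phi(\eta)|\le C(|\eta|^{p-1}+|\eta|^{q-1})\to0$ (recall $p\ge2$). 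The gradient map $\mathscr G(\eta):=a(|\eta|^p)|\eta|^{p-2}\eta$ is continuous with growth $C(1+|\eta|^{q-1})$. Since $\nabla^{\mathbf E}$ is bounded linear from $X$ to $L^q(T^*M\otimes\mathbf E)$, it suffices to show that $\eta\mapsto\int_M\Phi(\eta)$ is $C^1$ on $L^q$. The Gateaux step repeats the dominated-convergence argument above with the bound $C(1+|\eta|^{q-1}+|\zeta|^{q-1})|\zeta|$. Continuity of the derivative follows from Lemma~\ref{lem:fiberwise_superposition_continuity} with $m=q$, $F=G=T^*M\otimes\mathbf E$. This yields \eqref{energy_derivative}.

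\emph{Critical points and the main difficulty.} The critical points of $I$ are then precisely the $u$ satisfying the weak formulation, by the definition of weak solution with test sections in $X$. The step needing the most care is the $C^1$ regularity of $\Phi$ at $\eta=0$, together with measurability of $\mathscr G(\nabla u)$. Both are settled by the growth bound and continuity of $a$, so I expect no real obstruction; the rest is routine Nemytskii-operator theory transplanted fiberwise.
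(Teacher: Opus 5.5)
Your proof is correct and follows essentially the same route as the paper: the split $I=J_1-J_2$, the fiberwise growth bounds, and Lemma~\ref{lem:fiberwise_superposition_continuity} (with $m=q$ for $J_1$ and $m=r$ for $J_2$) to get continuity of the derivatives into $X'$. The only differences are the following. You obtain Fréchet differentiability from a continuous Gateaux derivative, and you factor $J_1$ through the bounded map $\nabla^{\mathbf E}:X\to L^q$. The paper instead bounds the Fréchet remainder directly, using the fundamental theorem of calculus, Fubini, Hölder, and dominated convergence in the segment parameter. Your explicit check that $\Phi$ is $C^1$ at $\eta=0$ fills in a point the paper leaves implicit.
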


\begin{proof}
Write $I=J_1-J_2$, where

\begin{equation*}
\displaystyle J_1(u):=\frac1p\int_M A(|\nabla^\textbf{E} u|_{g\otimes h_\textbf{E}}^p)\,d\lambda_g    
\end{equation*}
and
\begin{equation*}
\displaystyle J_2(u):=\int_M\mathcal F_x(u(x))\,d\lambda_g
\end{equation*}

We first show that $I$ is well defined. By hypothesis \ref{Bi2a_1}, one has
$\displaystyle a(\rho^p)\leq k_3+k_4\rho^{q-p}$ for every $\rho>0$.
Therefore
\begin{equation*}
    \displaystyle A(\rho^p)=p\int_0^\rho a(\tau^p)\tau^{p-1}\,d\tau
\leq k_3\rho^p+\frac{pk_4}{q}\rho^q.
\end{equation*}
Since $\nabla^\textbf{E} u\in L^q(T^*M\otimes \textbf{E})$ and $p\leq q$, it follows that
$\displaystyle A(|\nabla^\textbf{E} u|_{g\otimes h_\textbf{E}}^p)\in L^1(M)$.
Moreover, Lemma~\ref{lem:fiberwise_growth_estimates} gives
$\displaystyle |\mathcal F_x(\xi)|\leq C(|\xi|_{h_\textbf{E}}^p+|\xi|_{h_\textbf{E}}^r)$.
Since $X\hookrightarrow L^p(\textbf{E})\cap L^r(\textbf{E})$ continuously, we also have
$\displaystyle \mathcal F_x(u(x))\in L^1(M)$. Hence $I$ is well defined.

We now prove that $J_1\in C^1(X,\mathbb R)$. Let $V:=T^*M\otimes \textbf{E}$ and
define $\Psi:V\to V$ fiberwise by
\[
\Psi_x(\eta):=
a(|\eta|_{g\otimes h_\textbf{E}}^p)
|\eta|_{g\otimes h_\textbf{E}}^{p-2}\eta,
\qquad
\Psi_x(0_x):=0_x.
\]
Also set
$\displaystyle \Phi_x(\eta):=\frac1p A(|\eta|_{g\otimes h_\textbf{E}}^p)$.
Then
$\displaystyle D\Phi_x(\eta)[\zeta]
=\langle\Psi_x(\eta),\zeta\rangle_{g\otimes h_\textbf{E}}$.
Furthermore, by \ref{Bi2a_1},
\[
|\Psi_x(\eta)|_{g\otimes h_\textbf{E}}
\leq
k_3|\eta|_{g\otimes h_\textbf{E}}^{p-1}
+k_4|\eta|_{g\otimes h_\textbf{E}}^{q-1}
\leq
C\bigl(1+|\eta|_{g\otimes h_\textbf{E}}^{q-1}\bigr),
\]
where in the last inequality we used $p\leq q$. Thus, by
Lemma~\ref{lem:fiberwise_superposition_continuity}, applied with $m=q$,
the superposition operator induced by $\Psi$ is continuous from $L^q(V)$
into $L^{q'}(V)$, where $\displaystyle q':=\frac{q}{q-1}$.

For $u\in X$, define $L_u\in X'$ by
$\displaystyle \langle L_u,v\rangle:=
\int_M\langle\Psi_x(\nabla^\textbf{E} u),\nabla^\textbf{E} v\rangle_{g\otimes h_\textbf{E}}
\,d\lambda_g$.
This is well defined and continuous, because
\[
|\langle L_u,v\rangle|
\leq
\|\Psi(\nabla^\textbf{E} u)\|_{L^{q'}(V)}
\|\nabla^\textbf{E} v\|_{L^q(V)}
\leq
C\|\Psi(\nabla^\textbf{E} u)\|_{L^{q'}(V)}\|v\|_X.
\]

Let $h\in X$. For almost every $x\in M$, applying the fundamental theorem
of calculus to
$s\mapsto \Phi_x(\nabla^\textbf{E} u(x)+s\nabla^\textbf{E} h(x))$ gives
\[
\Phi_x(\nabla^\textbf{E} u+\nabla^\textbf{E} h)-\Phi_x(\nabla^\textbf{E} u)
=
\int_0^1
\left\langle
\Psi_x(\nabla^\textbf{E} u+s\nabla^\textbf{E} h),
\nabla^\textbf{E} h
\right\rangle_{g\otimes h_\textbf{E}}
\,ds.
\]
Integrating this identity over $M$ and subtracting the identity
$\langle L_u,h\rangle=\displaystyle\int_M\int_0^1
\langle\Psi_x(\nabla^\textbf{E} u),\nabla^\textbf{E} h\rangle_{g\otimes h_\textbf{E}}
\,ds\,d\lambda_g$, we obtain
\[
\begin{aligned}
J_1(u+h)-J_1(u)-\langle L_u,h\rangle=
\int_M\int_0^1
\left\langle
\Psi_x(\nabla^\textbf{E} u+s\nabla^\textbf{E} h)-\Psi_x(\nabla^\textbf{E} u),
\nabla^\textbf{E} h
\right\rangle_{g\otimes h_\textbf{E}}
\,ds\,d\lambda_g.
\end{aligned}
\]
Applying Fubini's theorem and then Hölder's inequality, we obtain
\[
\begin{aligned}
|J_1(u+h)-J_1(u)-\langle L_u,h\rangle| &\leq
\|\nabla^\textbf{E} h\|_{L^q(V)}
\int_0^1
\|\Psi(\nabla^\textbf{E} u+s\nabla^\textbf{E} h)-\Psi(\nabla^\textbf{E} u)\|_{L^{q'}(V)}
\,ds\\
&\quad\leq
C\|h\|_X
\int_0^1
\|\Psi(\nabla^\textbf{E} u+s\nabla^\textbf{E} h)-\Psi(\nabla^\textbf{E} u)\|_{L^{q'}(V)}
\,ds.
\end{aligned}
\]
If $h_j\to0$ in $X$, then
$\displaystyle \nabla^\textbf{E} u+s\nabla^\textbf{E} h_j\to\nabla^\textbf{E} u$ in $L^q(V)$ for
every $s\in[0,1]$. By the continuity of the superposition operator
induced by $\Psi$,
\[
\Psi(\nabla^\textbf{E} u+s\nabla^\textbf{E} h_j)
\longrightarrow
\Psi(\nabla^\textbf{E} u)
\quad\text{in }L^{q'}(V)
\]
for every $s\in[0,1]$. Moreover,
\[
\|\Psi(\nabla^\textbf{E} u+s\nabla^\textbf{E} h_j)-\Psi(\nabla^\textbf{E} u)\|_{L^{q'}(V)}\leq
\|\Psi(\nabla^\textbf{E} u+s\nabla^\textbf{E} h_j)\|_{L^{q'}(V)}
+
\|\Psi(\nabla^\textbf{E} u)\|_{L^{q'}(V)}.
\]
Since $h_j\to0$ in $X$, the sequence
$\displaystyle \nabla^\textbf{E} u+s\nabla^\textbf{E} h_j$ is bounded in $L^q(V)$ uniformly
with respect to $s\in[0,1]$ and $j$. Using the growth estimate for
$\Psi$, the preceding right-hand side is bounded by a constant independent
of $s$ and $j$. Hence, by the dominated convergence theorem in the
variable $s$,
\[
\int_0^1
\|\Psi(\nabla^\textbf{E} u+s\nabla^\textbf{E} h_j)-\Psi(\nabla^\textbf{E} u)\|_{L^{q'}(V)}
\,ds
\longrightarrow0.
\]
Therefore
\[
\lim_{\|h\|_X\to0}
\frac{|J_1(u+h)-J_1(u)-\langle L_u,h\rangle|}{\|h\|_X}
=0.
\]
Hence $J_1$ is Fréchet differentiable at $u$ and $J_1'(u)=L_u$, that is,
\[
\langle J_1'(u),v\rangle
=
\int_M
a(|\nabla^\textbf{E} u|_{g\otimes h_\textbf{E}}^p)
|\nabla^\textbf{E} u|_{g\otimes h_\textbf{E}}^{p-2}
\langle\nabla^\textbf{E} u,\nabla^\textbf{E} v\rangle_{g\otimes h_\textbf{E}}
\,d\lambda_g.
\]

If $u_k\to u$ in $X$, then
$\nabla^\textbf{E} u_k\to\nabla^\textbf{E} u$ in $L^q(V)$. By the continuity of the
superposition operator induced by $\Psi$,
$\Psi(\nabla^\textbf{E} u_k)\to\Psi(\nabla^\textbf{E} u)$ in $L^{q'}(V)$. Therefore
\[
\|J_1'(u_k)-J_1'(u)\|_{X'}
\leq
C\|\Psi(\nabla^\textbf{E} u_k)-\Psi(\nabla^\textbf{E} u)\|_{L^{q'}(V)}
\longrightarrow0.
\]
Thus $J_1\in C^1(X,\mathbb R)$.

We now prove that $J_2\in C^1(X,\mathbb R)$. By
Lemma~\ref{lem:fiberwise_growth_estimates},
\[
|f_x(\xi)|_{h_\textbf{E}}
\leq
C\bigl(|\xi|_{h_\textbf{E}}^{p-1}+|\xi|_{h_\textbf{E}}^{r-1}\bigr)
\leq
C\bigl(1+|\xi|_{h_\textbf{E}}^{r-1}\bigr),
\]
where we used $p<r$. Hence, by
Lemma~\ref{lem:fiberwise_superposition_continuity}, applied with $m=r$,
the superposition operator induced by $f$ is continuous from $L^r(\textbf{E})$
into $L^{r'}(\textbf{E})$, where $\displaystyle r':=\frac{r}{r-1}$.

For $u\in X$, define $K_u\in X'$ by
$\displaystyle \langle K_u,v\rangle:=
\int_M\langle f_x(u(x)),v(x)\rangle_{h_\textbf{E}}\,d\lambda_g$.
Since $X\hookrightarrow L^r(\textbf{E})$ continuously, this functional is well
defined and
\[
|\langle K_u,v\rangle|
\leq
\|f(u)\|_{L^{r'}(\textbf{E})}\|v\|_{L^r(\textbf{E})}
\leq
C\|f(u)\|_{L^{r'}(\textbf{E})}\|v\|_X.
\]

Let $h\in X$. For almost every $x\in M$, applying the fundamental theorem
of calculus to $s\mapsto\mathcal F_x(u(x)+sh(x))$ gives
\[
\mathcal F_x(u+h)-\mathcal F_x(u)
=
\int_0^1
\langle f_x(u+sh),h\rangle_{h_\textbf{E}}
\,ds.
\]
Integrating in $M$ and subtracting
$\displaystyle \langle K_u,h\rangle
=\int_M\int_0^1\langle f_x(u),h\rangle_{h_\textbf{E}}\,ds\,d\lambda_g$, we obtain
\[
\begin{aligned}
J_2(u+h)-J_2(u)-\langle K_u,h\rangle=
\int_M\int_0^1
\langle f_x(u(x)+sh(x))-f_x(u(x)),h(x)\rangle_{h_\textbf{E}}
\,ds\,d\lambda_g.
\end{aligned}
\]
Therefore, by Fubini's theorem and Hölder's inequality and the embedding $X\hookrightarrow L^r(\textbf{E})$,
\[
\begin{aligned}
|J_2(u+h)-J_2(u)-\langle K_u,h\rangle|
&\quad\leq
\|h\|_{L^r(\textbf{E})}
\int_0^1
\|f(u+sh)-f(u)\|_{L^{r'}(\textbf{E})}
\,ds\\
&\quad\leq
C\|h\|_X
\int_0^1
\|f(u+sh)-f(u)\|_{L^{r'}(\textbf{E})}
\,ds.
\end{aligned}
\]
If $h_j\to0$ in $X$, then $u+sh_j\to u$ in $L^r(\textbf{E})$ for every
$s\in[0,1]$. By the continuity of the superposition operator induced by
$f$,
$\displaystyle f(u+sh_j)\to f(u)$ in $L^{r'}(\textbf{E})$ for every $s\in[0,1]$.
Moreover, since $h_j\to0$ in $X$, the sections $u+sh_j$ are bounded in
$L^r(\textbf{E})$ uniformly with respect to $s\in[0,1]$ and $j$. Using the growth
estimate for $f$, the quantities
$\displaystyle \|f(u+sh_j)-f(u)\|_{L^{r'}(\textbf{E})}$ are bounded by a constant
independent of $s$ and $j$. Hence, by the dominated convergence theorem
in the variable $s$,
\[
\int_0^1
\|f(u+sh_j)-f(u)\|_{L^{r'}(\textbf{E})}
\,ds
\longrightarrow0.
\]
Therefore
\[
\lim_{\|h\|_X\to0}
\frac{|J_2(u+h)-J_2(u)-\langle K_u,h\rangle|}{\|h\|_X}
=0.
\]
Thus $J_2$ is Fréchet differentiable at $u$ and $J_2'(u)=K_u$, that is,
\[
\langle J_2'(u),v\rangle
=
\int_M
\langle f_x(u(x)),v(x)\rangle_{h_\textbf{E}}
\,d\lambda_g.
\]

If $u_k\to u$ in $X$, then $u_k\to u$ in $L^r(\textbf{E})$. By the continuity of
the superposition operator induced by $f$, $f(u_k)\to f(u)$ in
$L^{r'}(\textbf{E})$. Therefore
\[
\|J_2'(u_k)-J_2'(u)\|_{X'}
\leq
C\|f(u_k)-f(u)\|_{L^{r'}(\textbf{E})}
\longrightarrow0.
\]
Thus $J_2\in C^1(X,\mathbb R)$.

It follows that $I=J_1-J_2\in C^1(X,\mathbb R)$, and
\eqref{energy_derivative} follows from the expressions for $J_1'$ and
$J_2'$. Finally, $I'(u)=0$ if and only if
\[
\int_M
a(|\nabla^\textbf{E} u|_{g\otimes h_\textbf{E}}^p)
|\nabla^\textbf{E} u|_{g\otimes h_\textbf{E}}^{p-2}
\langle\nabla^\textbf{E} u,\nabla^\textbf{E} v\rangle_{g\otimes h_\textbf{E}}
\,d\lambda_g
=
\int_M
\langle f_x(u(x)),v(x)\rangle_{h_\textbf{E}}
\,d\lambda_g
\]
for every $v\in X$, which is precisely the weak formulation of
\eqref{ProblemP}.
\end{proof}

\section*{Declarations}

\begin{itemize}

\item \textbf{Funding.}
No funding was received for this work.

\item \textbf{Conflict of interest.}
The authors declare that they have no conflict of interest.

\item \textbf{Ethics approval.}
Not applicable.

%\item \textbf{Consent for publication.}
%Not applicable.

\item \textbf{Data availability.}
Not applicable.

%\item \textbf{Materials availability.}
%Not applicable.

%\item \textbf{Code availability.}
%Not applicable.

\item \textbf{Author contributions.}
All authors contributed to the conception, mathematical development, and writing of this work. All authors participated in the revision of the manuscript, read it, and approved the final version.

\end{itemize}


\begin{thebibliography}{99}
\bibitem{SzulkinWethNehari}
A.~Szulkin and T.~Weth,
\textit{The Method of Nehari Manifold},
in D.~Y.~Gao and D.~Motreanu (eds.),
\textit{Handbook of Nonconvex Analysis and Applications},
International Press, Boston, 2010, pp.~597--632.


\bibitem{BehzadanHolst}
A.~Behzadan and M.~Holst,
\textit{Sobolev--Slobodeckij spaces on compact manifolds, revisited},
Mathematics \textbf{10} (2022), no.~3, Article~522.

\bibitem{VelazquezSandoval}
C.~D.~Velázquez-Mendoza and M.~A.~Sandoval-Romero,
\textit{Geometric Integration by Parts and Sobolev Spaces on Vector Bundles:
A Unified Global Approach},
\texttt{arXiv.2602.01016} Accepted at Nonlinear Differential Equations and Applications NoDEA. 2026.


\bibitem{Aubin}
T.~Aubin,
\textit{Some Nonlinear Problems in Riemannian Geometry},
Springer-Verlag, Berlin Heidelberg, 1998.


\bibitem{Hebey1}
E.~Hebey,
\textit{Nonlinear Analysis on Manifolds: Sobolev Spaces and Inequalities},
Courant Institute of Mathematical Sciences, New York, 1999.

\bibitem{BoossWojciechowski}
B.~Booß-Bavnbek and K.~P.~Wojciechowski,
\textit{Elliptic Boundary Problems for Dirac Operators},
Mathematics: Theory \& Applications,
Birkhäuser, Boston, 1993.

\bibitem{Guneysu}
B.~Güneysu,
\textit{Covariant Schrödinger Semigroups on Riemannian Manifolds},
Operator Theory: Advances and Applications, vol.~264,
Birkhäuser, Cham, 2017.

%\bibitem{AlvesFigueiredo} C. O. Alves and G. M. Figueiredo, {\it Multiplicity and Concentration of Positive Solutions for a Class of Quasilinear Problems}, Adv. Nonlinear Stud. {11} (2011), 265-294.


\bibitem{AlvesFigueiredo}
C.~O.~Alves and G.~M.~Figueiredo,
\textit{Multiplicity and concentration of positive solutions for a class of quasilinear problems}, Adv. Nonlinear Stud. \textbf{11} (2011), no.~2, 265--294.



\bibitem{NolderRyan}
C.~A.~Nolder and J.~Ryan,
\textit{\(p\)-Dirac operators},
Adv. Appl. Clifford Algebr. \textbf{19} (2009), no.~2, 391--402.

\bibitem{KohrNistor}
M.~Kohr and V.~Nistor,
\textit{Sobolev spaces and \(\nabla\)-differential operators on manifolds I:
Basic properties and weighted spaces},
Ann. Global Anal. Geom. \textbf{61} (2022), no.~4, 721--758.


\bibitem{GrosseSchneider}
N.~Große and C.~Schneider,
\textit{Sobolev spaces on Riemannian manifolds with bounded geometry:
General coordinates and traces},
Math. Nachr. \textbf{286} (2013), no.~16, 1586--1613.

\bibitem{VanSchaftingenWinter}
J.~Van Schaftingen and L.~Winter,
\textit{Trace theory for gauge-covariant Sobolev spaces},
J. Math. Anal. Appl. \textbf{551} (2025), no.~2,
Article~129697.

\bibitem{Leoni}
G.~Leoni,
\textit{A First Course in Sobolev Spaces},
2nd ed., Graduate Studies in Mathematics, vol.~181,
American Mathematical Society, Providence, RI, 2017.

\bibitem{Lieberman1991}
G. M. Lieberman,
{\it The Natural Generalization of the Natural Conditions of Ladyzhenskaya and Ural'tseva for Elliptic Equations},
Comm. Partial Differential Equations {\bf 16} (1991), no. 2--3, 311--361.


\bibitem{Costa2023}
G.~S.~A. Costa,
\newblock Existence of solutions for a class of quasilinear equations with vanishing potentials,
\newblock {\em Applicable Analysis}, 102 (2023), no.~11, 3148--3166.

\bibitem{CostaFigueiredo2023}
G.~S. Costa and G.~M. Figueiredo,
\newblock Existence of positive solutions for $p$ \& $q$ equations involving vanishing potentials with exponential decay,
\newblock {\em Differential and Integral Equations}, 36 (2023), 859--876.


\bibitem{CostaFigueiredoJunior2023}
G.~S.~A. Costa, G.~M. Figueiredo and J.~C.~O. Junior,
\newblock Solutions for a quasilinear elliptic problem with indefinite nonlinearity with critical growth,
\newblock {\em Electronic Journal of Qualitative Theory of Differential Equations},
2023, No.~24, 1--19.


\bibitem{Seto}
S.~Seto,
\textit{The first nonzero eigenvalue of the \(p\)-Laplacian on
differential forms},
Pacific J. Math. \textbf{309} (2020), no.~1, 213--222.


\bibitem{MaranoMosconi2018}
S. A. Marano and S. J. N. Mosconi,
{\it Some Recent Results on the Dirichlet Problem for $(p,q)$-Laplace Equations}, Discrete Contin. Dyn. Syst. Ser. S {\bf 11} (2018), no. 2, 279--291.


	

\bibitem{GiustiPMC}
E.~Giusti,
\textit{Boundary value problems for non-parametric surfaces of
prescribed mean curvature},
Ann. Scuola Norm. Sup. Pisa Cl. Sci. (4) \textbf{3} (1976),
no.~3, 501--548.


\bibitem{ambrep}
V.~Ambrosio and D.~D.~Repovš,
\textit{Multiplicity and concentration results for a
\((p,q)\)-Laplacian problem in \(\mathbb R^N\)},
Z. Angew. Math. Phys. \textbf{72} (2021), no.~1,
Paper No.~33, 33~pp.

\bibitem{chil}
L.~Cherfils and Y.~Il'yasov,
\textit{On the stationary solutions of generalized reaction--diffusion
equations with \(p\&q\)-Laplacian},
Commun. Pure Appl. Anal. \textbf{4} (2005), no.~1, 9--22.

\bibitem{heli}
C.~He and G.~Li,
\textit{The regularity of weak solutions to nonlinear scalar field
elliptic equations containing \(p\&q\)-Laplacians},
Ann. Acad. Sci. Fenn. Math. \textbf{33} (2008), 337--371.

\bibitem{TavaresCarlosSalirrosas2025}
L.~Tavares, R.~D.~Carlos, and S.~M.~A.~Salirrosas,
\textit{Existence and multiplicity of solutions for \(p\&q\)
Kirchhoff--Boussinesq equations},
J. Geom. Anal. \textbf{35} (2025), Article~258.



\bibitem{Marcellini1991}
P. Marcellini,
{\it Regularity and Existence of Solutions of Elliptic Equations with $(p,q)$-Growth Conditions},
J. Differential Equations {\bf 90} (1991), 1--30.




\end{thebibliography}
\end{document}